\date{\today}
\newcommand\blfootnote[1]{%
\begingroup
\renewcommand\thefootnote{}\footnote{#1}%
\addtocounter{footnote}{-1}%
\endgroup
}
  \newcommand{\Addresses}{{
  \bigskip
  \footnotesize

\textsc{CNRS, IMJ-PRG, Sorbonne Universit\'{e}, 4 place Jussieu, 75005 Paris, France}\par\nopagebreak
  \textit{E-mail address}, G.~Baldi: \texttt{baldi@imj-prg.fr},  \texttt{baldi@ihes.fr}  

  \medskip

  \textsc{University College London, Department of Mathematics, Gower street, WC1E 6BT, London, UK}\par\nopagebreak
   \textit{E-mail address}, R. Richard: \texttt{rodolphe.richard@normalesup.org}

   \medskip

\textsc{I.H.E.S., Université Paris-Saclay, CNRS, Laboratoire Alexandre Grothendieck. 35 Route de Chartres, 91440 Bures-sur-Yvette (France)}\par\nopagebreak
  \textit{E-mail address}, E. Ullmo: \texttt{ullmo@ihes.fr}

}}
\DeclarePairedDelimiter{\gen}{\langle}{\rangle} 
\newcommand{\eps}{{\varepsilon}}
\newcommand{\Gcal}{\mathcal{G}}
\newcommand{\etale}{\operatorname{\acute{e}t}}
\newcommand{\tens}{\mathbin{\otimes}}
\newcommand{\abs}[1]{\left\lvert{#1}\right\rvert}
\newcommand{\Witt}{\operatorname{W}}
\newcommand{\Wittk}{{}{\Witt(k)}{}}
\newcommand{\Wittp}{{}{\Witt(\FF_p)}{}}
\newcommand{\Frob}{\operatorname{Frob}}
\newcommand{\WittF}{\operatorname{F}_{\Wittk}}
\newcommand{\WittV}{\operatorname{V}_{\Wittk}}
\newcommand{\Dieukfull}{\Wittk[\WittF,\WittV]}
\newcommand{\Lbar}{\overline{L}}
\newcommand{\Acal}{\mathcal{A}}
\newcommand{\FrobAk}{\operatorname{Frob}_{A_k}}
\newcommand{\QQbar}{\overline{\mathbb{Q}}}
\newcommand{\CoHcrys}{{\operatorname H}^1_{\crys}}
\newcommand{\CoHet}{{\operatorname H}^1_{\etale}}
\newcommand{\Hcrys}{{\operatorname H}_1^{\crys}}
\newcommand{\Het}{{\operatorname H}_1^{\etale}}
\newcommand{\reduction}{\operatorname{red}}
\theoremstyle{plain}
\newtheorem{thm}{Theorem}[section]
\newtheorem{lemma}[thm]{Lemma}
\newtheorem{ex}[thm]{Example}
\newtheorem{prop}[thm]{Proposition}
\newtheorem{cor}[thm]{Corollary}
\theoremstyle{definition}
\newtheorem{rmk}[thm]{Remark}
\theoremstyle{remark}
\numberwithin{equation}{section}
\newcommand{\FF}{\mathbb{F}}
\newcommand{\Zar}{\operatorname{Zar}}
\newcommand{\Ators}{A_{\operatorname{tors}}}
\newcommand{\kbar}{\overline{k}}
\newcommand{\wt}[1]{\widetilde{#1}}
\DeclareMathOperator{\End}{End}
\DeclareMathOperator{\Hom}{Hom}
\DeclareMathOperator{\Gal}{Gal}
\DeclareMathOperator{\Spec}{Spec}
\DeclareMathOperator{\im}{Im}
\DeclareMathOperator{\Gl}{GL}
\newcommand{\crys}{\operatorname{crys}}
\newcommand{\Stab}{\operatorname{Stab}}
\newcommand{\N}{\mathbb{N}}
\newcommand{\Z}{\mathbb{Z}}
\newcommand{\Q}{\mathbb{Q}}
\newcommand{\Qp}{\mathbb{Q}_{p}}
\newcommand{\R}{\mathbb{R}}
\newcommand{\A}{\mathbb{A}}
\newcommand{\Oo}{\mathcal{O}}
\newcommand{\C}{\mathbb{C}}
\newcommand{\Qbar}{\overline{\mathbb{Q}}}
\newcommand{\Ff}{\mathbb{F}}
\newcommand{\ord}{\mathrm{ord}}
\begin{document}

\newcommand{\adjunction}[4]{\xymatrix@1{#1{\ } \ar@<-0.3ex>[r]_{ {\scriptstyle #2}} & {\ } #3 \ar@<-0.3ex>[l]_{ {\scriptstyle #4}}}}

\title{Manin--Mumford in arithmetic pencils}\blfootnote{\emph{2020 Mathematics Subject Classification}. 11G10, 11G35, 14K12.}\blfootnote{\emph{Key words and phrases}. Unlikely intersections, subschemes of abelian schemes, reduction modulo a prime, Galois orbits of torsion points, lifting abelian subvarieties.}
\author{Gregorio Baldi, Rodolphe Richard, and Emmanuel Ullmo}

\begin{abstract}
We present a refinement of Manin--Mumford (Raynaud's Theorem) for abelian schemes over some ring of integers. Torsion points are replaced by  \emph{special 0-cycles}, that is reductions modulo some prime (possibly varying) of Galois orbits of torsion points. There is a flat/horizontal part and a vertical one. The irreducible components of the flat part are given by the Zariski closure, over the integers, of torsion cosets of the generic fibre of the abelian scheme. The vertical components are given by translates of abelian subvarieties, which ``come from characteristic zero''. 
\end{abstract}

\maketitle

\tableofcontents

\section{Introduction}
\subsection{Motivation}
The following is often referred to as the \emph{Manin--Mumford conjecture}, stated in \cite{langdiv}. It was first proved by Raynaud \cite{raynaud1, raynaud2}, reproved then by Hindry \cite{hindry} and more recently by Pila--Zannier \cite{pilazannier}, among others.
\begin{thm}[Raynaud]\label{mm}
Let $A$ be an abelian variety over a field of $k$ characteristic $0$ and $E$ be a subset of torsion points of $A(\overline{k})$. Then the Zariski closure of $E$ is \emph{special}: a finite union of torsion cosets $a+B$ for some abelian subvariety $B\subseteq A_{\overline{k}}$  and a torsion point $a\in A(\overline{k})$.
\end{thm}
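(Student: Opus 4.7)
The plan is to follow the Pila--Zannier approach, combining o-minimality with a polynomial lower bound on the Galois orbits of torsion points. After replacing $E$ by its Zariski closure and passing to an irreducible component, I may assume $E$ is Zariski-dense in an irreducible subvariety $X \subseteq A$, and it suffices to prove that $X$ is itself a torsion coset. A spreading-out argument then reduces to the case where $A$ and $X$ are defined over a number field $K \subseteq \CC$.

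Fix a complex uniformization $\pi \colon \CC^g \to A(\CC) = \CC^g / \Lambda$ and a fundamental parallelotope $\mathcal{F}$, which I identify with $[0,1)^{2g}$ via a $\ZZ$-basis of $\Lambda$. Then $Z := \pi^{-1}(X) \cap \mathcal{F}$ is definable in the o-minimal structure $\Ranexp$, and torsion points of $X$ of order dividing $N$ correspond bijectively to rational points of $Z$ whose coordinates have denominator dividing $N$, hence of height $\leq N$. The key arithmetic input is a uniform polynomial lower bound $[K(\zeta):K] \gg N^{\delta}$ for some $\delta > 0$, valid for any torsion point $\zeta \in A(\overline{K})$ of exact order $N$; this rests on Serre's open image theorem and its refinements due to Masser and W\"ustholz. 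Combined with $\Gal(\overline{K}/K)$-invariance of $X$, it forces $Z$ to contain $\gg N^\delta$ rational points of height $\leq N$.

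On the transcendence side, the Pila--Wilkie counting theorem bounds the number of rational points of height $\leq N$ in the transcendental part of $Z$ by $O_\eps(N^\eps)$, so for $N$ large a positive proportion of the rational points above must lie on positive-dimensional connected semi-algebraic blocks $W \subseteq Z$. Ax--Lindemann for abelian varieties then shows that the Zariski closure of $\pi(W)$ in $A$ is a coset of an abelian subvariety $B$ of positive dimension, contained in $X$; since $W$ contains torsion points by construction, the coset is actually a torsion coset. There are only finitely many such maximal cosets, and if none of them equals $X$ they collectively sit inside a proper closed subset $X' \subsetneq X$; combined with the $O(N^\eps)$ exceptional torsion points, this contradicts the Zariski-density of torsion in $X$ for $N \gg 0$, forcing $X$ itself to be a torsion coset.

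The main obstacle in this strategy is the polynomial lower bound on the Galois orbits of torsion. The o-minimal inputs (Pila--Wilkie and Ax--Lindemann for abelian varieties) are by now geometric black boxes whose use is largely formal once the definability of $Z$ in $\Ranexp$ is set up, whereas the Galois estimate carries the deep arithmetic content and relies on Serre's analysis of the image of Galois on the Tate module of $A$, together with isogeny and period estimates to make the bound uniform in $N$.
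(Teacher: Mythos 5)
Your proposal follows the Pila--Zannier route, which is genuinely different from what this paper does: Theorem \ref{mm} is not reproved here via o-minimality, but is recovered (see the Remark following Theorem \ref{pureppart}) from the arithmetic refinements, whose proofs follow Hindry --- the Bogomolov--Serre homothety theorem (Theorem \ref{lang}), degree comparisons under multiplication maps (Lemma \ref{lemmahin}, Proposition \ref{Hindry Lemma Criterion}), and a descent to a minimal torsion coset (Proposition \ref{keyprop}). The trade-off is that Hindry's method is effective (Section \ref{effectivesection}) and makes sense in any characteristic, which is exactly what the paper exploits mod $\mathfrak{p}$; your method uses Pila--Wilkie, abelian Ax--Lindemann and a Masser-type polynomial lower bound on Galois orbits of torsion (this is Masser's theorem rather than Serre's open image theorem, which concerns elliptic curves), and is tied to an archimedean uniformization.

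There is, however, a genuine gap in your endgame. The assertion that ``there are only finitely many such maximal cosets, and if none of them equals $X$ they collectively sit inside a proper closed subset $X'\subsetneq X$'' is unjustified, and false as stated when the stabiliser of $X$ is positive dimensional. Take $X=C\times B\subseteq A_1\times B$ with $C$ a curve of genus at least $2$ and $B$ an abelian variety: $X$ is not a coset, yet the positive-dimensional cosets contained in $X$ sweep out all of $X$, and the maximal torsion cosets $\{c\}+B$, for $c$ a torsion point of $C$ lying on the curve, are as numerous as $C\cap A_{1,\operatorname{tors}}$ --- whose finiteness is an instance of the very theorem you are proving, so it cannot be assumed. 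The standard repair is an induction on dimension: if $\operatorname{Stab}(X)$ is positive dimensional, pass to $A/\operatorname{Stab}(X)$, where the image of your dense set of torsion points is again a dense set of torsion points; if $\operatorname{Stab}(X)$ is finite, one must invoke the Kawamata--Ueno structure theorem (the union of all positive-dimensional cosets contained in $X$ is then a \emph{proper} Zariski-closed subset), or, as in Pila--Zannier, use the fact that the Pila--Wilkie blocks come in finitely many definable families to bound the abelian subvarieties $B$ that can occur and then induct on the closed sets $\{x\in X: x+B\subseteq X\}$. Without one of these ingredients the contradiction with Zariski density of torsion in $X$ does not follow. The rest of your sketch (definability of $Z$, the height--order dictionary, the Galois lower bound, Pila--Wilkie, Ax--Lindemann, and the observation that a coset through a torsion point is a torsion coset) is sound.
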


Theorem \ref{mm}, together with its multiplicative analogue (originally appeared in \cite{langdiv}), can be thought as the first instance of a more general philosophy, now referred to as \emph{Unlikely Intersections}. Recently Richard \cite{richard} has introduced a new net of conjectures aiming to understand unlikely intersections phenomena \emph{over the integers}. This includes new problems regarding the arithmetic of (models of) both abelian and Shimura varieties. The role of torsion points is replaced by the reduction to some residue field of positive characteristic of the Zariski closure, over the integers, of either a torsion or a CM-point. In an arithmetic family there are two perspectives:
\begin{itemize}
\item \emph{Horizontal} unlikely intersections. This case corresponds to an intersection with infinitely many vertical fibres;
\item \emph{Vertical} unlikely intersections. When the vertical fibres form a finite set. 
\end{itemize}
Even formulating the vertical part for Shimura varieties is challenging. Consider $Y_0(1)/\Z\cong\mathbb{A}^1_\Z$, the modular curve parametrising elliptic curves. Under the generalised Riemann hypothesis for quadratic inmaginary fields, Edixhoven and Richard \cite{basrod} proved the vertical André--Oort for $Y_0(1)\times Y_0(1)/\Z$ and later, unconditionally, Richard \cite{richard} proved the horizontal André--Oort for $Y_0(1)/\Z$. The above conjectures contain, as special cases, new statements for subschemes of abelian schemes over some ring of integers that can be thought as an arithmetic counterpart of Theorem \ref{mm}. The aim of this paper is to prove both a horizontal and vertical Manin--Mumford, as we explain in the next section. 

\subsection{Main results}\label{mainresultsec}
Let $K$ be a number field (with a fixed algebraic closure $\overline{K}$), $S$ a finite set of finite places of $K$ and $\Oo_{K,S}$ be the ring of $S$-integers of $K$. Write $\Gal (\overline{K}/K)$ for the absolute Galois group of $K$. Let $\mathcal{A}$ be an abelian scheme over (the spectrum of) $\Oo_{K,S}$. We denote by $A=A_K$ its generic fibre, and by $\mathcal{A}_\mathfrak{p}$ its special fibre at a closed point $\mathfrak{p}\in \Spec (\Oo_{K,S})$. The latter is an abelian variety over the residue field $\kappa(\mathfrak{p})$. In $\mathcal{A}$ we consider \emph{special 0-cycles}, that is the reduction to $\kappa(\mathfrak{p})$, of positive characteristic, of the Zariski closure in $\mathcal{A}$ of a torsion point $a\in A(\overline{K})$, for some prime ideal $\mathfrak{p} \in \Spec (\Oo_{K,S})$. Our main result describes the Zariski closure of a set of special 0-cycles in $\mathcal{A}$. To do so we introduce two classes of subschemes of $\mathcal{A}$:
\begin{itemize}
\item \emph{Flat cycles}. Those obtained as the Zariski closure in $\mathcal{A}$ of special subvarieties of $A_{\overline{K}}$ (in the sense of Theorem \ref{mm}), and
\item \emph{Torsion cycles}. The translates of an abelian subvariety of some vertical fibre $\mathcal{A}_{\mathfrak{p}}$ by a $\overline{\kappa(\mathfrak{p})}$-point of $\mathcal{A}$.
\end{itemize}
Intersections between flat cycles and vertical fibres are particular cases of finite unions of torsion cycles, but in general torsion cycles are given by an abelian subvariety of $\mathcal{A}_\mathfrak{p}$ which is not, a priori, coming from a flat subvariety of $\mathcal{A}$. A closed subscheme of $\mathcal{A}$ is called \emph{special} if it can be written as a finite union of flat and torsion cycles. 
\begin{thm}\label{mainthm}
Let $\mathcal{V}\subset \mathcal{A}$ be a closed subscheme. If $\mathcal{V}$ contains a Zariski dense sequence $(E_n)_{n\geq 0}$ of special 0-cycles then $\mathcal{V}$ is special. 
\end{thm}
It is natural to try to understand when the torsion cycles given by Theorem \ref{mainthm} are intersections between flat cycles and vertical fibres, but there is a \emph{$p$-adic obstruction} to this, as the next example shows.
\begin{ex}\label{anexample}
Let $A$ be a geometrically simple abelian $K$-surface, and $\mathfrak{p}$ a prime of residue characteristic $p>0$, such that $\mathcal{A}_\mathfrak{p}$ is isogenous to $\mathfrak{B} \times \mathfrak{B}'/\kappa(\mathfrak{p})$, with $\mathfrak{B}$ ordinary and $ \mathfrak{B}'$ super-singular, that is $\mathfrak{B}'(\overline{\kappa(\mathfrak{p}}))[p^\infty]\cong 0$. Let $(a_n)_n \subset A(\overline{K})$ be a sequence of points such that $\ord (a_n)=p^n$. Since $ \mathfrak{B}'[p^\infty]=0$, the reduction to $\overline{\kappa(\mathfrak{p})}$ of the $\Gal(\overline{K}/K)$-orbit of the $a_n$'s is contained in $\mathfrak{B}\times\{0_{\mathfrak{B}'}\}$. We can choose the $a_n$'s in such a way that the Zariski closure of $\bigcup_n E_n$ is $ \mathfrak{B}\times \{0_{\mathfrak{B}'}\} \subset \mathcal{A}_\mathfrak{p}$ (as predicted by Theorem \ref{mainthm}), but $ \mathfrak{B}\times \{0_{\mathfrak{B}'}\}$ can not come from characteristic zero, since $A_{\overline{K}}$ is simple.
\end{ex}

Our second main result considers only special 0-cycles of \emph{order coprime to the residue characteristics}, that is cycles $E$ associated to a torsion point $a$ and residue field $\kappa(\mathfrak{p})$ such that $\ord (a)$ is coprime with $\operatorname{char}(\kappa(\mathfrak{p}))$.
\begin{thm}\label{mainthmlift}
If $(E_n)_{n\geq 0}$ is a sequence of special 0-cycles in $\mathcal{A}$ of order coprime with the residue characteristic, then the Zariski closure of $\bigcup_{n\geq 0} E_n $ is a finite union of flat cycles and intersections between flat cycles and vertical fibres.
\end{thm}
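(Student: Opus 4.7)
The plan is to build on Theorem \ref{mainthm}: applying it first, we already know that $Z := \overline{\bigcup_{n \geq 0} E_n}$ is a finite union of flat cycles $F_1, \dots, F_r$ together with torsion cycles $T_1, \dots, T_s$, where each $T_k = t_k + B_k$ sits in some vertical fibre $\mathcal{A}_{\mathfrak{p}_k}$. Intersections of flat cycles with vertical fibres are themselves finite unions of torsion cycles (as noted after Theorem \ref{mainthm}), so it suffices to show that under the prime-to-$p$ hypothesis each $T_k$ appears as a union of irreducible components of $\mathcal{W} \cap \mathcal{A}_{\mathfrak{p}_k}$ for some flat cycle $\mathcal{W}$. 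This is precisely what the $p$-adic obstruction of Example \ref{anexample} forbids in the general case, and must therefore come from the genuinely coprime assumption.

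The key input is that $\mathcal{A}[N] \to \Spec \mathcal{O}_{K,S}$ is finite étale whenever $N$ is coprime to every residue characteristic at play, so the reduction map $A(\overline{K})[N] \to \mathcal{A}_{\mathfrak{p}}(\overline{\kappa(\mathfrak{p})})[N]$ is a $\Gal(\overline{K}/K)$-equivariant bijection for every such $\mathfrak{p}$. For each index $n$ I would let $\tilde{a}_n \in A(\overline{K})$ denote the canonical lift of $a_n$, so that $\Gal(\overline{K}/K) \cdot \tilde{a}_n$ maps bijectively and $\Gal$-equivariantly onto $E_n$.

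For a fixed torsion cycle $T_k$, let $J_k \subset \mathbb{N}$ be the set of indices $n$ with $E_n \subset \mathcal{A}_{\mathfrak{p}_k}$ whose Zariski closure accumulates in $T_k$. Apply Theorem \ref{mm} (classical Manin--Mumford over $\overline{K}$) to $\bigcup_{n \in J_k} \Gal(\overline{K}/K) \cdot \tilde{a}_n$ in $A_{\overline{K}}$, obtaining a special subvariety $\tilde{W}_k = \bigcup_i (\tilde{t}_{k,i} + \tilde{B}_{k,i})$. Let $\mathcal{W}_k \subset \mathcal{A}$ be its schematic closure over $\mathcal{O}_{K,S}$: this is by definition a flat cycle. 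Since each $\tilde{B}_{k,i}$ is proper and smooth, its flat closure is an abelian subscheme of $\mathcal{A}$ whose fibre at $\mathfrak{p}_k$ is an abelian subvariety; similarly each $\tilde{t}_{k,i}$, being prime-to-$p_k$ torsion, spreads to a unique integral section specialising to a torsion point. Hence $\mathcal{W}_k \cap \mathcal{A}_{\mathfrak{p}_k}$ is a finite union of torsion cosets containing every reduction $E_n$ for $n \in J_k$, and therefore containing $T_k$. The opposite inclusion (that $T_k$ is already a union of components of $\mathcal{W}_k \cap \mathcal{A}_{\mathfrak{p}_k}$ rather than sitting inside something strictly larger) follows from the $\Gal$-equivariance of the lift together with the minimality built into the Manin--Mumford decomposition of $\tilde{W}_k$.

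The hard part will be this last matching step: identifying each irreducible component of $T_k$ with a unique reduction $\overline{\tilde{t}_{k,i}} + \overline{\tilde{B}_{k,i}}$, keeping track of Galois orbits and of the Frobenius action on both sides of the reduction map. This uses crucially that the inertia $I_{\mathfrak{p}_k}$ acts trivially on prime-to-$p_k$ torsion (good reduction over $\mathcal{O}_{K,S}$), so that Frobenius at $\mathfrak{p}_k$ is the quotient of the decomposition subgroup that controls orbit sizes on both sides identically. A secondary issue is that infinitely many primes $\mathfrak{p}_k$ could in principle appear, but Theorem \ref{mainthm} already forces only finitely many torsion cycles, hence only finitely many $(\mathcal{W}_k, \mathfrak{p}_k)$ need to be assembled together with the flat cycles $F_j$ to realise $Z$ in the desired form.
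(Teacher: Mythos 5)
Your reduction to the fixed-prime case and the overall shape (apply Theorem \ref{mainthm}, then try to realise each torsion cycle $T_k$ as components of $\mathcal{W}_k\cap\mathcal{A}_{\mathfrak{p}_k}$ for the flat closure $\mathcal{W}_k$ of the characteristic-zero Manin--Mumford locus of the relevant orbits) matches the skeleton of the paper's argument, and the easy inclusion $T_k\subseteq \mathcal{W}_k\cap\mathcal{A}_{\mathfrak{p}_k}$ is fine. But the step you defer as ``the hard part'' --- showing that $T_k$ is a \emph{union of components} of $\mathcal{W}_k\cap\mathcal{A}_{\mathfrak{p}_k}$, i.e.\ that the Zariski closure of the reduced orbits does not have strictly smaller dimension than the special fibre of $\mathcal{W}_k$ --- is the entire mathematical content of the theorem, and the tools you name (the reduction map being a Galois-equivariant bijection on prime-to-$p$ torsion, triviality of inertia, Frobenius bookkeeping, ``minimality'' of the Manin--Mumford decomposition upstairs) cannot deliver it. Injectivity of reduction on torsion preserves cardinalities of orbits, but it does not prevent the reduced orbits from accumulating on a proper abelian subvariety $\mathfrak{B}\subsetneq\mathcal{A}_{\mathfrak{p}_k}$ that does \emph{not} come from characteristic zero: special fibres typically acquire extra endomorphisms and abelian subvarieties (e.g.\ two non-isogenous elliptic curves over $\overline{K}$ whose reductions become isogenous, so that the reduced $\ell$-power torsion orbits could a priori concentrate on the graph of a mod-$\mathfrak{p}$ isogeny), and Zariski density of the orbits upstairs is far weaker than any statement about their image filling $V_\ell(\mathcal{A}_{\mathfrak{p}_k})$. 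The prime-to-$p$ hypothesis only removes the $p$-divisible-group collapse of Example \ref{anexample}; it does not by itself rule out this finer collapse.

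Excluding it is exactly where the paper has to do real work: a positive proportion of each Galois orbit lands in $\mathfrak{B}$, one passes to a limit in the ($\ell$-adic or ``adelic'') Tate module to produce a Galois-invariant subspace $W\subseteq V_\ell(\mathfrak{B})$ (using the combinatorial Proposition \ref{Lemme inclusion exclusion} and, for varying primes $\ell_n$, the Masser--W\"ustholz finite-level Theorem \ref{mwthm}), and then invokes Faltings' theorems (semisimplicity of the Tate module and the Tate conjecture, Theorems \ref{tateconj1} and \ref{etate}) together with the idempotent-lifting Lemma \ref{keylemma} to produce an abelian subvariety of $A$ over $K$ squeezed between $W$ and $V_\ell(\mathfrak{B})$; this is packaged as Proposition \ref{prop222} and contradicts the maximality of the lifted subvariety $D_i$. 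None of this is formal: the statement you are asserting via ``Galois-equivariance plus minimality'' is essentially equivalent to this Tate-conjecture input, so as written your proposal has a genuine gap at its decisive step. (A minor additional point: the translation points $\tilde{t}_{k,i}$ of the upstairs special subvariety must be argued to have prime-to-$p_k$ order --- this does follow since each coset contains some $a_n$ --- but that is cosmetic compared with the main issue.)
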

Each special $0$-cycle $E_n \subset \mathcal{A}$, set theoretically, can be written as the image of $\Gal(\overline{K}/K) \cdot a_n$, for a family of points $a_n\in A_{\text{tors}}(\overline{K})$ along a\footnote{Even if there is no canonical map $\pi _{\mathfrak{p}_n}: A(\overline{K})\to \mathcal{A}(	\overline{\kappa(\mathfrak{p}_n)})$, the image of a Galois orbit in $ \mathcal{A}(	\overline{\kappa(\mathfrak{p}_n)})$ is well defined (i.e. does not depend on a particular choice of $\pi_{\mathfrak{p}_n}$). Indeed it is just the intersection between the vertical fibre $\mathcal{A}_\mathfrak{p}$ and the Zariski closure over $\Oo_{K,S}$ of $a_n$. We remark here that, when restricted to the prime-to-$\operatorname{char} \kappa(\mathfrak{p})$ part of the torsion, $\pi _{\mathfrak{p}}$ is the isomorphism identifying $\pi_1^{et}(\Oo_{K,S})$-equivariantly two geometric fibres of a locally constant constructible sheaf.} map
\begin{displaymath}
\pi_{\mathfrak{p}_n}: A(\overline{K})\to \mathcal{A} (	\overline{\kappa(\mathfrak{p}_n)})
\end{displaymath}
to a family of fields of positive characteristic $\kappa(\mathfrak{p}_n)$. Notice that the result is purely \emph{vertical} when the set $\{\mathfrak{p}_n\}_{n \geq 0}$ is finite. It may be helpful to give a reformulation of Theorem \ref{mainthmlift}, assuming that all special $0$-cycles lie in a fixed vertical fibre $\mathcal{A}_\mathfrak{p}$:
\begin{thm}\label{lastthm}
Fix a prime $\mathfrak{p}\subset \Oo_{K,S}$ of residue characteristic $p$. Let $\mathcal{A}$ be an abelian scheme over $\Oo_{K,S}$, $(a_n)_n\subset A_{\operatorname{tors}}$ be a sequence of torsion points of order coprime with $p$, and write $E_n=\pi_{\mathfrak{p}}(\Gal(\overline{K}/K) \cdot a_n)$, for a map $
\pi_{\mathfrak{p}}: A(\overline{K})\to \mathcal{A}_\mathfrak{p}(\overline{\kappa(\mathfrak{p})}).$ 
Then every component of $\mathcal{V}$, the Zariski closure of 
\begin{displaymath}
\bigcup_{n\geq 0} E_n\subset \mathcal{A}_\mathfrak{p}\subset \mathcal{A},
\end{displaymath}
is a component of the intersection between a flat cycle $\mathcal{B}\subset \mathcal{A}$ and $\mathcal{A}_\mathfrak{p}$.
\end{thm} 
An interesting and simple corollary of Theorem \ref{lastthm} is the following.
\begin{cor}\label{cor 1.2.5}
With the notations and the assumptions of Theorem \ref{lastthm}, if the sequence of subsets $(\Gal (\overline{K}/K)\cdot a_n)_n$ is Zariski dense in $A_K$, then the special 0-cycles $(E_n)_{n\geq 0}$ are Zariski dense in $\mathcal{A}_\mathfrak{p}$.
\end{cor}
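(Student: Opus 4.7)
The plan is to apply Theorem \ref{lastthm} and argue by contradiction: if the Zariski closure of $\bigcup_n E_n$ in $\mathcal{A}_\mathfrak{p}$ were a proper subscheme, then a lifting argument exploiting the coprime-to-$p$ hypothesis on the orders of the $a_n$ would force the union of Galois orbits $\bigcup_n \Gal(\overline{K}/K)\cdot a_n$ to lie in a proper special subvariety of $A_K$, contradicting the density assumption.

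Let $\mathcal{V}$ denote the Zariski closure of $\bigcup_n E_n$ in $\mathcal{A}_\mathfrak{p}$. By Theorem \ref{lastthm}, every irreducible component of $\mathcal{V}$ is a component of $\mathcal{F}_i\cap \mathcal{A}_\mathfrak{p}$ for some flat cycle $\mathcal{F}_i\subseteq\mathcal{A}$. Suppose for contradiction that $\mathcal{V}\subsetneq \mathcal{A}_\mathfrak{p}$. Since each $\mathcal{F}_i$ is flat over $\mathcal{O}_{K,S}$, its generic and special fibres have the same dimension, and so $(\mathcal{F}_i)_K\subsetneq A_K$ for every $i$. Passing to a finite extension of $K$ and breaking into geometric Galois-conjugate components, we may further assume that each $(\mathcal{F}_i)_K$ is a single torsion coset $b_i+B_i\subseteq A_{\overline{K}}$ with $\dim B_i<\dim A_K$.

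The heart of the proof is the following lifting claim: if $a\in A(\overline{K})_{\operatorname{tors}}$ has order coprime to $p$ and $\pi_\mathfrak{p}(a)\in (\mathcal{F}_i)_\mathfrak{p}$, then $a\in B_i+A[N_i]$, where $N_i=\operatorname{ord}(b_i)$. Indeed, $\pi_\mathfrak{p}(a-b_i)\in (B_i)_\mathfrak{p}$ and multiplication by $N_i$ yields $\pi_\mathfrak{p}(N_ia)\in (B_i)_\mathfrak{p}$. Now $N_ia$ still has prime-to-$p$ order, and for any integer $m$ coprime to $p$ the reduction induces bijections on the $m$-torsion of $A$ and of $B_i$; uniqueness of the lift forces $N_ia\in B_i$, hence $a\in B_i+A[N_i]$. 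This locus is a finite union of torsion cosets of $B_i$, of dimension $\dim B_i<\dim A_K$, hence a proper closed subvariety of $A_{\overline{K}}$.

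Applying the claim, for every $\sigma\in \Gal(\overline{K}/K)$ and every $n$ the point $\sigma a_n$ reduces into some $(\mathcal{F}_i)_\mathfrak{p}$ and therefore lies in the corresponding $B_i+A[N_i]$. Consequently $\bigcup_n \Gal(\overline{K}/K)\cdot a_n$ is contained in the finite union $\bigcup_i (B_i+A[N_i])$, a proper closed subvariety of $A_{\overline{K}}$, contradicting the Zariski density hypothesis. The main subtlety, and the step where the coprime-to-$p$ hypothesis is genuinely needed, is the injectivity of the reduction map on prime-to-$p$ torsion invoked in the lifting claim; this is precisely the input that fails in presence of $p$-power torsion, as illustrated by Example \ref{anexample}.
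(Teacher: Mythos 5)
Your proposal is correct. Note that the paper never writes out a proof of this corollary (it is announced as a ``simple corollary'' of Theorem \ref{lastthm}), and the route implicit in the paper would go through Proposition \ref{prop222}: if $\mathcal{V}\subsetneq\mathcal{A}_\mathfrak{p}$, that proposition already places all the $a_n$ in $F+\bigcup_i D_i$ with each $D_i$ a proper abelian subvariety of $A$, contradicting density. You instead use only the \emph{statement} of Theorem \ref{lastthm} together with an elementary lifting claim based on the bijectivity of reduction on prime-to-$p$ torsion for an abelian scheme with good reduction; this is a clean, self-contained bridge that avoids re-entering the Faltings-based machinery of Section \ref{liftingcycles}, and it is exactly the point where the coprime-to-$p$ hypothesis enters (as Example \ref{anexample} shows it must). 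Two small points to tighten. First, when you write $(\mathcal{F}_i)_\mathfrak{p}=\pi_\mathfrak{p}(b_i)+(B_i)_\mathfrak{p}$ and invoke the bijection on $B_i[m]$, you are using that the Zariski closure of $b_i+B_i$ in $\mathcal{A}$ has special fibre equal to the corresponding translate of the reduction of $B_i$; this is a standard fact about closures of abelian subvarieties in an abelian scheme over a Dedekind base (and consistent with the paper's handling of flat cycles), but it deserves a word --- or can be bypassed by working in the quotient: since the closure of $B_i$ lies in $\ker\bigl(\mathcal{A}\to \mathcal{A}/B_i\bigr)$, the image of $N_ia$ in $A/B_i$ is a prime-to-$p$ torsion point reducing to $0$, hence is $0$, so $N_ia\in B_i$. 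Second, your contradiction is with density of $\bigcup_n\Gal(\overline{K}/K)\cdot a_n$ in $A_{\overline{K}}$ while the hypothesis is density in $A_K$; since this set is Galois stable and your bounding set $\bigcup_i\bigl(B_i+A[N_i]\bigr)$ is a finite union of lower-dimensional cosets, replacing it by the union of its finitely many Galois conjugates gives a proper closed $K$-subvariety and the contradiction goes through.
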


The final main result of the paper shows that the ``$p$-rank zero factor'' explained in Example \ref{anexample} is indeed the only obstruction to the lifting problem.
\begin{thm}\label{pureppart}
Let $ \Oo_{K,S}, \mathfrak{p}, p, \pi_\mathfrak{p} $ and $\mathcal{A}$ be as in Theorem \ref{lastthm}. If $(a_n)_n\subset A_{\operatorname{tors}}$ is a sequence of torsion points (possibly of order divisible by $p$), then every component  of the Zariski closure $\mathcal{V}$ of
\begin{displaymath}
\bigcup_{n\geq 0}\pi_{\mathfrak{p}}(\Gal(\overline{K}/K) \cdot a_n) \subset \mathcal{A}_\mathfrak{p}\subset \mathcal{A},
\end{displaymath}
is of the form $\alpha + \mathfrak{B}$, and there exists an abelian subvariety $B \subseteq A$ 
(over $\overline{K}$) 
such that $ \mathfrak{B} \subset B_\mathfrak{p}$ and the quotient $B_\mathfrak{p}/  \mathfrak{B}$ is of $p$-rank zero.
\end{thm} 

\begin{rmk}\label{rmkimplication}
Any of Theorem \ref{mainthm} or \ref{mainthmlift} implies the classical Manin--Mumford Theorem \ref{mm} (at least after it is reduced to $k=\Qbar$). Let indeed be $V\subset A_{\Qbar}$ be a component of the Zariski closure of a sequence of torsions points $(a_n)_n$. Consider the special zero cycles $E_{m,n}$ given by reducing modulo $\mathfrak{p}_m$ (the $m$th prime) the Galois orbit of $a_n$. By the horizontal Manin--Mumford (i.e. Theorem \ref{mainthm} for the flat components), we have a torsion coset $B\subset A_{\Qbar}$ such that
\begin{displaymath}
E_{m,n}\subseteq B \subseteq V \subseteq A_{\Qbar}.
\end{displaymath}  
Repeating this on $A/B$ concludes the argument. The same proof works with the strong vertical Manin--Mumford established in Theorem \ref{mainthmlift}, choosing carefully the reduction place (in both cases we need a characteristic zero abelian subvariety of $A$).
\end{rmk}
Hindry's approach to the Manin--Mumford conjecture turned out to provide a fruitful strategy for the \emph{André--Oort conjecture}, as first envisioned by Edixhoven \cite{bas}. We refer to \cite{MR3821177, zbMATH06792103} for an overview of recent results and progress. We hope that our strategy can be applied to obtain new evidences in favour of Richard's arithmetic conjectures. Finally we remark that there are also variants of the classical Manin-Mumford in positive characteristic by Hrushovsky, Pink-Rossler and Scanlon (see for example \cite{zbMATH05004165} and references therein). They appear to be interested in a different problem, since here it is crucial to start with a an abelian scheme over some ring of integers and take Galois orbits under the absolute Galois group of a number field.
\subsection{Strategy of the proof and outline of the paper}
The proof of the horizontal part of Theorem \ref{mainthm} is inspired by the approach of Hindry \cite{hindry} to his proof of Theorem \ref{mm}. Our arithmetic variant, however, presents new phenomena and new difficulties, as Remark \ref{rmkintro} explains. The strategy for Theorem \ref{mainthmlift} and Theorem \ref{pureppart} is more complicated, and relies on the Tate conjecture (both the $p$-adic and mod $p$ variants).

\subsubsection{Horizontal and weak vertical}\label{section111}
Let $c$ be a positive integer and $L=L(c)$ be Lang group of $c$-th powers \emph{homotheties}, as in \eqref{langdef}. It acts on each $\mathcal{A} (\overline{ \kappa (\mathfrak{p})})$. As for Hindry's proof, the only ``Galois input'' we need to know is the comparison explained in Remark \ref{remarkaction} between the $\Gal(\overline{K}/K)$ and $L$ actions on torsion points. In particular the following implies Theorem \ref{mainthm}.
\begin{thm}\label{thm22} Let $\mathcal{V}\subseteq \mathcal{A}$ be a closed $\Oo_{K,S}$-subscheme of $\mathcal{A}$ which admits a Zariski dense set of closed points $(a_n)$ such that $L\cdot a_n\subseteq \mathcal{V}$. Then $\mathcal{V}$ is special.
\end{thm}
The strategy is based on Proposition \ref{keyprop}. From a subvariety $V$ of $\mathcal{A}_{\mathfrak{p}_n}$ of degree $D$ and dimension $\Delta$, containing $L\cdot a_n\subseteq \mathcal{A}_{\mathfrak{p}_n} $, we show that, if the $\ord (a_n)$ is bigger than some function $f=f(D, \Delta)$, then there exists $0\neq B\subseteq \mathcal{A}_{\mathfrak{p}_n}$ an abelian subvariety of $\mathcal{A}_{\mathfrak{p}_n}$ such that $a_n+B \subset V$ and $\delta := \ord (a+B)$ (its order in $\mathcal{A}_{\mathfrak{p}_n} / B$ ) is smaller than $\ord (a_n)$. After repeating the argument in $\mathcal{A}_{\mathfrak{p}_n} / B$, we may assume that $\dim B=0$ and that $\ord (a_n)$ is bounded by $f$. 

\begin{rmk}\label{rmkintro}
If a torsion coset $a+ A'$ of $A$ is contained in a subvariety $V\subseteq A$ and in $[q](V)$, for some $q>1$, then 
\begin{displaymath}
a+A' \subseteq V \cap [q](V)
\end{displaymath}
and in particular there is an irreducible component $V'$ of $V$ such that $a+A' \subseteq V'$. Compared with Hindry's approach, where everything is defined over a number field $K$, the main difference is that there
\begin{displaymath}
\Gal(\overline{K}/K)\cdot a + A' \subset V'.
\end{displaymath}
In an arithmetic pencil, this point becomes much more subtle, since the standard habit of reducing to irreducible components is usually incompatible with the problem. Indeed the main new difficulty we have to face is that we do not know, a priori, that every component of the reduction of $V$ mod $\mathfrak{p}_n$ contains many elements of the special 0-cycle $E_n$, even if the cardinality of $E_n$ is tending to infinity.
\end{rmk}

\subsubsection{Vertical and lifting}\label{strategyvertical}
Theorem \ref{mainthmlift} requires different ideas and more sophisticated tools. It turns out that, using various forms of the Tate conjecture (Faltings theorem, see Section \ref{sectiontate}), the only abelian subvarieties we have to consider are the ones coming from characteristic zero. This is interesting because there are many more abelian subvarieties of $\mathcal{A}_\mathfrak{p}$, than the one coming from characteristic zero. The analogy with Shimura varieties is less clear: what is a \emph{special subvariety} of the reduction mod $p$ of a Shimura variety, which does not come from characteristic zero (that is from reducing mod $p$ a Shimura subvariety)?

We briefly explain our strategy for Theorems \ref{mainthmlift} and \ref{lastthm} (with the same notation as in Theorem \ref{lastthm}). By looking at the order of the $a_n$, there are essentially two possibilities to keep in mind: 
\begin{enumerate}
\item $\ord(a_n)=\ell^n$, for all $n$ (where $\ell$ is a prime different from $p$);
\item $\ord (a_n)$ is the $n$-th prime number, for all $n$.
\end{enumerate}
The main protagonists are (considered as $\Gal(\overline{K}/K)$-representation):
\begin{itemize}
\item The $\ell$-adic (rational) Tate module of $A$ (for $\ell\neq p$);
\item The adelic Tate module of $A$ (or quotients thereof).
\end{itemize}
Depending only on the order of the $a_n$, we use the $\ell$-adic Tate module in (1), and the adelic one in (2). The idea in both cases is similar, even if it may require different tools, and can be summarised as follows. Let $V$ be one of the above \emph{semisimple} $\Gal(\overline{K}/K)$-representations. The Galois orbits of the points $a_n$ determine a subrepresentation $W\subset V$, such that $V=W \oplus W'$. Faltings's theorem asserts that such a decomposition is induced by an element $u \in \End(A)\otimes E_V $, where $E_V \in \{\Q_\ell, \text{ a quotient of }\widehat{\Z}\}$, depending on $V$. Theorem \ref{mainthm} asserts that such a decomposition is, ``after reducing mod $\mathfrak{p}$'', induced by an element of $\End (A _{\mathfrak{p}})$, and we prove that $u$ can be found in $\End(A)\otimes \Q$ (thanks to the formal descent lemma \ref{keylemma}). This corresponds to lifting a torsion cycle to a flat one. 

\subsubsection{Points of $p$-torsion in characteristic $p$}\label{sectionstrategyp} The proof of Theorem \ref{pureppart} is inspired by the $\ell$-adic argument presented above, but it is necessarly more complicated, since it has to take into account Example \ref{anexample}. To emulate the argument of Section \ref{strategyvertical}, we need to find a vector space where both $\End(A)\otimes \Q_p$ and $\End(\mathcal{A}_\mathfrak{p})\otimes \Q_p$ act faithfully and compatibly. This is done using some comparison theorem from $p$-adic Hodge theory, but it requires to extend the coefficients from $\Q_p$ to one of Fontaine's period rings (see Proposition \ref{thingy statement}). Such results are presented in a self contained way in the final Appendix \ref{preliminaries}.

\subsection*{Acknowledgements}
The second author would like to thank the I.H.E.S. for a research visit in September and October 2020, during which most of the work regarding the horizontal Manin--Mumford was done. Rodolphe Richard was supported by the Leverhulme grant RPG-2019-180. Finally we thank the referees for their careful reading and comments.

\section{Preliminaries and Hindry's method}\label{prelimnaries}
In this section we first reduce the main results to the case where $\End(A_K)=\End(A_{\overline{K}})$. We then define the Lang group and recall its relation to the Galois action on torsion points. Finally we review some results which appear in Hindry's method for Manin--Mumford \cite{hindry}, and in the process we check that Hindry's results hold true also over fields of arbitrary characteristics.

\subsection{A first reduction}\label{reduction}
We explain here why the statements of the results of Section \ref{mainresultsec} are not affected by finite field extensions $K'/K$. This may look surprising at first since there can be many more $K'$-abelian subvarieties of a $K$-abelian variety $A$, rather than $K$-abelian subvarieties.

Without loss of generality, we may assume that $K'/K$ is Galois. Let $(E_n)_{n\geq 0} \subset \mathcal{A}$ be a sequence of special zero cycles obtained as the $\Oo_{K,S}$-Zariski closure of a sequence of torsion points $a_n\in A(\overline{K})$, and $(E'_n)$ be the $\Oo_{K',S'}$-Zariski closure of the $a_n$ (for some set of $K'$ places $S'$ above $S$). Let $\mathcal{V}\subset \mathcal{A}$ (resp. $\mathcal{V}'\subset \mathcal{A}_{\Oo_{K',S'}}$) be the Zariski closure of the $E_n$ (resp. of the $E'_n$). The closure of $\mathcal{V}'$ over $\Oo_{K,S}$ is $\mathcal{V}$. In particular if $\mathcal{V}'$ is special or even a finite union of flat cycles and intersections of flat cycles and vertical fibres, so is $\mathcal{V}$. Finally fix $\mathfrak{p}\subset \Oo_{K,S}$ and $\mathfrak{p}' \subset \Oo_{K',S'}$ above $\mathfrak{p}$. If every $K'$-component of $\mathcal{V'}$ $F+\mathcal{B}$ (for some finite set $F\subset \mathcal{A}$ and abelian $\Oo_{K',S'}$ subscheme of $\mathcal{A}$) there exists an abelian subvariety $B' \subset A_{K'}$ such that $\mathcal{B}\subset B_{\mathfrak{p}'}$ of $p$-corank zero, then the same holds for the $K$-components of $\mathcal{V}$.

From now on, we may and do assume that $\End(A_K)=\End(A_{\overline{K}})$.

\subsection{Lang group and adelic Tate module}\label{langsection}
Let $K$ be a number field, and $A$ be a $g$-dimensional abelian variety over $K$ (for some $g>0$). Denote by $T(A)$ the $\Gal(\overline{K}/K)$-module obtained by taking the inverse limit of the $A[n]$'s, that is the (integral) \emph{adelic Tate module} of $A$. We write $\widehat{\Z}$ for the profinite completion of the ring of integers $\Z$, and notice that $T(A)\cong \widehat{\Z}^{2g}$. The proof of the following can be found for example in \cite[Thm. B]{cadoretmoonen}. 
See also \cite[Cor. 1, page 702]{bog}, \cite[Sec. 2 (a)]{hindry}, \cite{zbMATH01837199}, and references therein. For any $c\in \Z_{\geq 1}$, we define the \emph{Lang group} $L=L(c)$ by the formula
\begin{equation}\label{langdef}
L=L(c):=\ker \left( \widehat{\Z}^*\to \widehat{\Z}^* \otimes \Z /c\Z \right),
\end{equation}
seen as a subgroup of $\Gl(T(A))$. By definition, $L=L(c)$ acts on the torsion points of $A$. 
\begin{thm}[Bogomolov--Serre]\label{lang}
There exists a positive integer $c=c(A,K)$ such that the image of the map describing the action of $\Gal(\overline{K}/K)$ on the torsion points,
\begin{displaymath}
\rho_A : \Gal(\overline{K}/K)\to \Gl(T(A)),
\end{displaymath}
contains the $c$-th powers of the \emph{homotheties}. That is $L(c) \subset \im (\rho_A). $
\end{thm}

\begin{rmk}\label{remarkaction}
As a corollary of Theorem \ref{lang}, we have the following. Let $a\in A_{\operatorname{tors}} (\overline{K})$ be of order $d$, and $l$ a positive integer prime to $d$. Then there exists $\sigma \in \Gal(\overline{K}/K)$ such that
\begin{displaymath}
\sigma\cdot a=\sigma(a)=[l^c](a).
\end{displaymath}
\end{rmk}

\subsection{Degree, stabiliser subgroup, and a criterion for specialness}\label{degreformulas}
We discuss Hindry's results of Bézout type in abelian varieties (see \cite[Sec. 1]{hindry}). The first use of a Bézout theorem/intersection theory to approach Lang conjecture can be dated back to Tate's proof \cite[Sec. 2 (page 231)]{langdiv} of a case of the multiplicative Manin--Mumford. See also the discussion about abelian varieties in Section 3 of \emph{op. cit.}.

Let $k$ be an algebraically closed field of characteristic $p\geq 0$, and $A$ be an abelian variety (of dimension $>1$) over $k$.
\begin{lemma}\label{lemmahin}
Let $V$ be an irreducible subvariety of $A$, and let $B\subset A$ be its stabiliser. Let $q$ be a positive integer not divisible by $p$. We have
\begin{equation}\label{formuallemma}
\deg([q](V))=q^{2\dim(V)}\deg(V)\cdot (\# B[q])^{-1}.
\end{equation}
\end{lemma}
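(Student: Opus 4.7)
\medskip

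\noindent\textbf{Proof plan.} The plan is to combine the projection formula for the finite isogeny $[q]:A\to A$ with the standard fact that on a symmetric ample class $H$ on $A$ one has $[q]^{*}H\equiv q^{2}H$ (numerically). Fix such a polarisation defining the degree used in the statement, and write $\Delta=\dim(V)$.

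First I would restrict the multiplication map to obtain a finite surjective morphism
\begin{equation*}
\varphi:=[q]|_{V}\colon V\longrightarrow [q](V),
\end{equation*}
and apply the projection formula to the $\Delta$-th self-intersection of $H$: on the one hand $\varphi^{*}(H^{\Delta})\cdot [V]=[q]^{*}(H^{\Delta})\cdot[V]=q^{2\Delta}\cdot\deg(V)$; on the other hand $\varphi^{*}(H^{\Delta})\cdot[V]=\deg(\varphi)\cdot H^{\Delta}\cdot[q]_{*}[V]=\deg(\varphi)\cdot\deg([q](V))$. This reduces the lemma to the identity $\deg(\varphi)=\#B[q]$. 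Here the hypothesis $p\nmid q$ is essential: it makes $[q]$ étale, hence separable, so that $\deg(\varphi)$ equals the cardinality of a generic (scheme-theoretic) fibre.

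Next I would compute such a generic fibre set-theoretically. For $v\in V$ and $w=\varphi(v)$, the fibre $\varphi^{-1}(w)$ equals $\{v+a : a\in A[q],\ v+a\in V\}$. Since $B=\operatorname{Stab}(V)$, every $a\in B[q]$ contributes, giving the inclusion $B[q]\hookrightarrow \varphi^{-1}(w)$ for any $v$. For the reverse inclusion, note that $A[q]$ is finite, so one may choose $v$ outside the finite union of proper closed subsets $V\cap(V-a)$ for those $a\in A[q]\setminus B$ with $V-a\neq V$. For such generic $v$, if $v+a\in V$ with $a\in A[q]$, then by definition of the stabiliser $V-a=V$ forces $a\in B\cap A[q]=B[q]$. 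The étaleness of $[q]$ ensures that the scheme-theoretic fibre is reduced, so its length equals $\#B[q]$.

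The main (mild) obstacle is the bookkeeping in the last step: ensuring that the generic fibre is truly reduced of cardinality $\#B[q]$, which needs both the separability of $[q]$ (hence $p\nmid q$) and a careful use of genericity to rule out set-theoretic contributions from elements of $A[q]\setminus B$. Once this is in place, substituting $\deg(\varphi)=\#B[q]$ into the projection formula identity above yields
\begin{equation*}
\deg([q](V))\;=\;\frac{q^{2\Delta}\deg(V)}{\#B[q]},
\end{equation*}
which is \eqref{formuallemma}.
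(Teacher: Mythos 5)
Your proof is correct and is essentially the paper's own argument: the paper simply cites Hindry's Lemma 6 (noting it goes through when $p\nmid q$), and that proof is precisely your projection-formula computation, using $[q]^{*}H\equiv q^{2}H$ together with the identification $\deg\bigl([q]|_{V}\bigr)=\#B[q]$ via a generic-fibre count. Your observation that $p\nmid q$ makes $[q]$ étale, so the generic fibre is reduced and the count is legitimate, is exactly the point the paper flags in extending Hindry's lemma to positive characteristic.
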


\begin{proof}
The proof presented in \cite[Lem. 6]{hindry} applies also to cover the above statement, if $p\nmid q$.
\end{proof}

For simplicity, form now, we simply write that $q$ is prime to $p$, to mean that $q$ is non zero and not divisible by $p$ (which makes sense both when $p=0$ and $p>0$).
\begin{cor}\label{corhin}Let $ A, V$, and $B$ as in Lemma \ref{lemmahin}. Let $q, q' $ be positive integers, prime to each other and to $p$, such that
\begin{displaymath}
[q](V)=[q'](V).
\end{displaymath}
Then 
\begin{displaymath}
\# B[q]=
q^{2\dim(V)}, \text{  }\# B[q']=
q'^{2\dim(V)},\text{ and }\# B[qq']=
(qq')^{2\dim(V)}.
\end{displaymath}
\end{cor}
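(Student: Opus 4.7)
First, I would apply Lemma~\ref{lemmahin} to both $[q](V)$ and $[q'](V)$. Since $[q](V) = [q'](V)$, their degrees agree, and after cancelling the common factor $\deg(V)$ this gives
\begin{equation*}
q^{2\dim V} \cdot \#B[q'] \;=\; q'^{2\dim V} \cdot \#B[q].
\end{equation*}
This is the entire input from the geometry; the remainder of the proof should be a divisibility chase, fuelled by the hypothesis that $q$ and $q'$ are coprime to each other and to the characteristic $p$.

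Next, I would exploit that, since $q$ is prime to $p$, every element of $B[q]$ has order dividing $q$, so $\#B[q]$ is an integer all of whose prime factors divide $q$; in particular $\#B[q]$ is coprime to $q'$, and symmetrically $\#B[q']$ is coprime to $q$. Comparing $\ell$-adic valuations in the displayed identity at any prime $\ell$ dividing $q'$ eliminates both $q^{2\dim V}$ and $\#B[q]$ from consideration, leaving $v_\ell(\#B[q']) = 2\dim V\cdot v_\ell(q')$. Since every prime factor of $\#B[q']$ already divides $q'$, this pins down $\#B[q']$ precisely as $q'^{2\dim V}$. A symmetric argument, reversing the roles of $q$ and $q'$, gives $\#B[q] = q^{2\dim V}$.

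For the last assertion I would invoke that coprimality of $q$ and $q'$ yields the Chinese-remainder decomposition $B[qq'] \cong B[q] \oplus B[q']$, so $\#B[qq'] = \#B[q]\cdot \#B[q'] = (qq')^{2\dim V}$. I do not anticipate any substantive obstacle: the heavy lifting is already packaged in Lemma~\ref{lemmahin}, and what remains is a bookkeeping of prime valuations. The one place demanding a moment's vigilance is that the statement ``$\#B[n]$ is supported on primes dividing $n$'' requires $n$ to be prime to the residue characteristic $p$, which is precisely the hypothesis of the corollary and matches the hypothesis used in Lemma~\ref{lemmahin}.
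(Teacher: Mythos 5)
Your proof is correct and follows essentially the same route as the paper: equate the two degree formulas from Lemma~\ref{lemmahin} using $[q](V)=[q'](V)$, use coprimality of $q$ and $q'$ to force $\#B[q]=q^{2\dim V}$ and $\#B[q']=q'^{2\dim V}$, and conclude via $B[qq']\cong B[q]\times B[q']$. Your valuation bookkeeping simply makes explicit the paper's phrase ``the only possibility is''; the only superfluous caution is that $\#B[n]$ being supported on primes dividing $n$ holds for the group of $\overline{k}$-points regardless of $p$, though coprimality to $p$ is of course needed to invoke Lemma~\ref{lemmahin} itself.
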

\begin{proof}
From Lemma \ref{lemmahin} we have that
\begin{displaymath}
q^{2\dim V}\# B[q]^{-1}=q'^{2 \dim V} \# B[q']^{-1}.
\end{displaymath}
Since $q$ and $q'$ are coprime, the only possibility is that $\# B[q]=q^{2\dim(V)}$ and $\# B[q']=q'^{2\dim(V)}$. Again by the fact that $q$ and $q'$ are coprime, we have $B[qq']=B[q]\times B[q'],$
concluding the proof.
\end{proof}

\begin{lemma}\label{lemmadegree} Let $ A, V$, and $B$ as in Lemma \ref{lemmahin}. Denote by $B^0$ the neutral component of $B$, and the group of components of $B$ by $\pi_0(B)=B/B^0$. We have
\begin{displaymath}
\#\pi_0(B)\leq \deg(B)=\#\pi_0(B)\cdot \deg(B^0)\leq \deg(V),
\end{displaymath}
and 
\begin{displaymath}
\#B[q]=\#\pi_0(B)[q]\cdot\#B^0[q]\leq\#\pi_0(B)\cdot q^{2\dim(B)}.
\end{displaymath}
Assume now that $\#B[q]=q^{2\dim(V)}$. Then
\begin{displaymath}
q^{2(\dim(V)-\dim(B))}\leq \#\pi_0(B)\leq \deg(V).
\end{displaymath}
If moreover
\begin{displaymath}
\deg(V) < q^2,
\end{displaymath}
then $V$ is special.
\end{lemma}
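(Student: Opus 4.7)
The plan is to establish the three displays (i)–(iii) by routine bookkeeping on the finite group scheme $B$, and then to extract the geometric conclusion of (iv) from a dimension count.

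For (i), decompose $B = \bigsqcup_{c \in \pi_0(B)} (B^+ + b_c)$ into its connected components, each of degree $\deg(B^+) \geq 1$; this yields the equality $\deg(B) = \#\pi_0(B)\cdot \deg(B^+)$ and the lower bound $\#\pi_0(B) \leq \deg(B)$. For $\deg(B) \leq \deg(V)$, fix any $v \in V$: by definition of the stabiliser $B + v \subseteq V$, and translation preserves degrees, whence $\deg(B) = \deg(B + v) \leq \deg(V)$.

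For (ii) apply the snake lemma to multiplication by $q$ on the short exact sequence of commutative group schemes
$$0 \to B^+ \to B \to \pi_0(B) \to 0.$$
Since $B^+$ is an abelian variety, $[q]\colon B^+ \to B^+$ is surjective, so the induced sequence on $q$-torsion
$$0 \to B^+[q] \to B[q] \to \pi_0(B)[q] \to 0$$
is short exact. The coprimality $q \wedge p = 1$ then enters via the identity $\#B^+[q] = q^{2\dim B^+} = q^{2\dim B}$, which combined with the trivial bound $\#\pi_0(B)[q] \leq \#\pi_0(B)$ gives (ii). For (iii), substitute the hypothesis $\#B[q] = q^{2\dim V}$ into the factorisation just obtained to get $\#\pi_0(B)[q] = q^{2(\dim V - \dim B)}$, and combine with (i) to upper-bound this by $\deg(V)$.

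For (iv), the assumption $\deg(V) < q^2$ together with (iii) forces $q^{2(\dim V - \dim B)} < q^2$, hence $\dim V - \dim B < 1$; on the other hand $B$ stabilises $V$, so $B + v \subseteq V$ for any $v \in V$ implies $\dim B \leq \dim V$. Combining, $\dim B = \dim V$. Picking any $v \in V$, the irreducible subvariety $B^+ + v$ has dimension $\dim V$ and is contained in the irreducible variety $V$, whence $V = B^+ + v$ is a coset of the abelian subvariety $B^+$, i.e.\ special. The only genuine subtlety is the identification $\#B^+[q] = q^{2\dim B}$ in (ii), which is precisely where the coprimality $q \wedge p = 1$ inherited from Lemma \ref{lemmahin} is essential; everything else is formal bookkeeping with degrees and dimensions.
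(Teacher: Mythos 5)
Your handling of the second and third displays and of the final dimension count is correct and essentially identical to the paper's argument: the snake lemma applied to $0\to B^+\to B\to\pi_0(B)\to 0$ (with surjectivity of $[q]$ on $B^+(\overline k)$ killing the connecting map), the identity $\#B^+[q]=q^{2\dim B}$ for $q\wedge p=1$, and then $q^{2(\dim V-\dim B)}\leq \deg(V)<q^2$ forcing $\dim B=\dim V$, whence $V=v+B$ for any $v\in V$ by irreducibility.

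There is, however, a genuine gap in your treatment of the first display, precisely at the one point where the paper does not argue from scratch but invokes \cite[Lemma 8]{hindry}. You deduce $\deg(B)\leq\deg(V)$ from ``$B+v\subseteq V$ and translation preserves degree, hence $\deg(B)=\deg(B+v)\leq\deg(V)$''. The implicit principle --- that a closed subvariety of $V$ has degree at most $\deg(V)$ --- is false once the dimension drops: degree (here, with respect to an ample symmetric line bundle as in Hindry) is not monotone under inclusion. For instance, a quadric surface in $\mathbb{P}^3$ contains curves of arbitrarily large degree, and an abelian surface $A$ with $\deg_L(A)=L^2$ small contains curves $C$ with $L\cdot C$ arbitrarily large; so the containment $B+v\subseteq V$ alone gives no bound on $\deg(B)$. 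The inequality $\deg(\mathrm{Stab}(V))\leq\deg(V)$ is a nontrivial fact about stabilisers, and it is exactly what the paper outsources to Hindry's Lemma 8. Moreover this is not a cosmetic defect: your step (iii) and the final conclusion use $\#\pi_0(B)\leq\deg(V)$, so the gap propagates to the specialness statement. To repair the proof, replace your monotonicity argument by a citation of \cite[Lemma 8]{hindry} (as the paper does), or by an actual proof of the stabiliser degree bound; your decomposition argument only yields the correct but weaker chain $\#\pi_0(B)\leq\deg(B)=\#\pi_0(B)\cdot\deg(B^+)$.
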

\begin{proof}
The first formula follows from \cite[Lem. 8]{hindry}. For the second formula, just notice that $\#B^0[q](\overline{k})=q^{2\dim B^0}$, as long as $q$ is prime to $p$. The third formula is a direct consequence of the previous two.

Regarding the last conclusion, we necessarily have $\dim(V)-\dim(B)=0$, since otherwise 
\begin{displaymath}
q^{2(\dim(V)-\dim(B))} \leq \#\pi_0(B)\leq \deg(V) < q^2.
\end{displaymath}
Choose a point $v\in V$, and notice that necessarily $v+B\subset V$. As $\dim(v+B)=\dim(B)=\dim(V)$, and $V$ is irreducible, we have that $V=v+B$ is special.
\end{proof}

\begin{prop}\label{Hindry Lemma Criterion} Let $V$ be an irreducible subvariety of $A$, $q$ and $q'$ be prime to each other and to $p$ be such that
\begin{displaymath}
[q](V)=[q'](V), \text{  and  } \deg(V)^{1/2}<q\cdot q'.
\end{displaymath}
Then $V$ is a special subvariety of $A$, and so is $[q](V)$.
\end{prop}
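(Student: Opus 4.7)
The plan is to chain together the two preceding results: Corollary \ref{corhin} produces a stabiliser that is ``divisible enough'', and Lemma \ref{lemmadegree} converts such divisibility into a bound that forces specialness when the degree of $V$ is small relative to that divisibility.

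First I would let $B\subseteq A$ denote the stabiliser of the irreducible subvariety $V$. Since $q$ and $q'$ are coprime and both prime to $p$, the product $qq'$ is also prime to $p$, and the hypothesis $[q](V)=[q'](V)$ is exactly the situation of Corollary \ref{corhin}. Applying it yields the key divisibility
\begin{displaymath}
\#B[qq'] = (qq')^{2\dim(V)}.
\end{displaymath}
Next I would apply Lemma \ref{lemmadegree} with the integer $qq'$ playing the role of $q$ in that lemma (again legitimate because $qq'$ is prime to $p$). The divisibility above is exactly the hypothesis of the final clause of Lemma \ref{lemmadegree}, and our degree assumption $\deg(V)^{1/2}<qq'$ rearranges to $\deg(V)<(qq')^2$, which is the remaining hypothesis. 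The lemma then forces $\dim(V)=\dim(B)$ and $V=v+B$ for any $v\in V$, so $V$ is special.

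Finally, for the assertion about $[q](V)$, I would observe that since $V$ is irreducible and equals $v+B$, it must in fact coincide with the coset $v+B^+$ of the neutral component (all other components of $v+B$ being disjoint translates of $v+B^+$). Then
\begin{displaymath}
[q](V) = [q]v + [q](B^+) = [q]v + B^+,
\end{displaymath}
using surjectivity of multiplication by $q$ on the abelian variety $B^+$ (over an algebraically closed field). This again exhibits $[q](V)$ as a coset of an abelian subvariety, hence special in the sense of Lemma \ref{lemmadegree}.

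There is no genuine obstacle here: the proposition is a direct packaging of Corollary \ref{corhin} and Lemma \ref{lemmadegree}. The only points meriting a line of verification are the compatibility of the coprimality conditions when passing from $(q,q')$ to the single integer $qq'$, and the reduction from $V=v+B$ to $V=v+B^+$ needed to transfer specialness to $[q](V)$.
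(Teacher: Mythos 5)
Your argument is correct and is essentially the paper's own proof: apply Corollary \ref{corhin} to get $\#B[qq']=(qq')^{2\dim(V)}$, then invoke the last clause of Lemma \ref{lemmadegree} with $qq'$ in place of $q$, using $\deg(V)<(qq')^2$. Your extra paragraph verifying that $[q](V)=[q]v+B^{+}$ is again a coset is a sound (and welcome) elaboration of a point the paper leaves implicit.
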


Notice that the condition $\deg(V)^{1/2}<q\cdot q'$ is actually implied by $\deg(V)^{1/4}\leq q<q'$. We only use the latter condition. 
\begin{proof}Corollary \ref{corhin} implies that $\#B[qq']=(qq')^{2\dim(V)}.$
Therefore the assumption on $\#B[qq']$ (with $qq'$ for $q$) in Lemma \ref{lemmadegree} is satisfied and so $V$ is special. 
\end{proof}

Finally we check for reference that Hindry's criterion \cite[Lem. 15]{hindry} stays valid in any characteristics, and that it applies to subvarieties which are not necessary irreducible. See also \cite[Cor. on page 703]{bog}.
\begin{cor}[Hindry] \label{corhindry}Let $V$ be a subvariety of $A$. If $V=[q](V)$, for some $q>1$ prime to $p$, then each component of $V$ is special.
\end{cor}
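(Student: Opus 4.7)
The plan is to reduce to the irreducible case already handled by the preceding lemmas, the only wrinkle being that the hypothesis $V=[q](V)$ need not hold componentwise.

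First, since $q$ is prime to the characteristic $p$, the multiplication map $[q]\colon A\to A$ is a finite \'etale surjection. Hence it sends irreducible components of $V$ to irreducible components, and the identity $[q](V)=V$ means that $[q]$ induces a permutation of the finite set of components $\{V_1,\dots,V_m\}$ of $V$. Letting $N$ be the order of this permutation (for instance $N=m!$), we obtain
\begin{displaymath}
[q^N](V_i)=V_i \qquad \text{for every } i=1,\dots,m.
\end{displaymath}
Set $Q:=q^N$; note that $Q$ is still prime to $p$ and that $Q$ can be made arbitrarily large by enlarging $N$ (replacing $N$ by any of its multiples), since $q>1$.

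Now fix an irreducible component $W=V_i$, let $B=\Stab(W)\subseteq A$ be its stabiliser, and apply Lemma~\ref{lemmahin} with $Q$ in place of $q$. Using $[Q](W)=W$ we obtain
\begin{displaymath}
\deg(W)=\deg([Q](W))=Q^{2\dim(W)}\deg(W)\cdot(\#B[Q])^{-1},
\end{displaymath}
so $\#B[Q]=Q^{2\dim(W)}$, which is the hypothesis needed to invoke the last part of Lemma~\ref{lemmadegree} (applied with $Q$ in place of $q$). Choosing $N$ large enough so that $Q^{2}=q^{2N}>\deg(W)$, Lemma~\ref{lemmadegree} forces $\dim(B)=\dim(W)$, and hence $W$ is a coset of $B$, i.e.\ special. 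As this argument applies to every component $V_i$, the corollary follows.

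The only step that requires a bit of care is the reduction to $[Q](V_i)=V_i$, and this is precisely where the hypothesis $q\wedge p=1$ enters: without \'etaleness one cannot guarantee that $[q]$ merely permutes components rather than collapsing or expanding them. Everything else is a direct specialisation of the degree/stabiliser identities of Lemmas~\ref{lemmahin} and~\ref{lemmadegree}, and in particular we do not need Proposition~\ref{Hindry Lemma Criterion} here (one cannot feed two coprime integers to it from a single $q$).
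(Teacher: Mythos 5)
Your argument is correct and is essentially the paper's own proof: both pass to a power $q^N$ fixing each component (the paper takes $m=q^{\deg(V)!}$), and then conclude via the degree/stabiliser identities; your direct use of Lemma~\ref{lemmahin} and Lemma~\ref{lemmadegree} is just an unwinding of Proposition~\ref{Hindry Lemma Criterion}, which the paper applies to the pair $(m,q'=1)$ --- so, contrary to your final parenthetical, one \emph{can} feed it a single integer by taking $q'=1$. Two side remarks in your sketch should be corrected, though they do not affect validity: the permutation of components does not require \'etaleness (for any $q$, even divisible by $p$, the map $[q]$ is finite and surjective, so the images $[q](V_i)$ are irreducible closed sets covering $V$ and a counting argument shows they are exactly the components, permuted by $[q]$); the hypothesis $q\wedge p=1$ is instead needed exactly where you invoke Lemma~\ref{lemmahin} (the degree formula) and the equality $\#B^{+}[Q]=Q^{2\dim B^{+}}$ inside Lemma~\ref{lemmadegree}, both of which fail for $p\mid Q$.
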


\begin{proof} Let $r\leq \deg(V)$ the number of components of $V$, and $r!$ be the cardinality of the symmetric group acting on a set with $r$ elements. Since $[q]$ permutes the components of $V$, every component $Z$ is stable under $[q^{r!}]$, and similarly under $[m]$, for $m:=q^{\deg(V)!}$. Of course $m^2> \deg(V)$, and Proposition \ref{Hindry Lemma Criterion} on $Z$, $m$ and $q'=1$ applies to conclude. 
\end{proof}

\section{Horizontal and weak vertical, Theorem \ref{mainthm}}\label{mainsection} From now on we introduce some novelties in the Hindry's method, since, as explained in Remark \ref{rmkintro}, the standard way of applying it is incompatible with the various reductions mod $p$. For example we have to use the prime number theorem, among other things, to choose the primes carefully.

\subsection{Statement of the main proposition}\label{statementprop}Let $L=L(c)$ be the Lang group associated to some positive integer $c$, as defined in Section \ref{langsection}. The next result is the key for proving Theorem \ref{thm22}, and it is proven in Section \ref{proofmainprop}. Here we consider a field $k$ (with a fixed algebraic closure $\overline{k}$) of characteristic $p\geq 0$, and a $g$-dimensional abelian variety $A$ over $k$ (for some $g>1$).

\begin{prop}\label{keyprop}
Let $V$ be a subvariety of $A$ (not necessarily irreducible) of degree $D$ and dimension $\Delta$, and let $a$ be a torsion point of $A(\overline{k})$ of order $d$. Assume that
\begin{equation}\label{Hypotheses}
L\cdot a\subseteq V.
\end{equation}
Then there exist $\alpha \in A_{\operatorname{tors}}$ and $B\subset A$ an abelian subvariety of $A$ such that
\begin{displaymath}
L \cdot a \subseteq L\cdot \alpha+B\subseteq V, \text{  and   } \ord(\alpha+B)\leq \delta. 
\end{displaymath}
Where $\delta= \delta(D,\Delta,c)$ is a constant depending only on $D$, $\Delta$, and $c$. 
\end{prop}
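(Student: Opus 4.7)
The plan is to follow Hindry's strategy from \cite{hindry}, adapted to arbitrary characteristic. I would produce a closed subvariety $V^\ast\subseteq V$ containing $L\cdot a$ and stable under multiplication by every integer coprime to $dp$ (where $p=\Char k$); this will force every irreducible component of $V^\ast$ to be a torsion coset, and a single $L$-orbit of such components will cover $L\cdot a$ and provide the required $\alpha$ and $B$; the size of this $L$-orbit will then bound $\ord(\alpha+B)$.

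First I would set
\[
V^\ast := \bigcap_{q\ge 1,\ (q,dp)=1} [q]^{-1}(V),
\]
which stabilizes as a descending intersection of closed subvarieties of $V$. Since $[q](L\cdot a)=L\cdot a\subseteq V$ for every $q$ coprime to $d$, we have $L\cdot a\subseteq V^\ast$. Moreover $[q_0](V^\ast)\subseteq V^\ast$ for every $q_0$ coprime to $dp$: if $x\in V^\ast$ and $q$ is coprime to $dp$, then so is $qq_0$, and $[q]([q_0]x)=[qq_0]x\in V$. Since $[q_0]$ is a finite surjection of $A$ for $q_0$ coprime to $p$, it permutes the finite set of irreducible components of $V^\ast$; some iterate $[q_0^k]$ fixes each component individually, yielding $V^\ast=[q_0^k](V^\ast)$ with $q_0^k>1$ coprime to $p$. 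Corollary~\ref{corhindry} then gives that every irreducible component of $V^\ast$ is a torsion coset.

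Let $W_1=\alpha_0+B$ be a component of $V^\ast$ containing $a$; after translation I may take $\alpha_0=a$. For any $b=[l^c]a\in L\cdot a$ (with $l$ a positive integer coprime to $dp$, which exists using CRT and the description $L=(\widehat{\Z}^\times)^c$), the subvariety $[l^c](W_1)=[l^c]a+B$ is a component of $V^\ast$ containing $b$; the underlying abelian subvariety remains $B$ because $[l^c]$ is an isogeny of $B$ for $l$ coprime to $p$. Hence
\[
L\cdot a\ \subseteq\ \bigcup_{l}\bigl([l^c]\alpha_0+B\bigr)\ =\ L\cdot\alpha_0+B\ \subseteq\ V^\ast\ \subseteq\ V,
\]
which is the desired inclusion $L\cdot a\subseteq L\cdot\alpha+B\subseteq V$ with $\alpha=\alpha_0$. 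To bound $n_0:=\ord(\alpha_0+B)$, observe that the number of distinct cosets $\{[l^c]\alpha_0+B\}$ is the size of the orbit of $\bar\alpha_0\in A/B$ under the image of $L$, namely $((\Z/n_0\Z)^\times)^c$, which is at least $\varphi(n_0)/|\{u\in(\Z/n_0\Z)^\times:u^c=1\}|$ and tends to infinity with $n_0$. This orbit size is at most the number of components of $V^\ast$, itself bounded by $\deg V^\ast$; bounding $\deg V^\ast$ by a function of $D,\Delta,c$ therefore forces $n_0\leq\delta(D,\Delta,c)$.

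The main obstacle I expect is showing that $\deg V^\ast$ is bounded uniformly in $D,\Delta,c$, since a naive Bézout estimate applied to a long intersection $\bigcap_q[q]^{-1}(V)$ blows up quickly. The strategy is to truncate the intersection at a bounded stage: once a partial intersection $V'$ satisfies $[q](V')=V'$ for some $q>1$ coprime to $p$ with $q^2>\deg V'$, Proposition~\ref{Hindry Lemma Criterion} (with $q'=1$) or Corollary~\ref{corhindry} forces $V'$ to have special components, so $V^\ast=V'$ and the descent stops. The degree bounds from Lemma~\ref{lemmahin} and Corollary~\ref{corhin} make this stopping time effectively controlled by $D,\Delta,c$.
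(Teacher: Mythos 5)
Your overall plan is genuinely Hindry-flavoured and close in spirit to the paper, but the step you yourself flag as "the main obstacle" is exactly where the proof lives, and your proposed resolution does not work. The quantity you would need — a bound on $\deg V^\ast$ (equivalently on the stage at which the descending intersection stabilises) that is \emph{uniform in $D,\Delta,c$, independent of $d$} — is not available by this method, and the truncation idea is circular: you assume that at some bounded stage a partial intersection $V'$ satisfies $[q](V')=V'$ with $q^2>\deg V'$, but producing such a $q$ (of controlled size, coprime to $d$ and to the characteristic) and controlling the degree growth up to that stage is precisely the content of Lemma \ref{lemma1}, and even there the bound unavoidably depends on $d$: the admissible multipliers must avoid the primes dividing $d$ and $p$, so the set $\Sigma$ and hence the degree bound involve $\omega(d)$ and the Jacobsthal function, giving $\deg \le D^{o(1)}d^{o(1)}$, not a function of $D,\Delta,c$ alone. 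With a $d$-dependent bound your final counting inequality $\varphi(n_0)/\#\{u:u^c=1\}\le \deg V^\ast$ only yields $n_0\le d^{o(1)}$, not $n_0\le\delta(D,\Delta,c)$. The paper removes the $d$-dependence only at the very end, by a bootstrap you do not have: it proves the coset version (Proposition \ref{keyprop2}, and Lemma \ref{lemma2} for torsion cosets $a+A'$), considers all cosets $L\cdot a_Y+B_Y$ squeezed between $L\cdot a$ and $V$, picks one of minimal order, reapplies the lemma to that coset, and uses minimality together with $d^{o(1)}<d$ for $d\ge\delta$ to conclude. Without this (or an equivalent device), your argument proves a weaker statement with $\delta$ depending on $d$.

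There are also smaller, repairable slips worth noting. The intersection defining $V^\ast$ must run over $c$-th powers $q=l^c$ with $l$ coprime to $dp$: for a general $q$ coprime to $d$ the identity $[q](L\cdot a)=L\cdot a$ is false (the image of $L$ in $(\Z/d\Z)^\times$ is only the subgroup of $c$-th powers), so as written $L\cdot a\subseteq V^\ast$ is unjustified. You only get $[q](V^\ast)\subseteq V^\ast$, not equality, so $[q_0]$ need not permute the components; you should pass to the stabilised image $W=[q^{k}](V^\ast)$ (which still contains $L\cdot a$ since $q^{k}$ acts through $L$ on $a$) before invoking Corollary \ref{corhindry}. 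Finally, $[l^c](W_1)$ is an irreducible closed subset of $V^\ast$ but need not itself be a component (it may sit inside a larger special component), so the inequality "number of distinct cosets $[l^c]\alpha_0+B$ $\le$ number of components of $V^\ast$" needs an argument; this matters because your order bound rests entirely on that count.
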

The strategy is to replace $V$ by a special subvariety $Z\subset V$, still containing $L\cdot a$, and of degree as small as possible. In this case one can hope that not only $L\cdot a$, but most of the subgroup of $A$ generated by $a$, which we denote by $<a>,$ is contained in $Z$ (and hence in $V$). 
\begin{rmk}
The proof implies a stronger statement concerning torsion cosets $a+A'$ of $A$, rather than just torsion points.
\end{rmk}

\subsection{Proof of Proposition \ref{keyprop}}\label{proofmainprop}
The proof consists of two lemmas, which are proven below. We first introduce some notation. Given a positive integer $d$, we write:
\begin{itemize}
\item $p(d)$ for the $d$-th prime number;
\item $\omega(d)$ for the number of distinct prime divisors of $d$;
\item $g(d)$ for the \emph{Jacobsthal's function}: $g(d)$ is the smallest number $M$ such that every sequence of $M$ consecutive integers contains an integer coprime to $d$.
\end{itemize}
The Jacobsthal's function plays a role in providing sharp estimates. The reader not interested in explicit estimates may replace these arguments by less precise, but easier, estimates that will be explained during the proofs, indeed we only need to know that $g(d)=d^{o(1)}$.

\begin{prop}\label{jacob}Let $d, d',a$ be positive integers and $d' | d$. If $a$ is prime to $d'$ and
\begin{equation} \label{invertible lift}
(a+kd' )\wedge d=1,
\end{equation}
for some $k\in\Z$, then we can find $k'\geq 0$ satisfying \eqref{invertible lift}, and such that
\begin{displaymath}
0\leq k'< g(d)=d^{o(1)}.
\end{displaymath}
In the case $0\leq a<d'$, we have $a+k'\cdot d' \leq  g(d)\cdot d'$.
\end{prop}
Iwaniec \cite{iw} proved that 
\begin{displaymath}
g(d)\leq C \cdot \omega(d)^2 \cdot  \log \omega(d)^2,
\end{displaymath}
for some unknown $C$. The best known explicit upper bounds
\begin{equation}\label{explicit}
g(d)\leq 2^{\omega(d)},  \text{  and} \qquad g(d)\leq 2 \omega(d)^{2+2e \log \omega(d)},
\end{equation}
are due to Kanold \cite{kanold} and Stevens \cite{stevens}. In particular, $g(d)=d^{o(1)}$

\begin{proof}[Proof of Proposition \ref{jacob}] Set $n=\frac{d}{d'}$. We are looking for the smallest $k' \in \N$ such that 
\begin{displaymath}
(a/d'+k') \wedge n=1
\end{displaymath}
where by $a/d'$ denotes $a\cdot d'^\ast$ for $d'^\ast+n\Z$ an inverse of $d'+n\Z$ in $\Z/(n)^\times$. By definition of $g(d)$, this happens for some $0\leq k'< g(d)$. 
\end{proof}

\subsubsection{First lemma}
The first step towards Proposition \ref{keyprop} consists in finding a subvariety $V'\subset V$ of degree $d^{o(1)}$. 
\begin{lemma}\label{lemma1}
 Let $V$ be a subvariety of $A$ of degree $D$ and dimension $\Delta$, and $A'$ and abelian subvariety of $A$ such that 
 \begin{equation}\label{Hypotheses2}
L\cdot a+A'\subseteq V.
\end{equation}
We have exactly one of the following.
\begin{itemize}
\item A component of maximal dimension $X$ of $V$ is a special subvariety, say of the form $\alpha+B$, for some torsion point $\alpha\in A$ and $B$ an abelian subvariety of $A$, such that for every $l\in L$, $l\alpha+B$ is a component of $V$ and the order of each $l\alpha+B$ is bounded by $D^{O(1)}\cdot d^{o(1)}$, as $d\to + \infty$, and 
\begin{displaymath}
L \cdot a+A' \subseteq L\cdot \alpha+B\subseteq V.
\end{displaymath}
\item There exists $V'\subset V$ such that $\dim(V')<\dim(V)$, $V'$ satisfies \eqref{Hypotheses2}, and
\begin{equation}\label{meet condition}
\text{every component of $V$ contains a component of $L\cdot a+A'$},
\end{equation}
and has degree bounded as follows:
\begin{equation}\label{bound V'}
\deg(V')\leq  D^{O(1)}\cdot d^{o(1)},
\end{equation}
as $d\to + \infty$.
\end{itemize}
\end{lemma}
More precisely and explicitly we prove that
\begin{equation}\label{def f D d}
\deg(V')\leq f(D,d):= D^2\cdot \left(p(\left\lceil{\sqrt[2c]{D}}\right\rceil+D+\omega(d)+1)^c\cdot g(d \cdot \max\{1,p\})\right)^{2\cdot c\cdot\Delta},
\end{equation}
where $\left \lceil{\sqrt[2c]{D}}\right\rceil$ denotes the smallest integer bigger or equal than $\sqrt[2c]{D}$.

Two observations that follow directly from the assumptions \eqref{Hypotheses2}-\eqref{meet condition}, and are also at the heart of Hindry's method:
\begin{itemize}
\item For every integer $m$ prime to $d=\ord (a+A')$, we have $V\supseteq  L\cdot a+A'=m^c\cdot L\cdot a+A'\subseteq [m^c](V)$. Hence
\begin{displaymath}
L\cdot a+A'\subseteq V\cap  [m^c](V);
\end{displaymath}
\item For every component $X$ of $V$, we have
\begin{displaymath}
\emptyset \neq X\cap (L\cdot a+A')\subseteq X\cap [m^c](V).
\end{displaymath}
In Lemma \ref{lemma1} we did not assume that \eqref{meet condition} holds, but the statements is still true, since we may discard a finite number of components of $V$, to assume that \eqref{meet condition} holds.
\end{itemize}

\begin{proof} 
Let
\begin{equation}\label{defx}
 x:=\left\lceil{\sqrt[2c]{D}}\right\rceil+D+\omega(d)+1, \text{  and  }n:=p(x).
\end{equation}
For simplicity one can also simply replace $x$ by $x_D:=2D+\omega(d)+1$, just by noticing that $x\leq x_D$. This is indeed what we will do in the second Lemma. Since $x$ is at least $4$, we can use some non-asymptotic bounds on the prime-counting function and write
\begin{equation}\label{eqonprimes}
n=p(x)\leq x \log (x)(1+\log\log(x)),
\end{equation}
see for example \cite[Thm. 3, page 69]{rosser1962}. Finally we set
\begin{displaymath}
N:=n^c\cdot g(d \cdot \max \{1,p\}),
\end{displaymath}
where $g(-)$ denotes the Jacobsthal's function, as introduced in Section \ref{proofmainprop}. The reason why the factor $\max \{1,p\}$ appears is to deal simultaneously the cases of zero and positive characteristic.

 Let $X$ be a component of $V$ of maximal dimension, and set
\begin{displaymath}
X'=X'(m):=X\cap [m^c](V),
\end{displaymath}
for some $m=m(X)$ chosen in the following set
\begin{equation}\label{setS}
\Sigma=\Sigma(d,D):=\{m^c \text{ such that }m\in\{1,\ldots , N	\}, m\wedge d=1, \text{ and }p\nmid m \}.
\end{equation}
Assume now that there exists $m^c\in \Sigma$ such that $\dim(X')<\dim(X)$. Then the degree formula from Section \ref{degreformulas}, namely \eqref{formuallemma}, and the fact that $m^c\leq N^c=n^{c^{2}} \cdot g(d\cdot \max \{1,p\})^c$, imply that
\begin{displaymath}
\deg(X')\leq \deg(X)\cdot \deg([m^c](V))\leq \deg(X)\cdot \deg(V)\cdot N^{2\cdot c\cdot \Delta}.
\end{displaymath}
Assume now that for each component $X$, of maximal dimension, there exists $m^c\in \Sigma$ such that $\dim(X'=X'(m))<\dim(X)$, and let $V'$ be the union the $X'$s and the lower dimensional components of $V$. In a similar way we can control the degree of $V'$:
\begin{displaymath}
\deg(V')\leq \deg(V)^2\cdot N^{2\cdot c\cdot \Delta}.
\end{displaymath}
The above, combined with the explicit bounds for the Jacobsthal's function discussed in Section \ref{proofmainprop}, give the second conclusion.

We are left to the case where the above assumption fails. That is the desired $m\in \Sigma$ can not be found for some $\Delta$-dimensional  component $X$ of $V$. In other words
\begin{equation}\label{eqqq}
X\subseteq \bigcap_{m^c\in \Sigma} [m^c](V).
\end{equation}
We show that, if this is the case, then the first conclusion holds. Assuming that $X$ satisfies \eqref{eqqq}, it is enough to prove that the following properties hold true:
\begin{enumerate}
\item $X$ is a special subvariety, say of the form $\alpha+B$, for some torsion point $\alpha\in A$ and $B$ an abelian subvariety of $A$;
\item The order of $X$ is bounded by $\ord(X)\leq n^c=d^{o(1)}$ (and the same applies for the order of $l\alpha+B$, for $l\in L$)\footnote{The estimate $n^c=d^{o(1)}$ follows from the more precise \eqref{eqonprimes}.};
\item For every $l\in L$, $l\alpha+B$ is a component of $V$.
\end{enumerate}
Letting $Z$ be the union of such components provides the first conclusion.
\begin{proof}[Proof of (1).]
From the set $\Sigma$, defined in \eqref{setS}, pick
\begin{displaymath}
q_1=(p_1)^c< q_2=(p_2)^c < \ldots< q_{D+1}=(p_{D+1})^c,
\end{displaymath}
such that
\begin{equation}\label{prime range bound}
 \sqrt{D}\leq q_1 < q_{D+1}\leq n^c,
\end{equation}
and $p_1,\ldots,p_{D+1}$ relatively prime to each other. For simplicity we can find these $p_i$'s among prime numbers, which are known to abound. We can actually choose the $q_i$'s among
\begin{displaymath}
\Sigma \cap \{p(\left\lceil{\sqrt[2c]{D}}\right \rceil)^c,\ldots,p(\left\lceil{\sqrt[2c]{D}}\right\rceil +D+\omega(d)+1)^c\},
\end{displaymath}
to take care that we might have at most $\omega(d)$ primes dividing $d$, and to exclude at most one more prime, accounting for the characteristic $p$ of the base field $k$. The lower bound in \eqref{prime range bound} then holds, as well as the upper bound, as $n$ was defined for this very purpose (the choice of $N$ will be clear later).

For every $q\in\{q_1,\ldots , q_{D+1}\}$ (or more generally in the set $\Sigma$) by assumption \eqref{eqqq}, we have
\begin{displaymath}
X\subseteq [q](V),
\end{displaymath}
and so there exists at least a component $Y(q)$ of $V$ such that
\begin{displaymath}
X\subseteq [q](Y(q)).
\end{displaymath}
Since $X$ is a component of maximal dimension, it follows that $X=[q](Y(q))$. As $q$ ranges through $\{q_1,\dots, q_{D+1}\}$ the numbers of distinct $Y(q_i)$ cannot be more than $\deg(V)=D$. That is, a collision must occur. We have
\begin{displaymath}
Y(q_{i_1})=Y(q_{i_2}),
\end{displaymath}
with $1\leq i_1\neq i_2\leq D+1$ (this is why we have chosen to work with $D+1$ numbers). Write $Y:=Y(q_{i_1})=Y(q_{i_2})$, we have
\begin{displaymath}
\deg(Y)\leq \deg(V)=D\qquad\text{and} \qquad [q_{i_1}](Y)=X=[q_{i_2}](Y).
\end{displaymath}
As we guaranteed that $D< (q_{i_1} q_{i_2})^2$, Proposition \ref{Hindry Lemma Criterion} implies that $Y$ is special. Hence so is $X$.
\end{proof}
This covers the first claim, and we are now ready to bound $\ord(X)$. 
\begin{proof}[Proof of (2)] Thanks to (1), we can find an abelian subvariety $B\subset A$ such that $Y=\alpha_Y+B,$
for some torsion point $\alpha_Y\in A$, and we may assume that $\ord (Y) =\ord (\alpha_Y)$ by the general assumptions. We then have
\begin{displaymath}
X=q_{i_1}\cdot \alpha_Y+B = q_{i_2}\cdot \alpha_Y+B.
\end{displaymath}
Necessarily we have the divisibility
\begin{displaymath}
\ord(Y)|\left|{q_{i_1}-q_{i_2}}\right|\leq n^c.
\end{displaymath}
On the other hand we know, for some $l\in L$, that $l a +A' \subset Y$, and so 
\begin{displaymath}
\ord(Y)|\ord(l\cdot a+A')=\ord(a+A')=d.
\end{displaymath}
Hence each $q$ in the set $\Sigma$ is relatively prime to $\ord(Y)$ (sine it is relatively prime to $d$). We deduce that $\ord(X)=\ord(Y) \leq n^c,$
concluding the proof of (2). 
\end{proof}
Set $d':=\ord(X)$. We are left to prove that for every $l\in L$, $l\alpha+B$ is a component of $V$. Namely, for any $1\leq j<d'$ such that $j \wedge d'=1$, we have
\begin{displaymath}
[j^c](X)\subseteq V.
\end{displaymath}
To take care of the fact that such a $j^c$, a priori, does not lie in the set $\Sigma$ (since $j\wedge d$ could be different from $1$), we use some results on the Jacobsthal's function $g(d)$, as discussed in Section \ref{proofmainprop}. It is only in this proof that we need to argue with $N$, rather than $n$.
\begin{proof}[Proof of (3)]
We choose an inverse $j^\ast+d' \Z$ of $j+d'\Z$, represented by some $1\leq j^\ast<d'$. By Proposition \ref{jacob} there is some $m$ such that $m\equiv j^\ast\pmod{d'}$, relatively prime to $d$ and not divisible by $p$, satisfying
\begin{displaymath}
m\leq d'\cdot g ((d \cdot \max \{1,p\})/d')\leq n^c\cdot g(d \cdot \max \{1,p\})=N,
\end{displaymath}
since, by (2), $d'\leq n^c$ (this explains our definition of $N$). Therefore $m^c$ belongs to the set $\Sigma= \Sigma (d,D)$, defined in \eqref{setS}. In particular we have
\begin{displaymath}
X=[m^c](Y(m^c)),
\end{displaymath}
for the component $Y=Y(m^c)$ of $V$. We know, for some $l\in L$, that $a_Y:=l\cdot a$ belongs to $Y$ and we can write $Y=a_Y+B,$
for some abelian subvariety $B$ of $A$. Therefore
\begin{displaymath}
X=[m^c](Y)=m^c\cdot a_Y+B.
\end{displaymath}
Applying $[j^c]$, the above identity becomes
\begin{displaymath}
[j^c](X)=[(jm)^c](a_Y+B)=(jm)^c\cdot a_Y+B.
\end{displaymath}
But $\ord(a_Y+B)=d'$ and $jm\equiv 1\pmod{d'}$, so $[j^c](X)=Y$
is indeed a component of $V$.
\end{proof}
The proof of Lemma \ref{lemma1} is completed.
\end{proof}

\subsubsection{Second lemma}
We refine the conclusion of Lemma \ref{lemma1}, constructing a special subvariety in $V$ of explicitly bounded degree.

\begin{lemma}\label{lemma2}
Under the same assumptions and notations of Proposition \ref{keyprop}, there is $ Z$ such that
\begin{displaymath}
L\cdot a+A' \subseteq Z\subseteq V,
\end{displaymath}
and of the form $L\cdot \alpha+B$, for some abelian subvariety $B\subset A$, and $\alpha \in A_{\operatorname{tors}}$, such that
\begin{displaymath}
\deg(Z)\leq D^{O(1)}\cdot d^{o(1)}.
\end{displaymath}
\end{lemma}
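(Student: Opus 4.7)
The plan is to iterate Lemma \ref{lemma1}. Set $V_0 := V$. At step $i$, apply Lemma \ref{lemma1} to $V_i$: either the first alternative holds and we directly obtain a special subvariety $L \cdot \alpha + B \subseteq V_i \subseteq V$ containing $L \cdot a + A'$, in which case we set $Z := L \cdot \alpha + B$ and stop; or the second alternative holds and we obtain $V_{i+1} \subseteq V_i$ with $\dim(V_{i+1}) < \dim(V_i)$ and $\deg(V_{i+1}) \leq f(\deg(V_i), d)$, where $f$ is the explicit function in \eqref{def f D d}. A crucial point is that Lemma \ref{lemma1} already guarantees that $V_{i+1}$ satisfies both \eqref{Hypotheses2} and \eqref{meet condition}, so that the iteration is well-posed and Lemma \ref{lemma1} may be reapplied.

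Since dimensions strictly decrease, the process terminates after at most $\Delta$ steps. If termination occurs through the first alternative, we are done. The only other scenario is that the iteration reaches some $V_i$ of dimension zero. In that degenerate case $V_i$ is a finite set, and since an abelian subvariety of positive dimension cannot embed into a finite set, necessarily $A' = 0$; we may then simply take $Z := L \cdot a$, of the required form $L \cdot \alpha + B$ with $\alpha := a$ and $B := 0$.

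What remains is controlling $\deg(Z)$. From the explicit formula \eqref{def f D d}, combined with the prime-counting estimate \eqref{eqonprimes} and the sub-polynomial bound $g(d) = d^{o(1)}$ (via Iwaniec's theorem, or the explicit bounds \eqref{explicit}), one checks that
\begin{displaymath}
f(D', d) \leq (D')^{C} \cdot d^{o(1)}
\end{displaymath}
for some constant $C$ depending only on $c$ and $\Delta$. Iterating at most $\Delta$ times yields $\deg(V_i) \leq D^{C^{\Delta}} \cdot d^{o(1)}$, which is of the required form $D^{o(1)} \cdot d^{o(1)}$ in the notational conventions of the paper. The final $Z = L \cdot \alpha + B$ delivered in the first alternative of Lemma \ref{lemma1} applied to $V_i$ satisfies
\begin{displaymath}
\deg(Z) \leq \ord(\alpha + B) \cdot \deg(B) \leq \deg(V_i)^{1 + o(1)} \cdot d^{o(1)},
\end{displaymath}
giving the claimed estimate.

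The main obstacle is really the bookkeeping: propagating the hypotheses through the iteration (already guaranteed by the explicit form of Lemma \ref{lemma1}) and verifying that the iterated composition $D' \mapsto (D')^{C} \cdot d^{o(1)}$, after a bounded number of steps, remains of the form $D^{o(1)} \cdot d^{o(1)}$. No new geometric ingredients beyond Lemma \ref{lemma1} are needed.
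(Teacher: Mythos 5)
Your proposal is correct and follows essentially the same route as the paper: iterate Lemma \ref{lemma1} at most $\Delta$ times (using that it propagates \eqref{Hypotheses2} and \eqref{meet condition}) and track degrees through the explicit function $f$ of \eqref{def f D d}, exactly as in the paper's inductive definition of the $V_i$ and $f_i(D)$. The only point to add is in your zero-dimensional terminal case: you must also bound $\deg(Z)=\#(L\cdot a)$, which a priori can be of size comparable to $\varphi(d)$ and hence not $d^{o(1)}$; this is immediate since $L\cdot a\subseteq V_i$ gives $\deg(Z)\leq \deg(V_i)\leq D^{o(1)}\cdot d^{o(1)}$ (equivalently, note that at dimension zero the second alternative of Lemma \ref{lemma1} cannot occur, so its first alternative applies, which is how the paper terminates).
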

The proof of the above result is obtained by induction from Lemma \ref{lemma1}. Notice that, for every $l \in L$, we have
\begin{displaymath}
\ord (l\alpha + B)\leq \deg (Z)\leq D^{O(1)}\cdot d^{o(1)},
\end{displaymath}
which is exactly what we need. During the proof we will actually prove that
\begin{displaymath}
\deg(Z)\leq  \max\{D, (\omega(d)+1)\}^{\delta(\Delta,c)} \left(2\cdot g(d\cdot \max\{1,p\})\right)^{\delta'(\Delta,c)},
\end{displaymath}
where $\delta(\Delta,c)$ and $\delta'(\Delta,c)$ are explicit constant depending only on $\Delta$ and $c$ (see \eqref{estimate D prime} for their definition).

\begin{proof}
We define inductively
\begin{displaymath}
f_0(D)=D,\qquad f_{i+1}(D)=f(f_i(D),d),
\end{displaymath}
where $f(D,d)= D^2\cdot \left(n^c\cdot g(d \cdot \max\{1,p\})\right)^{2\cdot c\cdot\Delta}$ was defined in \eqref{def f D d}, for $n=p(x)$, and $x=x_D:=2D+\omega(d)+1$. For simplicity, from now on, we ignore the factor $\max\{1,p\}$ in the argument of the Jacobsthal's function. For example, we have
\begin{displaymath}
f_2(D,d)=(D^2n^{2c^2\Delta}g(d)^{2c\Delta})^2({n_2}^cg(d))^{2c\Delta}=D^2n^{2c^2\Delta} {n_2}^{2c^2\Delta}g(d)^{6c\Delta},
\end{displaymath}
where $n_2=p(x_2)$, and 
\begin{displaymath}
x_2:=x_{f(D,d)}=2f(D,d)+\omega(d)+1= 2D^2\cdot \left(n^c\cdot g(d)\right)^{2\cdot c\cdot\Delta}+\omega(d)+1.
\end{displaymath}
Set also
\begin{displaymath}
V_0=V,\qquad V_{i+1}={V_i}',
\end{displaymath}
with ${V_i}'$ to be $V'$ constructed from Lemma \ref{lemma1} applied to the $V_i$. The latter is defined until $i_0$, for some $i_0$, where the first point of Lemma \ref{lemma1} occurs, and in this case we obtain $Z$ with $\deg(Z)\leq \deg(V_{i_0})$ and of the form $L\cdot \alpha+B$,
for some abelian subvariety $B\subset A$, and $\alpha \in A_{\operatorname{tors}}$. This has to happen for some $i_0\leq \Delta$, since at each step the dimension is dropping by one, and each $V_i$ is not empty.

The functions $f_i(D)$ are increasing in both $D$ and $i$, and so we can carry over upper bounds and rely on the computation below. For example, for any $i$, we get
\begin{displaymath}
\deg(V_i)\leq f_i(D)\leq  D':=f_\Delta(D),
\end{displaymath}
which will be bounded in an explicit way in \eqref{estimate D prime} below.

Set $D_\ast:=\max\{D, \omega(d)+1\}$, and notice that
\begin{displaymath}
4 \leq x=2D+\omega(d)+1\leq 3 D_\ast.
\end{displaymath}
In particular we can write
\begin{displaymath}
n=p(x)\leq x\log(x)(1+\log \log (x))\leq 3D_\ast\log(3D_\ast)(1+\log \log (3D_\ast)) \leq {D_\ast}^{1+\epsilon},
\end{displaymath}
for some $\epsilon>0$, and 
\begin{displaymath}
f_1(D)=f(D,d)= D^2\cdot n^{2c^2 \Delta} \cdot g(d)^{2\cdot c\cdot\Delta}\leq {D_\ast}^{2(1+c^2\cdot \Delta\cdot (1+\epsilon))}\cdot g(d)^{2\cdot c\cdot \Delta}.
\end{displaymath}
To bound the $f_i(D)$, for $1<i\leq \Delta$ we argue inductively. A direct computation shows that
\begin{align}\label{estimate D prime}
f_\Delta(D)=D'&\leq {D_\ast}^{{2^\Delta}(1+\lambda)}\cdot (2\cdot  g(d))^{2^{(\Delta+1)}c\Delta}&\text{ with }&\lambda=c^2\cdot \frac{\Delta^2-\Delta}{2}\cdot(1+ \epsilon ),
\\
&=
 \max\{D, (\omega(d)+1)\}^{\delta } \cdot (2\cdot g(d))^{\delta '}
&\text{ with }&
\left\{
\begin{aligned}
\delta=\delta(\Delta,c)&={2^\Delta}(1+\lambda);\\
\delta'=\delta'(\Delta,c)&=2^{\Delta}\lambda/c.
\end{aligned}
\right.
\end{align}
where $\epsilon=0.5$ will always do. We have therefore proven the desired bound
\begin{displaymath}
\deg(Z)\leq D^{O(1)}\cdot d^{o(1)}.
\end{displaymath}

\end{proof}

\subsubsection{Proof of Proposition \ref{keyprop}}
Let $V$ be a subvariety of $A$ of degree $D$ and dimension $\Delta<g$, and $a\in \Ators$, of order $d$, such that $L\cdot a\subseteq V$, where $L=L(c)$ is the Lang group as in \eqref{langdef}. Consider the set $\Theta$ of subvarieties of $V$ of the form
\begin{displaymath}
Y=L\cdot a_Y+B_Y,
\end{displaymath}
where $a_Y\in \Ators$ and $B_Y$ is an abelian subvariety of $A$, such that $L\cdot a \subseteq Y.$
Notice that $a\in \Theta$, and in particular $\Theta\neq \emptyset$. We can choose a $Y_{\min} = L\cdot {a}_{\min} + B_{\min} $ such that the order of the coset $a_{\min}+B_{\min}$ is minimal among the elements in $\Theta$. Lemma \ref{lemma2}, applied to the pair $(a_{\min}+B_{\min}\in \Ators, V)$ asserts that there exists $Z=L \alpha_Z + B_Z$, containing $Y_{\min}$, and of order bounded by $d^{o(1)}$, and implicit constant depending only on $D, \Delta,c$\footnote{Here is where we needed a version of Lemma \ref{lemma2} for torsion cosets, rather than for just torsion points.}. Let $\delta=\delta(D, \Delta,c)$ be the biggest number such that $d^{o(1)}<d$, for all $d \geq \delta$. My minimality of the order of $a_{\min}$, it follows that
\begin{displaymath}
\ord (a_{\min})\leq \min \{\delta,d\}
\end{displaymath}
and therefore $Y_{\min}= L\cdot {a}_{\min} + B_{\min}$ satisfies the desired conclusion. Proposition \ref{keyprop} is proven.

\subsection{Proof of Theorem \ref{thm22}, and Theorem \ref{mainthm}}\label{proofhor}
We are ready to prove Theorem \ref{thm22}, which, as explained in Section \ref{langsection}, implies Theorem \ref{mainthm}. Let $\mathcal{A}$ be an abelian scheme over $\Oo_{K,S}$. 

\begin{proof}[Proof of horizontal part of Theorem \ref{thm22}]
Let $(E_n)_{n\geq 0} \subset \mathcal{A}$ be a sequence of special 0-cycles associated to $a_n \in A_{\operatorname{tors}}$ and $\mathfrak{p}_n\in \Spec (\Oo_{K,S})$. Consider the image of the $a_n$ along the corresponding map 
\begin{displaymath}
\pi_n: A(\overline{K})\to \mathcal{A}_{\mathfrak{p}_n}(\overline{\kappa(\mathfrak{p}_n)}).
\end{displaymath}
Denote by $d_n$ the order of $a_n\in A$, and by $d_n'$ the order of $\pi_n(a_n)$ in $\mathcal{A}(\overline{\kappa(\mathfrak{p}_n}))$. 

We study $\mathcal{V}$, the Zariski closure of $\bigcup_{n\geq 0} E_n$ in $\mathcal{A}$. It is a finite union of vertical fibres and horizontal components (i.e. flat over the base  $\operatorname{Spec}(\Oo_{K,S})$). We may assume that $d_n$ and $d_n '$ tend to infinity, as $n$ grows, since otherwise $\mathcal{V}$ is special (by definition). Notice that, for any $n\geq 0$, we have
\begin{displaymath}
L\cdot \pi_n(a_n)=\pi_n(L\cdot a_n)\subseteq E_n.
\end{displaymath}

We first look at the horizontal/flat part, even if the same argument applies to the vertical case, so, up to extracting a subsequence, we may assume that $|\mathfrak{p}_n| \to + \infty$. Set $D$ for the maximum, over $n$, of $\deg (V_{\mathfrak{p}_n})$, $\Delta:=\dim (V_K)$, and let $\delta$ for the maximum of $\delta(\deg (V_{\mathfrak{p}_n}),\Delta,c)$ be as in Proposition \ref{keyprop} applied to the pair $\pi_n(a_n)$ and $V_{\mathfrak{p}_n}\subset \mathcal{A}_{\mathfrak{p}_n}$ (it is a finite set, since $\deg (V_{\mathfrak{p}_n})$ takes only finitely many values as $n$ varies). We can find a positive integer $q$ such that
\begin{itemize}
\item $q$ is a $c$-th power;
\item $q$ is relatively prime to $\delta $;
\item $q$ is relatively prime to $p_n:=\operatorname{char}(\kappa (\mathfrak{p}_n))$, for all $n$ bigger than a given constant $M$.
\end{itemize}
Proposition \ref{keyprop} asserts that, for each torsion point $\pi _n (a_n) \in \mathcal{A}_{\mathfrak{p}_n}$, we have
\begin{displaymath}
L\cdot \pi _n (a_n) \subseteq \left( L\cdot\alpha_n+B_n \right) \subseteq  \mathcal{V},
\end{displaymath}
for some $\alpha_n \in \mathcal{A}_{\mathfrak{p}_n}$, with $\ord(\alpha_n + B_n)\leq \delta$ (in particular the order of $\alpha_n$ does not depend on $d_n'$, nor on $p_n$), and $B_n \subset  \mathcal{A}_{\mathfrak{p}_n}$ an abelian subvariety. Notice that 
\begin{displaymath}
L\cdot\alpha_n+B_n
\end{displaymath}
is stable under the multiplication by $[q]$ (by definition of $L$). In particular $\mathcal{V}$ can be written as the Zariski closure of the set
\begin{displaymath}
\left( \bigcup_{n\geq M} L\cdot\alpha_n+B_n \right)\subseteq \mathcal{V}.
\end{displaymath}
As explained above, the left hand side is stable under multiplication by $[q]$, and so $\mathcal{V}$ itself is stable under $[q]$. Corollary \ref{corhindry}, applied to $\mathcal{V}_K\subseteq A$, shows that $\mathcal{V}_K$ is special. This means, following the definitions introduced in Section \ref{mainresultsec}, that the horizontal part of $\mathcal{V}$ is a finite union of flat cycles.
\end{proof}
To conclude the proof of Theorem \ref{thm22}, we have to show that the vertical part of $\mathcal{V}$ is a finite union of torsion cycles, essentially by adapting the argument just presented. The lifting argument is more delicate, and presents some challenges depending on the order of the torsion points $a_n$, this will be dealt with in the next section.

\begin{proof}[Proof of the Weak vertical part of Theorem \ref{thm22}]
We are left to consider the case where $(E_n)_{n\geq 0} \subset \mathcal{A}$
is a sequence of special 0-cycles associated to $(a_n)_n \subset A_{\operatorname{tors}}$ and a fixed $\mathfrak{p}\in \Spec (\Oo_{K,S})$. Consider the image of the $a_n$ along the corresponding map $\pi_{\mathfrak{p}}: A(\overline{K})\to \mathcal{A}_\mathfrak{p}(\overline{\kappa(\mathfrak{p})}).$
 Let $d_n'$ be the order of $\pi_{\mathfrak{p}}(a_n)\in \mathcal{A}_\mathfrak{p}(\overline{\kappa(\mathfrak{p})})$. If the $d_n'$ are bounded as $n$ goes to infinity, then the set $\bigcup_{n\geq 0} E_n$
is finite and so there is nothing to prove. We may therefore assume that the $d_n'$ are unbounded.

The argument described above implies that every component of $\mathcal{V}$ is a translate of an abelian subvariety of $\mathcal{A}_\mathfrak{p}$. Indeed, let $c$ be the constant associated to $A$ by Theorem \ref{lang}, and let $\delta= \delta(\deg (\mathcal{V}),\dim (\mathcal{V}) ,c)$ be the constant given by Proposition \ref{keyprop} applied to $\mathcal{V} \subset \mathcal{A}_\mathfrak{p}$ (and any $\pi_{\mathfrak{p}}(a_n)$). We can find a positive integer $q$ such that $q$ is a $c$-th power, and $q\wedge \delta  p=1$. Proposition \ref{keyprop} asserts that, for each torsion point $\pi_{\mathfrak{p}}(a_n) \in \mathcal{A}_\mathfrak{p}$, we have
\begin{displaymath}
L\cdot \pi_{\mathfrak{p}}(a_n)\subseteq \pi_{\mathfrak{p}} (L \cdot a_n) \subseteq \left( L\cdot\alpha+\mathfrak{B}_n \right) \subseteq  \mathcal{V},
\end{displaymath}
for some $\alpha \in \mathcal{A}_\mathfrak{p}(\overline{\kappa(\mathfrak{p})})$, with $\ord(\alpha + \mathfrak{B}_n)\leq \delta$, for some $\delta$ depending only on $D,\Delta$ and $c$ (where $\mathfrak{B}_n$ is some abelian subvariety of $\mathcal{A}_\mathfrak{p}$). Notice that $L\cdot\alpha+\mathfrak{B}_n$ is stable under the multiplication by $[q]$. Since $\mathcal{V}$ is defined as the Zariski closure of the $E_n$, and from the fact that $ \delta$ does not depend on $d_n'$, it follows that $\mathcal{V}$ is stable under the multiplication by $[q]$. Proposition \ref{Hindry Lemma Criterion} then shows that $\mathcal{V}$ is special, that is a finite union of translates by a (torsion) point of $\mathcal{A}_\mathfrak{p}(\overline{\kappa(\mathfrak{p})})$ of abelian subvarieties of $ \mathcal{A}_\mathfrak{p}$. The proof of Theorems \ref{mainthm} and \ref{thm22} is completed.
\end{proof}
The last statement will be referred to as the \emph{weak vertical part} of the arithmetic Manin--Mumford. The \emph{stronger} part, that is the lift of the torsion cosets of $\mathcal{A}_\mathfrak{p}$ to $K$, possibly up to $p$-rank zero factors, is described in the next two sections.

\section{Strong vertical, Theorem \ref{mainthmlift}}\label{liftingcycles}
In this section we prove Theorem \ref{mainthmlift}, which may be of independent interest from the arguments presented so far. As in Section \ref{mainresultsec}, $K$ denotes a number field and $S$ a finite set of finite places of $K$. To explain the method of the proof we first prove the following easier statement, which appeared in the introduction as Theorem \ref{lastthm}. It can be thought as the analogue of the main theorem of Edixhoven and Richard \cite{basrod} for abelian varieties (rather than $\A^1\times_{\Z} \A^1$).
\begin{thm}\label{proplift}
Fix a prime $\mathfrak{p}\subset \Oo_{K,S}$ of residue characteristic $p>0$. Let $\mathcal{A}$ be an abelian scheme over $\Oo_{K,S}$, $(a_n)_n\subset A_{\operatorname{tors}}$ be a sequence of torsion points of order coprime with $p$ and write $E_n=\pi_{\mathfrak{p}}(\Gal(\overline{K}/K) \cdot a_n)$, where $
\pi_{\mathfrak{p}}: A(\overline{K})\to \mathcal{A}_\mathfrak{p}(\overline{\kappa(\mathfrak{p})})$.
Then every component of the Zariski closure $\mathcal{V}$ of 
\begin{displaymath}
\bigcup_{n\geq 0} E_n\subseteq \mathcal{A}_\mathfrak{p}\subset \mathcal{A},
\end{displaymath}
is a component of the intersection between a flat cycle $\mathcal{B}\subseteq \mathcal{A}$ and $\mathcal{A}_\mathfrak{p}$.
\end{thm} 
The novelty of Theorem \ref{proplift}, compared to Theorem \ref{mainthm}, is that the components of $\mathcal{V}$ are not only given by torsion cycles of $\mathcal{A}_\mathfrak{p}$, but they actually come from characteristic zero. The case where torsion points are allowed to have order divisible by $p$ will be addressed in Section \ref{padicsection}. From now on, we let $p$ be the characteristic of the residue field $\kappa(\mathfrak{p})$.

Before proving Theorem \ref{proplift}, we explain how it implies Theorem \ref{mainthmlift}.
\subsubsection{Proof of Theorem \ref{mainthmlift}}\label{conclusion}
Assuming Theorem \ref{proplift}, which will be proven in Section \ref{proofthm40111}, we show how to conclude the proof of the more general Theorem \ref{mainthmlift}, as stated in the introduction. Let $E_n$ be a sequence of special zero cycles in $\mathcal{A}$, each one associated to $a_n\in \Ators$ and primes $\mathfrak{p}_n\subset \Oo_{K,S}$.

Let $\mathcal{V}\subseteq \mathcal{A}$ be the Zariski closure of the $(E_n)_n$,
and write it as a finite union of flat and vertical components. From Theorem \ref{mainthm}, which was proven in Section \ref{proofhor}, we know that the flat part is a finite union of flat cycles, and that each vertical component at $\mathfrak{p}_1,\dots , \mathfrak{p}_m \subset \Oo_{K,S}$ is a torsion cycle of some $A_{\mathfrak{p}_j}$ for $j\in \{1,\dots ,m\}$. Theorem \ref{proplift}, applied at $\mathfrak{p}_1,\dots, \mathfrak{p}_m$, now implies that each torsion cycle is given as the intersection between a flat cycle and vertical fibres. That is $\mathcal{V}$  can be written as a finite union of flat cycles and intersections between flat cycles and vertical fibres. This concludes the proof of Theorem \ref{mainthmlift}.

\subsection{Variants of the Tate conjecture (after Faltings et. al.)}\label{sectiontate}
We recall Faltings's results about the \emph{semi-simplicity of the Tate module} and the \emph{Isogeny Theorem} (or often referred to as the \emph{Tate conjecture}) and some variants thereof. 
\begin{thm}\label{tatethm}
Let $K$ be a number field, and $A$ an abelian variety over $K$. 
\begin{itemize}
\item (Faltings \cite{faltings}.) Let $\ell$ be a rational prime, and $V_\ell(A)$ the rational $\ell$-adic Tate module of $A$. The canonical map
\begin{equation}\label{tateconj1}
\End_K(A)\otimes_{\Z} \Q_\ell \to \End_{\Gal(\overline{K}/K)}(V_\ell(A))
\end{equation}

is an isomorphism. 
\item (Faltings, Zarhin  \cite[Sec. 5.4]{zarhin}.) For any $\ell$ large enough, the canonical map
\begin{equation}\label{tateconj}
\End_K(A)\otimes \Z/\ell\Z\to \End_{\Gal(\overline{K}/K)}(A[\ell])
\end{equation}
is an isomorphism. 
\item (Masser-W\"{u}stholz \cite{mw, mwisogeny}.) For any positive integer $m$, the cokernel of the canonical map 
\begin{equation}\label{mwthm}
\End_K(A) \to \End_{\Gal(\overline{K}/K)}(A[m])
\end{equation}
is killed by $M$, for some positive integer $M\leq c \left(\max\{1, h(A)\} \right)^\tau$, where $\tau$ is a constant depending only on $[K:\Q]$, and $c$ depends only on $\dim A$ and $[K:\Q]$.
\item (\cite[Thm. 4.2, Ch. IV (page 147)]{zbMATH00047975}) Let $(\ell_n)_{n \in \N}$ be a sequence of distinct prime numbers and let $R= \prod_{n\in \N} \Ff_{\ell_n}$. Since $R\otimes_{\Z} \Q$ is a non-zero\footnote{The kernel of the map $R\to R\otimes_{\Z} \Q$ is the set of elements $r\in R$ such that $M \cdot r=0$ for some $M \in \Z_{>1}$. Let $r=1\in R$, and $M$ arbitrary. There exists $\ell_n \nmid M$, and, as $M \cdot 1\neq 0$ in $\Ff_{\ell_n}$, we have $M \cdot r \neq 0 \in R$. Thus $r$ has non zero image in $R\otimes \Q$.} $\Q$-algebra, there exists a field $E$ (of characteristic zero) and a ring map $R\otimes_{\Z} \Q\to E$ corresponding to some maximal ideal. Let $V_E(A):= T (A)\otimes _{\widehat{\Z}} R \otimes_{\Q} E$ be the $E$-Tate module of $A$. The canonical map
\begin{equation}\label{etate}
\End_K(A)\otimes_{\Z} E \to \End_{\Gal(\overline{K}/K)}(V_E(A))
\end{equation}
is an isomorphism.
\end{itemize}
Moreover $V_\ell(A),A[\ell], V_E(A)$ are semisimple representation of $\Gal(\overline{K}/K)$ (the second one, for $\ell$ large enough). 
\end{thm}

\subsection{Preliminary results}\label{keypropo}
\subsubsection{First formal descent lemma}
We start with a formal descent lemma for idempotents in scalar extensions of semisimple algebras.
\begin{lemma}\label{keylemma}Let $M, N$ be semisimple (associative, non necessarily commutative, finite dimensional) $\Q$-algebras, with $M$ a subalgebra of $N$. Let $E/\Q$ be a field extension, and
\begin{displaymath}
i_E: N\otimes_\Q E\hookrightarrow \End_E(V)
\end{displaymath}
be a faithful representation on $V$, a finite dimensional $E$-linear vector space. If $u$ is an idempotent of $N$, and $w\in M\otimes_\Q E$ is such that $i_E(w)\cdot V\subseteq i_E(u)\cdot V$, then there exists an idempotent $v\in M$ satisfying
\begin{displaymath}
i_E(w)\cdot V\subseteq i_E(v)\cdot V\subseteq i_E(u)\cdot V.
\end{displaymath}
\end{lemma}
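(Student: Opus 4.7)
My plan is to translate the geometric inclusions into algebraic identities inside $N \otimes_\Q E$ and then exploit semisimplicity of $M$ to descend an idempotent from $M \otimes_\Q E$ down to $M$ itself. By faithfulness of $i_E$ and the fact that $u \in N$ is idempotent, one has $i_E(u) V = \{x \in V : i_E(u)x = x\}$, so the hypothesis $i_E(w) V \subseteq i_E(u) V$ is equivalent to the single identity $uw = w$ in $N \otimes_\Q E$. Similarly, producing an idempotent $v \in M$ with $i_E(w) V \subseteq i_E(v) V \subseteq i_E(u) V$ is the same as producing an idempotent $v \in M$ satisfying $uv = v$ and $vw = w$ in $N \otimes_\Q E$.

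The key object to introduce is the right ideal $uN$ of $N$ and its intersection $J := uN \cap M$, which is a right ideal of $M$. Since $E$ is flat over $\Q$, tensor product commutes with intersection of $\Q$-subspaces, giving
\begin{displaymath}
J \otimes_\Q E \;=\; u(N \otimes_\Q E) \,\cap\, (M \otimes_\Q E).
\end{displaymath}
The hypothesis $uw = w$ together with $w \in M \otimes_\Q E$ places $w$ in the right-hand side, so $w \in J \otimes_\Q E$; in particular $J$ is nonzero.

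Semisimplicity of $M$ now concludes the argument: every right ideal of $M$ is generated by an idempotent, so $J = eM$ for some idempotent $e \in M$. From $e \in J \subseteq uN$ one immediately reads off $ue = e$, and from $w \in J \otimes_\Q E = e(M \otimes_\Q E)$ one reads off $ew = w$. Setting $v := e$ yields the required idempotent. The proof relies only on faithfulness of $i_E$, flatness of $E/\Q$, and the structural theorem for right ideals in a semisimple ring, so I do not anticipate any serious obstacle; the only insight needed is to identify $uN \cap M$ (rather than a submodule built directly from $w$) as the ideal whose $E$-tensor is forced to contain $w$.
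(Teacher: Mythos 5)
Your proof is correct and follows essentially the same route as the paper: both arguments reduce the hypothesis to $uw=w$ via faithfulness, form the right ideal $uN\cap M$ of $M$, write it as $vM$ for an idempotent $v\in M$ using semisimplicity, and check $uv=v$ and $vw=w$ after noting that $(uN\cap M)\otimes_\Q E = u(N\otimes_\Q E)\cap(M\otimes_\Q E)$ (you invoke flatness where the paper counts dimensions, an immaterial difference). Nothing further is needed.
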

In the proof  of Lemma \ref{keylemma} we use the following standard fact about semisimple $\Q$-algebras. Let $M$ be a semisimple associative finite dimensional $\Q$-algebra, then any $\Q$-linear subspace $X$ of $M$ is a right ideal if and only if it is of the form $vM$ for some idempotent $v$ of $M$. 
\begin{proof}[Proof of Lemma \ref{keylemma}]
Consider $Y=uN$, as a right $N$ linear submodule of $N$. Let $X:=Y\cap M$ be the associated right $M$-module. By the previous fact, there is at least one idempotent $v\in M$ with the following property: for any $m$ in $M$ we have $u\cdot m=m$ if and only if $v\cdot m=m$. Otherwise stated, we have $X=v\cdot M$. The same property holds true with elements in $M\otimes E$. It is indeed enough to notice, just by computing dimensions, that we have $(Y\otimes_\Q E) \cap (M\otimes_\Q E)= X \otimes_\Q E$.

As $\im (i_E(w) )$ is a submodule of $\im (i_E(u))$, by assumption, and $i_E(u)$ is an idempotent, we conclude that $i_E(u \cdot w)=i_E(w)$. By faithfulness of $i_E$, we have also $u\cdot w = w \in N\otimes_\Q E$.

Let $v\in M$ be the idempotent associated to $u \in N$ as above. It follows that 
\begin{displaymath}
i_E(v) \cdot i_E(w) =i_E(w),
\end{displaymath}
that is
\begin{displaymath}
\im (i_E (w))\subseteq \im(i_E (v)).
\end{displaymath}
 Finally, as $v\in vM\subset u N$, we get $u\cdot v=v$, and likewise 
\begin{displaymath}
\im (i_E(v))\subseteq \im(i_E(u)).
\end{displaymath}
In another writing, this is indeed
\begin{displaymath}
i_E(w)\cdot V\subseteq i_E(v)\cdot V\subseteq i_E(u)\cdot V.
\end{displaymath}
\end{proof}

\subsubsection{Applications}
We first present the simplest result we use to lift torsion cycles, it encompasses the power of \eqref{tateconj1}, making it ready to be applied for our purposes. Let $A$ be an abelian variety over $K$, $\mathcal{A}_\mathfrak{p}$ its reduction mod $\mathfrak{p}$, a place of good reduction. Set $p=\operatorname{char} (\kappa(\mathfrak{p}))>0$, and
\begin{displaymath}
M^0:=\End_K (A)\otimes_\Z \Q, \ \ \ N^0:=\End_{\kappa (\mathfrak{p})}(\mathcal{A}_\mathfrak{p})\otimes_{\Z} \Q.
\end{displaymath}
The reduction mod $\mathfrak{p}$ defines a canonical embedding $M^0\hookrightarrow  N^0,$
and, for every $\ell \neq p$, we have
\begin{displaymath}
M^0\otimes \Q_\ell \hookrightarrow N^0 \otimes \Q_\ell \hookrightarrow \End_{\Q_\ell} (V_\ell (A)),
\end{displaymath}
thanks to the identification $V_\ell(A)\cong V_\ell (\mathcal{A}_\mathfrak{p})$ given again by the reduction mod $\mathfrak{p}$.

\begin{prop}\label{lemmatolift}
 Let $W$ be a $\Gal(\overline{K}/K)$-invariant subspace of $V_\ell(A)$ and let $\mathfrak{B}\subseteq \mathcal{A}_\mathfrak{p}$ be an abelian subvariety such that
 \begin{displaymath}
 W \subseteq  V_\ell (\mathfrak{B}) \subseteq V_\ell (A). 
 \end{displaymath}
 There exists an abelian subvariety $D$ of $A$, defined over $K$, such that
 \begin{displaymath}
W \subseteq V_\ell (D)\subseteq  V_\ell (\mathfrak{B}).
\end{displaymath} 
\end{prop}
Notice that Proposition \ref{lemmatolift} can be rewritten in purely algebraic therms. Here $M^0$ and $N^0$ are finite dimensional semisimple $\Q$-algebras. Moreover $ \mathfrak{B}\subseteq \mathcal{A}_\mathfrak{p}$ corresponds to an idempotent $u=u^2 \in N^0$ such that
\begin{displaymath}
W\subseteq u\cdot V_\ell(A),
\end{displaymath}
and Proposition \ref{lemmatolift} asserts that there exists an idempotent  $v$ in $M^0$ such that
\begin{displaymath}
W\subseteq v\cdot V_\ell(A)\subseteq u\cdot V_\ell(A).
\end{displaymath}

\begin{proof}[Proof of Proposition \ref{lemmatolift}]Since $V_\ell (A)$ is a semisimple Galois representation, as explained at the end of Theorem \ref{tatethm}, we have that $W \subseteq V_\ell(A)$ is of the form $e \cdot V_\ell (A)$ for some idempotent $e \in \End_{\Gal (\overline{K}/K)}(V_\ell(A)).$ By the Tate conjecture (\eqref{tateconj1} in Theorem \ref{tatethm}) we can actually write
\begin{displaymath}
W = w \cdot V_\ell (A)
\end{displaymath}
for some $w\in M^0\otimes_\Q\Q_\ell$ (and such $w$ is unique by faithfulness). Lemma \ref{keylemma}, applied with $E=\Q_\ell$, shows how to get $v$ in $M^0$ (rather than in $M^0\otimes_\Q \Q_\ell$) such that the conclusion holds.
\end{proof}

Replacing in the proof of Proposition \ref{lemmatolift} the role of the $\ell$-adic Tate conjecture by the \emph{adelic Tate conjecture} \eqref{etate}, we obtain:
\begin{prop}\label{lemmatoliftadelic}
 Let $(\ell_n)_{n\geq 0}$ be a sequence of distinct primes different from $p$, and let $E$ be the field associated to $\Q \otimes \prod_n \Ff_{\ell_n}$ as in \eqref{etate}.  Let $W$ be a $\Gal(\overline{K}/K)$-invariant subspace of $V_E(A)$ and let $\mathfrak{B}\subseteq \mathcal{A}_\mathfrak{p}$ be an abelian subvariety such that
 \begin{displaymath}
 W\subseteq V_E (\mathfrak{B})\subseteq V_E (A).
 \end{displaymath}
 There exists an abelian subvariety $D$ of $A$, defined over $K$, such that
 \begin{displaymath}
W\subseteq V_E(D)\subseteq V_E(\mathfrak{B}).
\end{displaymath} 
\end{prop}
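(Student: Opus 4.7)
The plan is to prove Proposition \ref{lemmatoliftadelic} by following the argument of Proposition \ref{lemmatolift} line by line, with the $\ell$-adic input Theorem \ref{tateconj1} replaced throughout by its $E$-coefficient version Theorem \ref{etate}. The algebraic engine Lemma \ref{keylemma} is already formulated for an arbitrary field extension $E/\Q$, so it will apply directly and no new algebraic input is needed.

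Concretely, I would proceed as follows. Since every $\ell_n$ differs from $p$, the reduction map identifies $V_E(A)$ with $V_E(\mathcal{A}_\mathfrak{p})$, and under this identification $V_E(\mathfrak{B})$ becomes $u\cdot V_E(A)$ for an idempotent $u\in N^0:=\End_{\kappa(\mathfrak{p})}(\mathcal{A}_\mathfrak{p})\otimes_\Z\Q$ produced by Poincaré reducibility on $\mathcal{A}_\mathfrak{p}$. The semisimplicity half of Theorem \ref{etate} presents $W$ as $e\cdot V_E(A)$ for some idempotent $e\in\End_{\Gal(\overline{K}/K)}(V_E(A))$, and the isomorphism half identifies $e$ with an element $w\in M^0\otimes_\Q E$, where $M^0:=\End_K(A)\otimes_\Z\Q$. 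Reduction at $\mathfrak{p}$ yields the inclusion $M^0\hookrightarrow N^0$ of semisimple $\Q$-algebras, and Lemma \ref{keylemma}, applied to the tuple $(M,N,E,V,w,u)=(M^0,N^0,E,V_E(A),w,u)$, produces an idempotent $v\in M^0$ with $W\subseteq v\cdot V_E(A)\subseteq u\cdot V_E(A)=V_E(\mathfrak{B})$. Clearing denominators (pick $n\geq 1$ with $nv\in\End_K(A)$) and setting $D:=(nv)(A)$, Poincaré reducibility for $A/K$ exhibits $D$ as an abelian subvariety of $A$ defined over $K$ satisfying $V_E(D)=v\cdot V_E(A)$, which is the desired lift.

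The one hypothesis of Lemma \ref{keylemma} that demands a separate check, and which I expect to be the main (though mild) obstacle, is the faithfulness of the canonical map $N^0\otimes_\Q E\hookrightarrow\End_E(V_E(A))$: faithfulness of the corresponding map for $M^0\otimes_\Q E$ is part of Theorem \ref{etate}, but the analogous statement for $N^0$ is not literally in the excerpt. I would establish it as a direct analogue of Theorem \ref{etate} for the abelian variety $\mathcal{A}_\mathfrak{p}$ over the finite field $\kappa(\mathfrak{p})$, obtained from Tate's theorem for abelian varieties over finite fields combined with exactly the same formal step (quotienting $\bigl(\prod_n\Ff_{\ell_n}\bigr)\otimes_\Z\Q$ by a maximal ideal to obtain $E$, then invoking semisimplicity) that is used to derive Theorem \ref{etate} from Theorem \ref{tateconj}; if needed, one may also discard finitely many $\ell_n$ to avoid bad primes. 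Once this faithfulness is in place, the remainder of the argument is a routine transcription of the proof of Proposition \ref{lemmatolift}.
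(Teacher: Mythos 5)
Your proposal is essentially the paper's own argument: the paper proves Proposition \ref{lemmatoliftadelic} precisely by rerunning the proof of Proposition \ref{lemmatolift} with the $\ell$-adic Theorem \ref{tateconj1} replaced by Theorem \ref{etate}, then applying Lemma \ref{keylemma} over the field $E$, exactly as you do. Your separate check that $N^0\otimes_\Q E$ acts faithfully on $V_E(A)$ (needed as a hypothesis of Lemma \ref{keylemma}) correctly fills in a detail the paper leaves implicit, and your proposed justification for it is adequate.
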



\subsubsection{Combinatorial result}\label{sectiononcomblemma} We describe a combinatorial result that will be used in the proof of Proposition \ref{prop222}.
Let $\ell$ be a prime number, $\Ff:=\Ff_\ell$ the field with $\ell$ elements and $d$ a positive integer.
\begin{prop}\label{Lemme inclusion exclusion} 
Let $\Ff^d$ be a $d$-dimensional $\Ff$-vector space and let $a \in \Ff^d$. Let $G \subset \Gl_d(\Ff)$ be a subgroup. If $V\subseteq \Ff^d$ is a subvector space such that, for some $1 \leq C \leq \infty$,
\begin{displaymath}
\#\{G\cdot a\cap V\}\geq\frac{1}{C}\# \{G\cdot a\},
\end{displaymath}
then there exists a non trivial subvector space $W \subseteq V$ such that 
\begin{equation}\label{borne stabilisateur}
[G:\Stab_G(W)]\leq 3\cdot C^{4^{\dim(V)}}.
\end{equation}
Moreover $G\cdot  a \cap W \neq \emptyset$ and it generates $W$.
\end{prop}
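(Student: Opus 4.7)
The plan is to proceed by induction on $\dim V$, with the natural candidate $W := \langle G \cdot a \cap V \rangle \subseteq V$ in mind. Assuming $a \neq 0$, this $W$ is automatically non-trivial, meets $G \cdot a$, and satisfies $W = \langle G \cdot a \cap W \rangle$ tautologically; only the stabiliser bound might fail, and the inductive step is designed to replace $V$ by a proper subspace whenever it does. Throughout, I would write $N := \#(G \cdot a)$ and $s := \#(G \cdot a \cap V) \geq N/C$.

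The main step is a double-counting dichotomy on the $G$-orbit $\mathcal{V} := \{g V : g \in G\}$ of translates of $V$, of cardinality $r := [G:\Stab_G(V)]$. Because $G$ permutes $\mathcal{V}$ transitively while preserving $G \cdot a$, every $V' \in \mathcal{V}$ satisfies $\#(G \cdot a \cap V') = s$, so counting incidences in $\{(b, V') \in (G \cdot a) \times \mathcal{V} : b \in V'\}$ gives
\[
\sum_{b \in G \cdot a} d(b) \;=\; rs, \qquad d(b) := \#\{V' \in \mathcal{V} : b \in V'\}.
\]
Cauchy--Schwarz then yields $\sum_b d(b)^2 \geq (rs)^2/N$, which I would translate into a pigeonhole dichotomy: either \emph{(A)} $r$ is bounded polynomially in $C$ (say $r \leq 2C$), or \emph{(B)} some pair of distinct translates $V_1, V_2 \in \mathcal{V}$ has $\#(G \cdot a \cap V_1 \cap V_2) \geq N/(2C^2)$.

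In case (A), set $W := \langle G \cdot a \cap V \rangle$: the group $\Stab_G(V)$ preserves both $G \cdot a$ and $V$, hence the set $G \cdot a \cap V$ setwise, and therefore its span $W$, so $[G:\Stab_G(W)] \leq r$ is already within the required bound. In case (B), translate by a group element to assume $V_1 = V$; then $U := V \cap V_2$ is a proper subspace of $V$ with $\#(G \cdot a \cap U) \geq N/(2C^2)$, and I would apply the induction hypothesis to $U$ with the updated constant $C' := 2C^2$, which returns the desired $W \subseteq U \subseteq V$. Iterating the rule $C \mapsto 2C^2$ over at most $\dim V$ levels of descent compounds into a bound of the shape $3 \cdot C^{4^{\dim V}}$.

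The main obstacle is handling the degenerate regime in which $N$ is too small for the Cauchy--Schwarz step to bite, or where the recursion would bottom out at $\dim U = 0$. These situations should be absorbed by a direct base case: for any $b \in G \cdot a \cap V$, take $W := \Ff \cdot b$, which satisfies the spanning condition trivially and has $[G:\Stab_G(W)] \leq \#(G \cdot b) = N$, already within the target bound in this regime. The spanning conclusion $W = \langle G \cdot a \cap W \rangle$ for the recursively produced $W$ is inherited directly from the induction hypothesis on the smaller subspace $U$.
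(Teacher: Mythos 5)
Your overall strategy is viable and genuinely different from the paper's: you run an incidence/Cauchy--Schwarz dichotomy over the $G$-orbit $\{gV\}$ and descend to an intersection of two distinct translates, whereas the paper makes a single extremal choice of $W\subseteq V$ maximizing $\eps(W)^{4^{\dim W-\dim V}}$ and then bounds $[G:\Stab_G(W)]\leq 3/\eps(W)$ by an inclusion--exclusion estimate over about $1/\eps(W)+1$ cosets, with no recursion at all. Your dichotomy itself is sound: all translates $gV$ carry the same number $s$ of orbit points, the double count gives $\sum_b d(b)=rs$, and if $r\geq 2C$ some pair of distinct translates meets $G\cdot a$ in at least $s/(2C)\geq N/(2C^2)$ points; case (A) is also fine, since $\Stab_G(V)\subseteq\Stab_G(\langle G\cdot a\cap V\rangle)$.

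The genuine gap is the last step, where you assert without verification that iterating $C\mapsto 2C^2$ "compounds into a bound of the shape $3\cdot C^{4^{\dim V}}$". The compounding needs $2C^2\leq C^4$ at every level, which holds only for $C\geq\sqrt{2}$; for $1\leq C<\sqrt{2}$ the stated bound is not delivered. Concretely, for $C=1$ the target is $[G:\Stab_G(W)]\leq 3$, but once case (B) is entered your guaranteed data is only $C'=2$, and a subsequent case (A) termination yields the bound $2C'=4>3$ (and worse after further descents); nothing in your bookkeeping rules this out, so the inequality \eqref{borne stabilisateur} is not proved for $C$ near $1$. (A repair is available, e.g.\ using the trivial estimate $\#(G\cdot a\cap V_1\cap V_2)\geq (2/C-1)N$ for any two distinct translates when $C<2$, or simply proving the proposition with a different explicit function of $C$ and $\dim(V)$, which is all the application in the paper needs -- but as written the claimed bound is not established.) A secondary flaw: your fallback base case "take $W=\Ff\cdot b$, with $[G:\Stab_G(W)]\leq \#(G\cdot b)=N$" is not within the target bound in general, since $N$ is not controlled by $C$ and $\dim(V)$. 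Fortunately it is also unnecessary: in case (B) the intersection $U$ contains a nonzero orbit point (assuming $a\neq 0$), so $\dim U\geq 1$, and at dimension $1$ distinct translates of a line meet only at $0$, so case (A) holds automatically there; you should delete the fallback and instead make this termination argument, and fix the constant tracking for small $C$.
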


\begin{cor}
With the notation of Proposition \ref{Lemme inclusion exclusion}, in particular $W$ and $V$ contain the orbit $H \cdot g \cdot a$, for some $g\in G$, under the subgroup $H=\Stab _G(W) \subseteq G$, and $[G:H]\leq 3 \cdot C^{4^{\dim V}}$. 
\end{cor}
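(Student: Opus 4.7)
The plan is to proceed by strong induction on $\dim V$, with the crucial quantitative feature that each descent to a proper subspace of $V$ will cost a multiplicative factor of $2$ on the constant $C$: the induction hypothesis will be invoked at $\dim V - 1$ with $C$ replaced by $2C$. Since $(2C)^{4^{k-1}} \leq C^{4 \cdot 4^{k-1}} = C^{4^k}$ precisely when $C^3 \geq 2$, this explains the otherwise mysterious exponent $4^{\dim V}$ in the bound. The case $C = 1$ is trivial: $G \cdot a \subseteq V$, and $W := \mathrm{span}(G \cdot a)$ is then $G$-stable with $[G : \Stab_G(W)] = 1$; so I assume $C \geq 2$. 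Since $[G : \Stab_G(W)]$ depends only on the $G$-conjugacy class of $W$, one may at any point replace $V$ (and eventually $W$) by another element of its $G$-orbit, so it is enough to produce $W$ inside some translate $gV$ with the desired index and re-translate at the end. I also replace $V$ by $\mathrm{span}(V \cap G \cdot a)$, so that $V$ is spanned by its orbit points. The base case $\dim V = 1$ uses that distinct lines through the origin meet only at $\{0\}$: the disjointness $[G : \Stab_G(V)] \cdot (M - 1) \leq N$ yields $[G : \Stab_G(V)] \leq 2C$, well within $3 C^4$.

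For the inductive step at $\dim V = k \geq 2$, set $t := [G : \Stab_G(V)]$, $M := |V \cap G \cdot a|$, $N := |G \cdot a|$ (so $M \geq N / C$), and assume $t > 3 C^{4^k}$ (else $W = V$ works). The heart of the argument is a greedy construction producing a linearly independent sequence $x_1, x_2, \ldots$ in $G \cdot a$ (each lying inside the current $V$) together with a nested sequence $T_1 \supseteq T_2 \supseteq \cdots$ of subsets of $G / \Stab_G(V)$ such that $\mathrm{span}(x_1, \ldots, x_i) \subseteq gV$ for every $g \in T_i$ and such that $|T_{i+1}| \geq |T_i| / (2C)$. The first step uses the elementary double counting $\sum_g |gV \cap G \cdot a| = tM$, which by pigeonhole yields some $x \in G \cdot a$ lying in at least $t/C$ translates; translating $V$ to contain $x$ gives $x_1 := x$ and $T_1 := \{g : x_1 \in gV\}$ with $|T_1| \geq t/C$. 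At a general step, let $S_i := G \cdot a \cap \mathrm{span}(x_1, \ldots, x_i)$. If $|S_i| \leq M/2$, the identity $\sum_{g \in T_i} |gV \cap G \cdot a \setminus \mathrm{span}(x_1, \ldots, x_i)| = |T_i|(M - |S_i|) \geq |T_i| M / 2$ and pigeonhole over $y \in G \cdot a$ produce some $y \notin \mathrm{span}(x_1, \ldots, x_i)$ with $\#\{g \in T_i : y \in gV\} \geq |T_i| / (2C)$; a further translation of $V$ brings $y$ inside and furnishes $x_{i+1}$ and $T_{i+1}$.

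The construction terminates in exactly one of two ways. If at some step $i < k$ the unbalanced regime $|S_i| > M/2$ occurs, then $V' := \mathrm{span}(x_1, \ldots, x_i)$ is a proper subspace of (a translate of) $V$ with $|V' \cap G \cdot a| > N/(2C)$, and applying the induction hypothesis to the pair $(V', 2C)$ yields $W \subseteq V'$ with
\[
[G : \Stab_G(W)] \leq 3 (2C)^{4^{k-1}} \leq 3 C^{4^k},
\]
the last inequality being $2C \leq C^4$ for $C \geq 2$. Otherwise the construction reaches step $k$, the $x_j$ form a basis of $V$, so $\mathrm{span}(x_1, \ldots, x_k) = V \subseteq gV$ forces $g \in \Stab_G(V)$ and $|T_k| = 1$. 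Unwinding $|T_k| \geq t / (2^{k-1} C^k)$ gives $t \leq 2^{k-1} C^k$, contradicting the standing assumption $t > 3 C^{4^k}$ (since $3 C^{4^k} > 2^{k-1} C^k$ for $C \geq 2$ and $k \geq 1$). The normalization $V = \mathrm{span}(V \cap G \cdot a)$, preserved throughout the recursion, ensures that $G \cdot a \cap W$ is nonempty and generates $W$.

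The main technical obstacle is maintaining linear independence throughout the greedy construction: the raw pigeonhole only produces a point in many translates with no a priori guarantee of linear independence from the previously chosen ones. The case split on $|S_i|$ is precisely the device that resolves this — in the balanced regime ($|S_i| \leq M/2$), restricting the counting to $y \notin \mathrm{span}(x_1, \ldots, x_i)$ costs a factor of $2$, exactly the loss absorbed by the transition $C \mapsto 2C$; in the unbalanced regime ($|S_i| > M/2$), one passes to a genuinely lower-dimensional subspace of high orbit density, to which the induction applies cleanly.
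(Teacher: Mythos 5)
Your proposal does not follow the paper's route. In the paper the Corollary is an immediate consequence of Proposition \ref{Lemme inclusion exclusion}: since $G\cdot a\cap W\neq\emptyset$ one picks $g$ with $g\cdot a\in W$, and $H=\Stab_G(W)$ maps $W$ to itself, so $H\cdot g\cdot a\subseteq W\subseteq V$; the index bound is that of the Proposition, whose proof is an extremal argument (choose $W\subseteq V$ of minimal dimension among subspaces maximizing $\eps(W)^{4^{(\dim W-\dim V)}}$, so that every proper $W'<W$ satisfies $\eps(W')<\eps(W)^4$, then bound $[G:\Stab_G(W)]\leq 3/\eps(W)$ by inclusion--exclusion over roughly $1/\eps(W)+1$ translates of $S=W\cap G\cdot a$). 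You instead reprove the whole combinatorial statement by induction on $\dim V$, via a greedy pigeonhole construction of linearly independent orbit points lying in many translates $gV$, paying a factor $2$ at each descent. That scheme is essentially sound: the double-counting identities, the invariant $\mathrm{span}(x_1,\dots,x_i)\subseteq gV$ for $g\in T_i$, the termination analysis ($|T_k|=1$ forcing $t\leq 2^{k-1}C^k$, contradicting $t>3C^{4^k}$), and the conjugation-invariance of the index under re-translation are all correct; and the actual assertion of the Corollary follows from what you construct in one line (some $g\cdot a$ lies in $W$ and $H$ stabilizes $W$, hence $H\cdot g\cdot a\subseteq W\subseteq V$), a line you should still write down. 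Your argument is more algorithmic and gives a sharper bound when no descent occurs; the paper's is shorter and uniform in the parameters.

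The one genuine gap is the range of $C$. The Proposition allows any real $1\leq C\leq\infty$, but you treat only $C=1$ and $C\geq 2$; as you yourself observe, the inequality $(2C)^{4^{k-1}}\leq C^{4^k}$, on which the unbalanced-regime step rests, is equivalent to $C^3\geq 2$, and it fails for $1<C<2^{1/3}$. In that range your induction does not close and does not deliver the stated bound $3C^{4^{\dim V}}$: for $C$ close to $1$ the target is close to $3$, while your recursion can only produce something of size $3(2C)^{4^{k-1}}$. This does not affect the paper's application, where $C$ is a positive integer, but as a proof of the statement as written it leaves a hole. The paper's extremal choice, which loses a fourth power of the density (rather than a factor $2$) at each drop in dimension, is precisely what handles all real $C\geq 1$ uniformly; to repair your argument you would either restrict to $C^3\geq 2$ or run the induction on the density $\eps=\#(G\cdot a\cap V)/\#(G\cdot a)$ with a threshold of the form $\eps(W')<\eps(W)^4$ in place of the $M/2$ cut, which in effect reproduces the paper's weighting.
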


The proof is inspired by the arguments of \cite[Sec. 3]{zbMATH05837877}.

\begin{proof}[Proof of Proposition \ref{Lemme inclusion exclusion}]Given a subset $S\subseteq G\cdot a$ and $W\subseteq \Ff^d$, we set
\[
\eps(S): =\frac{\# S}{\#G\cdot a}, \qquad\eps(W):=\eps(G\cdot a\cap W)=\frac{\#\{G\cdot a\cap W\}}{\# G\cdot a}.
\]
Write $\gen{S}=\sum_{s\in S} \Ff \cdot s$ for the subvector space of $\Ff^d$ generated by $S$. Notice that $0\leq \eps(W)\leq 1$.

We argue considering subvector spaces $W\subseteq V$ chosen to be of minimal dimension among all subvector spaces of $V$ and maximizing the quantity
\[
\eps(W)^{4^{(\dim(W)-\dim(V))}}.
\]
Such $W$ exists since there are only finitely many possibilities for $\eps(W)$ and $\dim(W)$. In particular we have 
\begin{equation}\label{majoration eps non nul}
\eps(W)^{4^{(\dim(W)-\dim(V))}}\geq \eps(V)^{4^0}>0,
\end{equation}
and
\begin{equation}\label{majoration eps finale}
{(1/\eps(W))}\leq {(1/\eps(V))}^{4^{(\dim(V)-\dim(W))}}\leq {(1/\eps(V))}^{4^{\dim(V)}}\leq C^{4^{\dim(V)}}.
\end{equation}
Comparing $W$ to all its subvector spaces $W'\subsetneq W$, we also have
\begin{equation}\label{inegalite optimalite}
\eps(W')< \eps(W)^{4^{(\dim(W)-\dim(W'))}}\leq\eps(W)^{4}\leq \eps(W).
\end{equation}

We consider $S:=W\cap (G\cdot a)$, and notice that it generates $W$, since otherwise $W_S=\gen{S}$ would  contradict the inequality \eqref{inegalite optimalite}. The following simple observations will be useful for the sequel:
\begin{itemize}
\item If $gW=W$, then  $gS=g({G}\cdot a\cap W)=g{G}\cdot a\cap gW={G}\cdot a\cap W=S$;
\item Reciprocally, if $gS=S$ then $gW=g\gen{S}=\gen{gS}=\gen{S}=W$.
\end{itemize}
Denote by $G(W)\subseteq G$ the stabiliser of $W$ in $G$ (which is also the stabiliser of $S$). If $gS\neq S$, the vector space $W':=g W \cap W$ satisfies $W'<W$, as well as
\[
gS\cap S\subseteq gW\cap W<W.
\]
The inequality \eqref{inegalite optimalite} then gives
\[\frac{\#gS\cap S}{\#G\cdot a}
\leq
\frac{\#(G\cdot a\cap W')}{\#G\cdot a}
=
\eps(W')< \eps(W)^4.\]
More generally, given $g\cdot G(W)\neq g'\cdot G(W)$, we similarly find
\begin{equation}\label{borne intersection induction eps}
\frac{\#gS\cap g'S}{\#G\cdot a}\leq \frac{\# g'^{-1}gS\cap S}{\#G\cdot a}< {\eps(W)}^4.
\end{equation}
To ease the notation, from now on, we simply set $\eps=\eps(W)$. Let $c=[G:G(W)]$ and choose some representative $g_1\cdot G(W),\ldots, g_c\cdot G(W)$ of $G/G(W)$. We are left to prove that $c\leq 3\cdot 1/\eps$. 

\begin{proof}[Proof of the bound for $c$]
If $c>3\cdot 1/\eps$, we set $\gamma=\lceil1/\eps+1\rceil$. Recall that $0<\eps\leq 1$, and the evident bounds $1/\eps+1 \leq \gamma<1/{\eps}+2$ and $\gamma\leq c$
(since $2\leq 2\cdot (1/\eps)$). We can also notice that
\begin{align}
\gamma\cdot \eps&\geq (1/{\eps}+1)\cdot {\eps}=1+{\eps}\\
\frac{\gamma(\gamma-1)}{2}\cdot {\eps}^4&<\frac{(1/{\eps}+1)\cdot 1/{\eps}}{2}\cdot {\eps}^4=\frac{{\eps}^2+{\eps}^3}{2}\leq {\eps}^2.
\end{align}
From the above we see that the classes $g_1\cdot G(W),\ldots,g_\gamma\cdot G(W)$ are well defined and all distinct. Thanks to inclusion-exclusion we estimate:
\begin{equation}\label{inclusion exclusion}
\# G\cdot a\geq \#\bigcup_{i=1}^\gamma g_iS\geq \sum_{i=1}^\gamma \#g_iS-\sum_{1\leq i<j\leq \gamma} \#(g_iS\cap g_j S).
\end{equation}
The number of terms of the last summation is $(\gamma(\gamma-1))/2$, and each term is bounded, via \eqref{borne intersection induction eps}, by ${\eps}^4\cdot \# G\cdot a$. Therefore
\begin{equation}\label{exclusion}
\sum_{1\leq i<j\leq \gamma} \#\{g_iS\cap g_j S\}\leq \frac{\gamma(\gamma-1)}{2}\cdot {\eps}^4\cdot \# G\cdot a
\end{equation}
For the second to last summation in \eqref{inclusion exclusion}, we notice that
\begin{equation}\label{inclusion}
\sum_{i=1}^\gamma \#g_iS=\gamma\cdot \eps\cdot \# G\cdot a\geq (1/\eps+1)\cdot \eps\cdot \# G\cdot a=(1+\eps)\cdot \# G\cdot a.
\end{equation}
Substituting \eqref{inclusion} and \eqref{exclusion} in \eqref{inclusion exclusion}, and simplifying by  $\#G\cdot a> 0$, we obtain the inequality
\[
1>1+\eps-\eps^2,
\]
which is the contradiction we were aiming for, since the right hand side satisfies $1+\eps-\eps^2\geq 1$.\end{proof}
This concludes the proof of the Proposition.
 \end{proof}

\subsection{Proof of Theorem \ref{proplift}}\label{proofthm40111}
As we show in Section \ref{prooof}, the proof of Theorem \ref{proplift} follows from Theorem \ref{mainthm} (the weak vertical Manin--Mumford) and the following:
\begin{prop}\label{prop222}
Let $\Oo_{K,S}, \mathfrak{p}, p, \mathcal{A}$ and $\pi_{\mathfrak{p}}$ be as in Theorem \ref{proplift}. 
 For $i=1, \dots , C$, let $\mathfrak{B}_i\subset \mathcal{A}_\mathfrak{p}$ be abelian subvarieties of $\mathcal{A}_\mathfrak{p}$. Let $D_i/K$ be the biggest abelian subvariety of $A$ whose reduction mod $\mathfrak{p}$ is contained in $\mathfrak{B}_i$. Let $(a_n)_{n\geq 0} \subset A_{\operatorname{tors}}$ be a sequence of torsion points of order coprime with $p$ and, for every $n$, let $E_n=\pi_{\mathfrak{p}}(\Gal(\overline{K}/K) \cdot a_n)$. If
\begin{displaymath}
\bigcup_{n} E_n \subset \bigcup_{i=1}^C \beta_i + \mathfrak{B}_i,
\end{displaymath}
then there exists a finite set $F\subset A$ such that, for all $n$,
\begin{displaymath}
a_n \in F +\bigcup_{i=1}^C D_i.
\end{displaymath}

\end{prop}

\begin{proof}We first reduce the proof to the case in which~$I:=\{i\in\{1;\dots ; C\}\,|\,D_{i,\mathfrak{p}} =\mathfrak{B}_i\}$ is~$\emptyset$.

For every~$i=1,\ldots,r$ let $\tilde{\beta}_i \in A_{\operatorname{tors}}$ be a lift of $\beta_i$ and let~$F_i=\Gal(\overline{K}/K)\cap \tilde{\beta}_i$. Let~$n$ be such that 
\begin{equation}\label{redaction:n et i}
\exists i\in I E_n\cap \beta_i+\mathfrak{B}_i\neq \emptyset.
\end{equation}
Then~$\Gal(\overline{K}/K)\cap a_n\cap \tilde{\beta}_i+ D_i\neq \emptyset$. It follows that~$ \Gal(\overline{K}/K)\cap a_n\subseteq F_i+D_i$. This proves the conclusion for an $n$ satisfying~\eqref{redaction:n et i}.

In particular we may assume that for every~$i\in I$ and~$n$, we have~$E_n\cap \beta_i+\mathfrak{B}_i=\emptyset$, and we replace~$\bigcup_{i\in\{1;\ldots;r\}} \beta_i+\mathfrak{B}_i$ by~$\bigcup_{i\in\{1;\ldots;r\}\smallsetminus I} \beta_i+\mathfrak{B}_i$.

We are reduced to the case~$I=\emptyset$, and now proceed by reducing the proof to the case where ~$\beta_1=\ldots=\beta_r=0$.

Let~$m_i:= \ord (\beta_i)$ and set~$m:=m_1\cdot\ldots\cdot m_r$. The order of~$a'_n=m\cdot a_n$ is coprime to~$p$
and the corresponding~$E'_n$ satisfy~$\bigcup_n E'_n=m\cdot \bigcup_n E_n\subseteq \bigcup_{i=1}^r \mathfrak{B}_i$. Assume that, for some finite set~$F'$, we have~$a'_n\in F'+\bigcup_{i=1}^C D_i$. Then the set~$F=\{a\in A\,|\,m\cdot a\in F'\}$ is finite and~$a_n\in F+\bigcup_{i=1}^C D_i$.

It follows that we may substitute~$a_n$ with the~$a'_n$, and assume~$\beta_1=\ldots=\beta_r=0$.

We argue by contradiction. In particular we may pass to a subsequence of $(a_n)_n$ for which, for every $i=1, \dots, C$, we have
\begin{displaymath}
 \ord (a_n + D_i/D_i)\to + \infty,
\end{displaymath}
as $n \to + \infty$. The strategy is to find some $1\leq i\leq C$ and a non-zero abelian subvariety $A'\subset A/D_i$ such that $A'_{\mathfrak{p}}\subseteq \mathfrak{B}_i/D_{i,\mathfrak{p}}$. The preimage of $A'$ along the quotient map $A\to A/D_i$ would then contradict the maximality of $D_i$.

For every $n$ there exists an $1 \leq i \leq C$ such that
\begin{displaymath}
\# \{ \pi_{\mathfrak{p}}(\Gal (\overline{K}/K) \cdot a_n) \cap \mathfrak{B}_i \} \geq \frac{1}{C} \# \{ \Gal (\overline{K}/K) \cdot a_n\}.
\end{displaymath}
Extracting a subsequence, we may assume that $i$ is fixed. For simplicity set $D=D_i$, $\mathfrak{B}=\mathfrak{B}_i$ and 
\begin{displaymath}
d_n= \ord (a_n), \text{  and  } d_n' =\ord (a_n + D/D).
\end{displaymath}
By assumption~$d_n '$ goes to infinity. Possibly extracting a subsequence, at least one of the following is true: 
\begin{enumerate}
\item There exist a rational prime number $\ell (\neq p)$ and a sequence $(k_n) \subset \Z_{\geq 0}$ going to infinity, as $n\to + \infty$, such that
\begin{displaymath}
\forall n, \ell ^{k_n} \text{  divides  } d_n'.
\end{displaymath}
\item There exists an infinite sequence of distinct prime numbers $\ell _n (\neq p)$  such that 
\begin{displaymath}
\forall n, \ell_n \text{  divides  } d_n'.
\end{displaymath}
\end{enumerate}
We argue each case separately (the strategy is the same but the techniques are different).
\begin{proof}[Proof in Case (1)]
We set
\begin{displaymath}
a_n':= [d_n' / \ell^{k_n}] a_n + D/D,
\end{displaymath}
which is an element of order $\ell^{k_n}$ of $A/D$. We say that a sequence $ b_n \in A/D[\ell^{k_n}]$, of elements of order $\ell^{k_n}$, converges to some $b_\infty \in T_\ell (A/D)$, the $\Z_\ell$-Tate module of $A/D$, if there exist $\epsilon_n$ converging to $0 \in T_\ell (A/D)$ such that $b_n$ is the reduction modulo $\ell^{k_n}$ of $b_\infty + \epsilon_n$. Thanks to the compactness of $T_\ell (A/D)$, we may assume that $a'_n$ converges to some non-zero $a'_\infty \in T_\ell (A/D)$ (indeed we can pick some lifts $\tilde{a}'_n\in T_\ell (A/D)$ of $a_n'$ and write $\tilde{a}_n'=\tilde{a}'_\infty + \epsilon_n$, with $\epsilon_n \to 0$ in $T_\ell (A/D)$).

Let $\mathcal{G}_\ell$ be the image of the natural continuous representation
\begin{displaymath}
\Gal (\overline{K}/K)\to \Gl (V_\ell (A/D)).
\end{displaymath}
For all $n$, we have\footnote{By abuse of notation we still denote by $\pi_{\mathfrak{p}}$ the reduction mod $\mathfrak{p}$ map $A/D (\overline{K})\to \mathcal{A}_{\mathfrak{p}}/D_{\mathfrak{p}}(\overline{\kappa(\mathfrak{p})}) $, induced by the map $\pi_{\mathfrak{p}} : A(\overline{K})\to \mathcal{A}_{\mathfrak{p}}(\kappa(\mathfrak{p}))$.}
\begin{equation}\label{eqaution1c}
\# \{( \pi_{\mathfrak{p}}(\Gal (\overline{K}/K) \cdot a_n')) \cap \mathfrak{B}/D_\mathfrak{p} \}\geq \frac{1}{C} \# \{ \Gal (\overline{K}/K) \cdot a'_n\}.
\end{equation}

We define the set
\begin{displaymath}
\Sigma_n :=\{\sigma \in \mathcal{G}_\ell : \pi_{\mathfrak{p}}(\sigma \cdot  a'_n) \in \mathfrak{B} /D_{\mathfrak{p}}  \}.
\end{displaymath}
For any sequence $(\sigma _n)_n \in \mathcal{G}_\ell $ such that $\sigma_n \in \Sigma_n$ for every $n$, for any limit of any subsequence (for the $\ell$-adic topology) $\sigma_\infty$ we have, along such subsequence
\begin{displaymath}
\sigma_n \cdot  a'_n \to \sigma_\infty \cdot  a'_\infty.
\end{displaymath} 
As $ \pi_{\mathfrak{p}}(\sigma \cdot  a'_n)$ belongs to $\mathfrak{B} /D_{\mathfrak{p}}$, we see that
\begin{displaymath}
 \sigma_\infty \cdot  a'_\infty \in V_{\ell}(\mathfrak{B} /D_{ \mathfrak{p}}) \subseteq  V_{\ell}(A /D),
\end{displaymath}
where we identify $V_{\ell}(A /D)$ with $V_{\ell}(\mathcal{A}_\mathfrak{p} /D_\mathfrak{p})$ (since $p$ different from $\ell$). We define $\Sigma_\infty$ to be the set of the $\sigma_\infty \in \mathcal{G}_\ell$ obtained as above.

Let $\mu$ be the Haar probability measure on $\mathcal{G}_\ell$. From the equation \eqref{eqaution1c}, we see that $\mu (\Sigma_n)$ has Haar measure at least $(1/C) $, for every $n$. Therefore $\Sigma_\infty$ has measure at least $1/C$. Let $H_\ell/\Q_\ell$ be the Zariski closure of $\mathcal{G}_\ell$ in $\Gl (V_\ell (A/D)) $ and $H_\ell^0$ its neutral component. Recall that, from the Bogomolov-Serre theorem \cite{bog}, $\mathcal{G}_\ell$ has finite index in $H_\ell(\Z_\ell)$. The Zariski closure of $\Sigma_\infty$ in $\operatorname{GL}(V_\ell(A/D))$ contains one irreducible component $H_\ell^0 \cdot \sigma_0$, for some $\sigma_0 \in \Sigma_\infty$. Let $U$ be the subvector space of $V_{\ell}(A /D)$ given by $V_{\ell}(\mathfrak{B} /D_{ \mathfrak{p}})$, and $W$ be the $\Q_\ell$-subvector space generated by $H_\ell^0(\Q_\ell) \cdot \sigma_0 \cdot  a'_\infty$. 

By Zariski density, we have that $H_\ell^0 (\Q_\ell) \cdot \sigma_0 \cdot a'_\infty \subset U$. Notice that, by construction, $W$ is $H_\ell^0$ invariant. Let $\mathcal{G}_\ell^0= \mathcal{G}_\ell \cap H_\ell^0$. It is a finite index subgroup of $\mathcal{G}_\ell$ which corresponds to a finite extension of fields $K_\ell /K$. We may apply Proposition \ref{lemmatolift}, with $E=\Q_\ell$-coefficients to $W$ and $U$, to obtain a $ K_\ell$-abelian subvariety $A'' \subseteq (A/D)_{ K_\ell} $ such that 
\begin{equation}\label{r}
V_{\ell}(A'')\subseteq V_\ell (\mathfrak{B} /D_{ \mathfrak{p}}),
\end{equation}
and 
\begin{displaymath}
\Gal(\overline{K}/K_\ell) \cdot \sigma \cdot a_\infty' \subset W \subseteq V_{\ell}(A'').
\end{displaymath}
By the reduction of Section \ref{reduction}, we have that the same holds true for some abelian subvariety $A' /K \subseteq A/D$.
By definition of $D$, thanks to \eqref{r}, we see that $A'=0$, but since $0 \neq a'_\infty \in V_\ell( A')$, we must also have that $A'\neq 0$. This is the contradiction we were aiming for.
\end{proof}

\begin{proof}[Proof in Case (2)]
In this case, we set
\begin{displaymath}
a_n':= [d_n' / \ell_n] a_n + D/D,
\end{displaymath}
which is an element of order $\ell_n$ of $A/D$. Consider the ring $R:= \prod_{n\geq 0} \Ff_{\ell_n}.$
The sequence $(a_n')_n$ is naturally an element of $T_{\widehat{\Z}}(A/D)\otimes R$, where $T_{\widehat{\Z}}(A/D)$ is the adelic Tate module of $A/D$. By assumption, we know that the order of $a_n'$ is going to infinity, as $n\to + \infty$, and in particular $(a_n')_n$ is not a torsion element. The image of $(a_n')_n$ in 
\begin{displaymath}
T_{\widehat{\Z}}(A/D)\otimes R \otimes \Q \cong (R\otimes \Q)^{2 \dim (A/D)}
\end{displaymath}
is non-zero. There exists a field $E$ and a ring map $R\otimes \Q\to E$ such that the image $a'_\infty \in T_{\widehat{\Z}}(A/D)\otimes E $ is non-zero.

Notice that, for every $n$,
\begin{displaymath}
\# \{( \pi_{\mathfrak{p}}(\Gal (\overline{K}/K) \cdot a'_n) \cap \mathfrak{B}/D_{ \mathfrak{p}})\}\geq \frac{1}{C} \# \{ \Gal (\overline{K}/K) \cdot a_n'\}.
\end{displaymath}
Let $G_{\ell,n}$ be the image of $\Gal (\overline{K}/K)$ in $\Gl (A/D[\ell_n ])$. We can apply the combinatorial result of Section \ref{sectiononcomblemma} (with $V= \mathfrak{B}/D_{ \mathfrak{p}} [\ell_n]$) to guarantee the existence a subgroup $H_{\ell, n}$ of $G_{\ell,n}$, and a $\sigma_n \in \Gal (\overline{K}/K)$, such that
\begin{displaymath}
[G_{\ell, n}:H_{\ell,n}]\leq \psi(C) = 3 \cdot C^{4^ {2\dim A/D}},
\end{displaymath}
and
\begin{displaymath}
\pi_{\mathfrak{p}} (H_{\ell, n} \cdot \sigma_n \cdot a_n') \subset \mathfrak{B}/D_{ \mathfrak{p}}.
\end{displaymath}
Let $K_{H_{\ell,n}}/K$ be the finite extension of fields corresponding to $H_{\ell, n}$, and notice that $[K_H:\Q] \leq \psi(C)[K:\Q]$. Let $M$ be the constant appearing in \eqref{mwthm} from Theorem \ref{tatethm}, for an abelian variety $A/D$ over $K_{H_{\ell,n}}$, and notice that $M$ is bounded independently of $n$. Upon extraction, we may assume that $\ell_n$ is strictly bigger than $M$ for every $n$. Let $W_n$ be the $\Ff_{\ell_n}$-subvector space of $A/D[\ell_n]$ generated by $H_{\ell, n}\cdot \sigma_n \cdot a'_n$. Notice that $W_n$ is invariant under the action of $\Gal (\overline{K}/K_{H_{\ell, n}})$, and so by the semisimplicity of \eqref{mwthm}, there exists an element  $f\in \End (A)\otimes \Ff_{\ell_n}$, such that 
\begin{displaymath}
W_n = f (A[\ell_n]).
\end{displaymath}
Since $K$  is such that all endomorphism of $A_{\overline{K}}$ are defined over $K$, we see that $W_n$ is actually $\Gal (\overline{K}/K)$-invariant. So we have, for every $n$,
\begin{displaymath}
\pi_{\mathfrak{p}}( \Gal (\overline{K}/K) \cdot \sigma_n \cdot  a_n')  \subset \mathfrak{B} /D_{ \mathfrak{p}}.
\end{displaymath}
In particular we see that
\begin{displaymath}
\Gal (\overline{K}/K) \cdot \sigma_\infty \cdot  a'_\infty  \subset  V_{E}(\mathfrak{B}/D_{ \mathfrak{p}}) \subset V_{E}(A /D).
\end{displaymath}
We now conclude as in the previous case. Let indeed $W$ be the $E$-subvector space generated by $\Gal (\overline{K}/K) \cdot a'_\infty$. We apply Proposition \ref{lemmatoliftadelic}, with $E$-coefficients to $W$ and $u\in \End ( \mathcal{A}_\mathfrak{p}/ D_{ \mathfrak{p}})\otimes \Q$ an idempotent corresponding to $\mathfrak{B} / D_{ \mathfrak{p}} \subset \mathcal{A}_\mathfrak{p}/ D_{ \mathfrak{p}}$. So we get an abelian subvariety $A'\subseteq A/D$ such that 
\begin{equation}\label{eqqqqqq}
V_{E}(A')\subseteq V_{E} (\mathfrak{B} / D_{\mathfrak{p}}),
\end{equation}
and 
\begin{displaymath}
\Gal(\overline{K}/K) \cdot a_\infty ' \subset W \subseteq V_{E}(A').
\end{displaymath}
By definition of $D$, thanks to \eqref{eqqqqqq}, we see that $A'=0$, but since $0 \neq a' _\infty \in V_E(A')$, we must also have that $A'\neq 0$. This is the contradiction we were aiming for.
\end{proof}
The proof of Proposition \ref{prop222} is complete.
\end{proof}


We can now prove~Theorem \ref{proplift}.

\subsubsection{Reduction of Theorem \ref{proplift} to Corollary \ref{cor 1.2.5}}\label{prooof} 
We first reduce Theorem \ref{proplift} to Corollary \ref{cor 1.2.5}, using the classical Manin-Mumford theorem, Theorem \ref{mm}, which is also consequence of Theorem\ref{mainthm}, as noticed in Remark \ref{rmkimplication}.

Let~$(a_n)_{n\geq0}$ be a sequence in~$A(\overline{K})_{\operatorname{tors}}$, where each~$a_n$ is of order coprime to~$p$. Thanks to the classical Manin-Mumford theorem we may write,
in the abelian~$\overline{K}$-variety~$A$,
\begin{equation}\label{using mm}
\overline{\bigcup_{n\in\Z_{\geq0}} \Gal(\overline{K}/K)\cdot a_n}^{\Zar}=\bigcup_{i=1}^r b_i+A_i
\end{equation}
for some~$b_1,\ldots,b_r\in A(\overline{K})_{tors}$ and abelian subvarieties~$A_1,\ldots,A_r\subset A$.


There is a finite extension~$K'/K$ such that the~$b_i$~are in~$A(K')$ and the~$A_i$ are defined over~$K'$. We can write~$\Gal(\overline{K}/K)= \Gal(\overline{K}/K')\sigma_1\cup\ldots\cup \Gal(\overline{K}/K')\sigma_{[K:K']}$, and observe that Theorem \ref{proplift} for the sequence~$(a_n)_{n\in\Z_{\geq0}}$ over the field~$K$ is equivalent to Theorem \ref{proplift} for the sequence~$(\sigma_i(a_n))_{n\in\Z_{\geq0},1\leq i\leq[K':K]}$ over the field~$K'$. Therefore we may and do assume~$K'=K$. In particular, for~$a\in b_i+A_i(\overline{K})$ we have~$  \Gal(\overline{K}/K)\cdot a\subseteq b_i+A_i(\overline{K})$. Thus, for~$1\leq i\leq r$,
\begin{equation}\label{using K=K'}
\left(\bigcup_{n\in\Z_{\geq0}} \Gal(\overline{K}/K)\cdot a_n\right)\cap  (b_i+A_i)=\bigcup_{a_n\in b_i+A_i}\Gal(\overline{K}/K)\cdot a_n.
\end{equation}

We also assume that, in~\eqref{using mm}, each~$b_i+A_i$ is an irreducible component of the variety. It follows that, for~$1\leq i\leq r$,
\[
b_i+A_i=\overline{\bigcup_{a_n\in b_i+A_i} \Gal(\overline{K}/K)\cdot a_n}^{\Zar}.
\]

It is enough to prove Theorem \ref{proplift} for each of the subsequences~$(a_m)_{m\in\{n\in\Z_{\geq0}|a_n\in b_i+A_i\}}$. 
We may thus assume that$r$, the number of irreducible components, is $=1$. Substituting~$a_n$ by~$a_n-b_1$ , we may also assume~$b_1=0$.

Replacing~$A$ by~$A_1$, it is enough to prove Corollary \ref{cor 1.2.5}.

\subsubsection{Proof of Corollary \ref{cor 1.2.5}}\label{prooof-cor}
By passing to a subsequence, we may assume that the sequence $(a_n)_{n\geq0}$ is
is \emph{generic} in~$A$. In particular, for every strict abelian subvariety~$D\subsetneq A$,
the sequence~$(a_n+D)_{n\in\Z_{\geq0}}$ of~$A/D$ satisfies 
\begin{equation}\label{etre strict}
\ord(a_n+D)\to +\infty.
\end{equation}
 
By Theorem~\ref{mainthm} we have
\[
\overline{\bigcup_{n\in\Z_{\geq0}} E_n}^{\Zar}= \bigcup _{i=1}^C \beta_i+\mathfrak{B}_i \subset \mathcal{A}_{\mathfrak{p}}.
\]

In fact, thanks to Proposition \ref{prop222}, there is a finite set $F$ such that
\begin{equation}\label{from prop222}
a_n \in F +\bigcup_{i=1}^C D_i.
\end{equation}

If every~$D_i\subseteq A$ is a strict subvariety, then~\eqref{etre strict} implies
\[
\forall 1\leq i\leq C, \ord(a_n+D_i)\to+\infty,
\]
contradicting~\eqref{from prop222}. This implies that the special 0-cycles $(E_n)_{n\geq 0}$ are Zariski dense in $\mathcal{A}_\mathfrak{p}$ We have proved Corollary ~\ref{cor 1.2.5}, and this finishes the proof of Theorem \ref{proplift}.

\section{Purely \texorpdfstring{$p$}{p}-adic phenomena, Theorem \ref{pureppart}}\label{padicsection}
In this final section we prove Theorem \ref{pureppart}, which, for the convenience of the reader, is recalled below. 
\begin{thm}[Theorem \ref{pureppart}]\label{pureppartrecall}
Fix a prime $\mathfrak{p}\subset \Oo_{K,S}$ of residue characteristic $p$. Let $\mathcal{A}$ be an abelian scheme over $\Oo_{K,S}$, $(a_n)_n\subset A_{\operatorname{tors}} (\overline{K})$  (possibly of order divisible by $p$), and write $E_n=\pi_{\mathfrak{p}}(\Gal(\overline{K}/K) \cdot a_n)$, for a map $
\pi_{\mathfrak{p}}: A(\overline{K})\to \mathcal{A}_\mathfrak{p}(\overline{\kappa(\mathfrak{p})}).$
Then for every component $\alpha + \mathfrak{B}$ of the Zariski closure $\mathcal{V}$ of 
\begin{displaymath}
\bigcup_{n\geq 0} E_n\subset \mathcal{A}_\mathfrak{p}\subset \mathcal{A}
\end{displaymath}
there exists $B \subseteq A$ an abelian subvariety such that $ \mathfrak{B} \subset B_\mathfrak{p}$ and the quotient $B_\mathfrak{p}/  \mathfrak{B}$ is of $p$-rank zero.
\end{thm}

The key input needed to adapt the strategy of the proof of Theorem \ref{proplift} to the above, is the following $p$-adic version of the arguments of Section \ref{keypropo}. Compared to the $\ell$-adic argument, the crux is to have some control on the reduction map from $T_p(A)$ to $T_p(\mathcal{A}_\mathfrak{p})$ and to find a suitable faithful representation of the endomorphism algebra of $A$ and $\mathcal{A}_\mathfrak{p}$. All this is summarised in the statement of Proposition \ref{thingy statement}, which we first use as a black box, and then explain its proof (in the appendix).
\begin{thm}\label{thm502}
Let $W$ be a $\Gal(\overline{K}/K)$-invariant subspace of $V_p(A)$ and denote by $W_\mathfrak{p}$ its image in $V_p(\mathcal{A}_\mathfrak{p})$. If there exists $u \in \End(\mathcal{A}_\mathfrak{p})\otimes \Q$ such that
\begin{displaymath}
W_\mathfrak{p}\subseteq u\cdot V_p(\mathcal{A}_\mathfrak{p}),
\end{displaymath}
then there is an idempotent $v\in \End(A)\otimes \Q$ such that
\begin{displaymath}
W_\mathfrak{p} \subseteq v\cdot V_p(\mathcal{A}_\mathfrak{p})\subseteq u\cdot V_p(\mathcal{A}_\mathfrak{p}).
\end{displaymath}

Assume moreover that $ u\cdot V_p(\mathcal{A}_\mathfrak{p})=V_p(\mathfrak{B})$ where $\mathfrak{B} \subset \mathcal{A}_\mathfrak{p}$ is the smallest abelian subvariety such that $W_\mathfrak{p} \subset V_p(\mathfrak{B})$.
Then $v\cdot V_p(A)= V_p(B)$ and $v\cdot V_p(\mathcal{A}_\mathfrak{p})= V_p(B_{\mathfrak{p}})$ with~$B=vA$, as an abelian subvariety. Furthermore 
$ \mathfrak{B} \subset B_\mathfrak{p}$, and the quotient $B_\mathfrak{p}/  \mathfrak{B}$ is of $p$-rank zero (i.e. $B_\mathfrak{p} [p^\infty]= \mathfrak{B}[p^\infty]$).
\end{thm}

\subsection{Preliminaries}
\subsubsection{Statement of a key proposition}
To prove Theorem \ref{thm502} we need the following result about abelian varieties $A$ over local fields $L:=K_{\mathfrak{p}}$ with good reduction over the residue field $k:=\kappa(\mathfrak{p})$. Set $V:=V_p(A_L)$.
\begin{prop}\label{thingy statement}
There exist a field extension $E$ of $L$, and a polynomial $Q(X)$ with coefficients in $\QQbar \cap \Q_p$ such that:
\begin{enumerate}
\item The left action $\End(A_L)$ on $V\tens_{\Q_p} E$ extends to an $E$-linear left action
\begin{displaymath}
\rho:\End(A_k)\to \End_E(V\tens_{\Q_p} E);
\end{displaymath}
\item The kernel of the reduction map, which we call \emph{the ramified part of V} and denote by
\begin{displaymath}
V^r:=\ker  \left(V_p(A_L) \to V_p(A_k)\right),
\end{displaymath}
and the Frobenius endomorphism $\FrobAk\in\End(A_k)$ satisfy the identity of $E$-linear subspaces
\begin{displaymath}
\pi(V\tens E)=V^r\tens E,
\end{displaymath}
where $\pi=Q(\rho(\FrobAk))$ is seen as an element of $\End_E(V\tens_{\Q_p} E)$. Moreover $\pi$ is a central idempotent;
\item Consider $V\otimes E $ modulo $V^r\otimes E$. Via $\rho$, it is an $\End(A_k) \otimes E$-module, and it can naturally be identified with the $\End(A_k) \otimes E$-module $V_p(A_k)\otimes E$ (where the action on the latter is the natural one coming from the naïve $p$-adic Tate module of $A_k$, as an $\End(A_k)$-module).
\end{enumerate}
\end{prop}
The proof appears in Section \ref{finalsectionp}, after all required objects are introduced and described (for example $E$ is one of Fontaine's famous period rings).

\subsubsection{Variant of Lemma \ref{keylemma} (with an extra central idempotent)}
\begin{prop}\label{keylemma22}Let $M, N$ be semisimple finite dimensional $\Q$-algebras, with $M$ a subalgebra of $N$. Let $E/\Q$ be a field extension, $\pi \in N \otimes E$ a central idempotent, and
\begin{displaymath}
\left(N\otimes_\Q E \right)\hookrightarrow \End_E(V)
\end{displaymath}
be a faithful representation in $V$ a finite dimensional $E$ linear vector space. Let $u$ be an idempotent of $N$, and $w$ be an idempotent of $M\otimes_\Q E$ such that
\begin{displaymath}
w \cdot V+ \pi \cdot V \subseteq  u\cdot V + \pi \cdot V.
\end{displaymath}
Then there exists an idempotent $v\in M$, such that
\begin{displaymath}
w\cdot V + \pi \cdot V \subseteq v\cdot V + \pi \cdot V \subseteq u\cdot V+ \pi \cdot V.
\end{displaymath}
\end{prop}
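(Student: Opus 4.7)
The plan is to adapt the proof of Lemma~\ref{keylemma} by working modulo the two-sided ideal $\pi(N\otimes E)$ of $N\otimes E$. First I would translate the hypothesis into an algebraic identity: since $\pi$ is a central idempotent of $N\otimes E$, the $(N\otimes E)$-module $V$ decomposes as $V = \pi V \oplus (1-\pi) V$, and the quotient algebra $\widetilde{N} := (N\otimes E)/\pi(N\otimes E)$ acts faithfully on $V/\pi V$. By this faithfulness, the hypothesis $wV+\pi V\subseteq uV+\pi V$ becomes the relation $uw \equiv w \pmod{\pi(N\otimes E)}$. Mimicking Lemma~\ref{keylemma}, I would then define the right $M$-ideal
\[
X := \{m \in M : um \equiv m \pmod{\pi(N\otimes E)}\},
\]
which is indeed a right ideal because $\pi(N\otimes E)$ is two-sided in $N\otimes E$; by semisimplicity of $M$, $X = vM$ for some idempotent $v\in M$. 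Since $v\in X$, we immediately obtain $uv \equiv v \pmod{\pi(N\otimes E)}$, yielding the upper inclusion $vV+\pi V\subseteq uV+\pi V$ of the conclusion.

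For the remaining lower inclusion $wV+\pi V\subseteq vV+\pi V$, one must establish $vw\equiv w \pmod{\pi(N\otimes E)}$. Following the scheme of Lemma~\ref{keylemma}, the key would be to establish the refined identity
\[
\bigl(u(N\otimes E) + \pi(N\otimes E)\bigr) \cap (M\otimes E) = v(M\otimes E) + \bigl(\pi(N\otimes E)\cap (M\otimes E)\bigr).
\]
Indeed, since $w$ belongs to the left-hand side by hypothesis and lies in $M\otimes E$, such an identity lets us write $w = vm + p$ with $m\in M\otimes E$ and $p\in \pi(N\otimes E)\cap(M\otimes E)$; direct computation then gives $vw - w = (v-1)p \in \pi(N\otimes E)$, using that $\pi(N\otimes E)$ is two-sided. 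The idempotence of $w$, assumed in the statement, is the crucial extra ingredient: it forces the image $\bar w$ in $\widetilde{N}$ to be an idempotent in the image of $M\otimes E$, restricting it enough for the decomposition to exist.

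The main obstacle I foresee is proving this refined identity. In Lemma~\ref{keylemma} the analogous identity $(uN\otimes E)\cap(M\otimes E) = (uN\cap M)\otimes E$ rests on the general fact that $(A\otimes E)\cap(B\otimes E) = (A\cap B)\otimes E$ for $\Q$-subspaces $A,B\subseteq N$. In our setting $\pi(N\otimes E)$ is not of the form $Z\otimes E$ for a $\Q$-subspace $Z\subseteq N$, so the direct dimension-counting argument breaks down. I would overcome this by Peirce-decomposing $N\otimes E$ along the central idempotent $\pi$—obtaining Wedderburn factors on which $\pi$ acts as $0$ and as $1$ respectively—and applying Lemma~\ref{keylemma} on the $(1-\pi)$-factor, using that the images of $M$ and $N$ in $\widetilde{N}$ remain semisimple $\Q$-subalgebras and that the idempotence of $w$ constrains $\bar w$ to lie in the image of $X\otimes E$ in $\widetilde{N}$. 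Finally, the lifted idempotent $v\in M$ obtained in this way is precisely the desired one.
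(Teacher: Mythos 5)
Your first half is sound and is essentially the paper's construction: the right ideal $X=\{m\in M: um\equiv m\bmod \pi(N\otimes E)\}$, its idempotent generator $v\in M$, and the inclusion $v\cdot V+\pi\cdot V\subseteq u\cdot V+\pi\cdot V$ all go through (the paper obtains the same $v$ slightly differently, via the right ideal $X_\pi=(uN[\pi]+\pi N[\pi])\cap M[\pi]$ of the $\Q$-algebra $M[\pi]$ and idempotent lifting modulo the central ideal $(\pi)$, its Lemmas \ref{lem1} and \ref{lem2}). The gap is the second inclusion. You correctly reduce it to the ``refined identity'' $\bigl(u(N\otimes E)+\pi(N\otimes E)\bigr)\cap(M\otimes E)=v(M\otimes E)+\bigl(\pi(N\otimes E)\cap(M\otimes E)\bigr)$, but you do not prove it, and the repair you sketch cannot prove it: after cutting by $1-\pi$ there is no $\Q$-rational faithful datum to which Lemma \ref{keylemma} applies. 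The images of $M$ and $N$ in $\widetilde N=(1-\pi)(N\otimes E)$ are indeed semisimple $\Q$-algebras $\bar M\subseteq \bar N_0$, but $\bar N_0\otimes_\Q E$ does not act faithfully on $(1-\pi)V$ (its action factors through the proper quotient $E\cdot\bar N_0=\widetilde N$, precisely because $\pi$ is not defined over $\Q$), and faithfulness is exactly what the proof of Lemma \ref{keylemma} uses; the claim that idempotence of $w$ forces $\bar w$ into the image of $X\otimes E$ is not substantiated.

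In fact the refined identity fails in general, so no purely formal Peirce-decomposition argument from the stated hypotheses can close this step. Take $F=\Q(\sqrt2)$ with nontrivial automorphism $\sigma$, $N=M_2(F)$ viewed as a $\Q$-algebra, $M=\{\mathrm{diag}(x,\sigma(x)):x\in F\}\cong F$, $E=\R$, so $N\otimes_\Q E\cong M_2(\R)\times M_2(\R)$ via the two real embeddings, acting faithfully on $V=\R^2\oplus\R^2$; let $\pi=(0,I)$, $u=\mathrm{diag}(1,0)\in N$, and let $w\in M\otimes_\Q E\cong\R\times\R$ be the idempotent mapping to $(\mathrm{diag}(1,0),\mathrm{diag}(0,1))$. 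Then $uw\equiv w\bmod \pi(N\otimes E)$, while $X=0$ (so $v=0$) and $\pi(N\otimes E)\cap(M\otimes E)=0$, yet $w\neq 0$ lies in the left-hand side of your identity, whose right-hand side is $0$; moreover $\bar w\neq 0$ is an idempotent in the image of $M\otimes E$, so idempotence of $w$ gives no constraint of the kind you want. In this example no idempotent $v\in M$ satisfies both inclusions at all, even though every stated hypothesis holds; so the obstacle you isolate is the true crux, it cannot be bypassed by your sketch, and any completion must rely on input beyond the bare hypotheses --- this is precisely the point where the paper's own proof does its work, through the auxiliary algebras $M[\pi]\subseteq N[\pi]$, the lifting of idempotents modulo $(\pi)$ in Lemma \ref{lem1}, and the membership criterion of Lemma \ref{lem2} (and, in the application, the specific compatibility of $\pi$ with the Frobenius), rather than through an analogue of Lemma \ref{keylemma} on the $(1-\pi)$-factor.
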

\begin{rmk}\label{r1}
Let $B$ be a semisimple algebra, and $\pi$ be a \emph{central} idempotent. Then we can write, as $\Q$-algebras 
\begin{displaymath}
B=\pi B \oplus (1_B-\pi)B.
\end{displaymath}
Let $I$ be a left (resp. right) ideal of $B$ containing $\pi$, we may write
\begin{displaymath}
I= \pi B \oplus I',
\end{displaymath}
for $I':= (1_B-\pi)I$. So we have a correspondence between left (resp. right) ideals of $B$ containing $\pi$, and left (resp. right) ideals of $(1_B-\pi)B$.\end{rmk}
For the proof of Proposition \ref{keylemma22}, we need two lemmas (the first is a simple fact).
\begin{lemma}\label{lem1}
Let $M\subset B$ be a semisimple algebras, $\pi$ a central idempotent such that $B=M[\pi]$. Then $M$ surjects onto $B/(\pi)$, and every idempotent of $B/(\pi)$, comes from an idempotent of $M$.
\end{lemma}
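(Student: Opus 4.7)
The plan is to leverage the centrality and idempotency of $\pi$ to obtain a clean additive decomposition of $B$. Because $\pi$ is a central idempotent, the sub-algebra $M[\pi]$ coincides additively with $M + M\pi$: commutativity of $\pi$ with $M$ eliminates ordering issues, while $\pi^2 = \pi$ collapses higher powers. First I would write $B = M + M\pi$, from which the surjectivity of the composition
\[
M \hookrightarrow B \twoheadrightarrow B/(\pi)
\]
is immediate, as any element $m_0 + m_1\pi \in B$ reduces modulo $(\pi) = B\pi$ to the class of $m_0 \in M$.

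For the second claim, I would analyse the kernel $K := M \cap (\pi)$ of this surjection, which is a two-sided ideal of $M$. Semisimplicity of $M$ supplies a central idempotent $e \in M$ with $K = eM$ and a decomposition of $\Q$-algebras
\[
M = eM \oplus (1-e)M.
\]
The quotient map then restricts to an isomorphism of algebras $(1-e)M \xrightarrow{\sim} B/(\pi)$, and pulling back any idempotent $\bar v \in B/(\pi)$ along this isomorphism produces an idempotent $v \in (1-e)M \subseteq M$ mapping to $\bar v$.

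I do not expect a substantial obstacle: the first half is a direct unpacking of the identity $B = M + M\pi$, and the second half is Wedderburn bookkeeping, namely the standard fact that two-sided ideals of finite-dimensional semisimple $\Q$-algebras are cut out by central idempotents and thus split off as algebra direct summands. The only subtlety worth flagging is keeping careful track of \emph{two-sided} (rather than one-sided) ideals throughout, since lifting idempotents through the quotient requires the splitting to respect the full algebra structure, not merely the module structure.
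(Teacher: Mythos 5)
Your proof is correct. Note that the paper itself gives no argument for Lemma \ref{lem1}: it is introduced as ``a simple fact in the theory of semisimple algebras'' and used as a black box in the proof of Proposition \ref{keylemma22}, so there is no written proof to compare against; your write-up simply fills the gap. The two steps you take are exactly the natural ones. Centrality and idempotency of $\pi$ give $B=M[\pi]=M+M\pi$, whence surjectivity of $M\to B/(\pi)$, and this is consistent with the decomposition $B=\pi B\oplus(1-\pi)B$ recorded in Remark \ref{r1}. For the lifting of idempotents, your use of Wedderburn theory is sound: $K=M\cap(\pi)$ is a two-sided ideal of $M$, and since the algebras in play are finite dimensional and semisimple (as the paper assumes throughout this section), $K=eM$ for a central idempotent $e\in M$, so the quotient map restricts to a unital algebra isomorphism $(1-e)M\xrightarrow{\sim}B/(\pi)$ along which any idempotent of $B/(\pi)$ pulls back to an idempotent of $M$. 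Your emphasis on working with two-sided (hence central-idempotent-generated) ideals, rather than the one-sided ideals appearing in the fact quoted before Lemma \ref{keylemma}, is precisely the point that makes the lift respect the multiplicative structure, so no gap remains.
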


\begin{lemma}\label{lem2}
Let $M, B , \pi$ be as in Lemma \ref{lem1}, and $V$ be a $\Q$-vector space with a faithful action of $B$. Let $u$ be an idempotent of $B$, and consider the ideal of $B$ 
\begin{displaymath}
I:=uB+\pi B\subset B.
\end{displaymath}
An element $b\in B$ belongs to $I$ if and only if 
\begin{displaymath}
b \cdot V +  \pi \cdot V \subseteq u \cdot V + \pi \cdot V.
\end{displaymath}
\end{lemma}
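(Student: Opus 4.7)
My plan is as follows. The $(\Rightarrow)$ direction is immediate: if $b = ux + \pi y$ for some $x, y \in B$, then $bV \subseteq uV + \pi V$, so \emph{a fortiori} $bV + \pi V \subseteq uV + \pi V$. The real content is the converse, and I intend to exploit centrality of $\pi$ to reduce it to a purely faithful-module statement.

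Since $\pi$ is a central idempotent, I would decompose $B$ as a direct product of two-sided ideals $B = \pi B \oplus (1-\pi) B$, and correspondingly $V = \pi V \oplus (1-\pi) V$. Multiplying the hypothesized inclusion $bV + \pi V \subseteq uV + \pi V$ by $(1-\pi)$ kills the $\pi V$ summands on both sides, leaving $(1-\pi) b \, V \subseteq (1-\pi) u \, V$. Centrality also gives $(1-\pi) c \, V = (1-\pi) c \cdot (1-\pi) V$ for every $c \in B$, so this is really an inclusion of subspaces of $V' := (1-\pi) V$. Setting $B' := (1-\pi) B$, $u' := (1-\pi) u$ (still an idempotent), and $b' := (1-\pi) b$, I would then check that $B'$ acts faithfully on $V'$: anything in $B'$ vanishing on $V'$ also vanishes on $\pi V$ by centrality, hence on all of $V$, and so is zero by faithfulness of the original action.

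In this reduced setting I have an idempotent $u' \in B'$ and $b' \in B'$ with $b' V' \subseteq u' V'$, and I want $b' \in u' B'$. For each $v' \in V'$ pick $w' \in V'$ with $b' v' = u' w'$; then $u' b' v' = u'^2 w' = u' w' = b' v'$, so $b' - u' b'$ annihilates $V'$ and vanishes by faithfulness, giving $b' = u' b' \in u' B'$. To conclude, centrality yields $u' B' = u (1-\pi) B \subseteq u B$, so $(1-\pi) b \in u B$, and combined with $\pi b \in \pi B$ this delivers $b \in uB + \pi B = I$. The only point in the argument that is not routine is the faithfulness transfer from $B$ acting on $V$ to $B'$ acting on $V'$, and it rests entirely on $\pi$ being central; the remainder is the standard fact that in a faithful module over a semisimple algebra, an idempotent is determined by the subspace it cuts out.
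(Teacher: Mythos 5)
Your proof is correct and follows essentially the same route as the paper: split off the central idempotent $\pi$, observe that the hypothesis forces $\im((1-\pi)b)\subseteq\im((1-\pi)u)$, and use faithfulness together with idempotency of $(1-\pi)u$ to get $(1-\pi)b\in u(1-\pi)B\subseteq uB$, whence $b=\pi b+(1-\pi)b\in uB+\pi B$. The only cosmetic difference is that the paper cites the faithfulness step inside its Lemma on semisimple algebras, whereas you prove that step (and the transfer of faithfulness to $(1-\pi)B$ acting on $(1-\pi)V$) directly.
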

\begin{proof}
We need only to prove $\Leftarrow$, the other implication is evident. Let $b\in B$ be such that $b \cdot V + \pi \cdot V \subseteq u \cdot V + \pi \cdot V$. We write 
\begin{displaymath}
b = \pi b + (1-\pi)b.
\end{displaymath}
We also have an induced decomposition on $V$:
\begin{displaymath}
V= \ker (\pi \cdot ) \oplus \im (\pi \cdot).
\end{displaymath}
As $u$ and $b $ commute with $\pi$, they are compatible with the above direct sum decomposition, that is $u$ acts on $\ker (\pi \cdot )$ as $(1-\pi)u$ and on $\im (\pi \cdot)$ as $\pi u$, and similarly for $b$. We have 
\begin{displaymath}
\im ((1-\pi) b ) \oplus \im (\pi)= \im (b) + \im (\pi)  \subseteq \im (u) + \im (\pi)= \im ((1-\pi)u)\oplus \im (\pi).
\end{displaymath}
Hence $\im ((1-\pi) b ) \subseteq \im ((1-\pi) u)$. But $(1-\pi) u$ is unipotent, by Lemma \ref{keylemma}, $(1-\pi)b$ belongs to $(1-\pi)u B$, say 
\begin{displaymath}
(1-\pi)b = (1-\pi)u x,
\end{displaymath}
for some $x \in B$. Finally $b=\pi b +(1-\pi)b=\pi b + (1-\pi)ux= \pi b + u(1-\pi)x\in I$.
\end{proof}

\begin{proof}[Proof of Proposition \ref{keylemma22}] 
We first consider
\begin{displaymath}
X_\pi:= (uN[\pi]+\pi N[\pi]) \cap (M [\pi]),
\end{displaymath}
where $u\in N$ acts diagonally on $N[\pi]$. It is a right ideal of $M[\pi]$, and as such it is generated by an idempotent $m=m^2$ of $M[\pi]$ (as recalled before the proof of Lemma \ref{keylemma}):
\begin{displaymath}
X_\pi = m M[\pi].
\end{displaymath}
Notice that $X_\pi$ contains $(\pi)$, and so, by Lemma \ref{lem1} (applied to $B=M[\pi]$), we can write $X_\pi =v M[\pi]+ \pi M [\pi] $, where $v$ is an idempotent of $M$ such that
\begin{displaymath}
v \mod (\pi) = m \mod (\pi),
\end{displaymath}
that is $v+ (\pi)= m + (\pi)$. Hence $m M[\pi]= m M[\pi]  + \pi m M[\pi]  = v M[\pi]+ \pi M[\pi]$.

Notice that 
\begin{displaymath}
X_\pi \otimes E= ((uN[\pi]+\pi N[\pi])\otimes E) \cap (M [\pi]\otimes E).
\end{displaymath}
We may apply Lemma \ref{lem2} to $b=v \in I=uN[\pi]\otimes E + \pi N[\pi]\otimes E$, to get 
\begin{displaymath}
v\cdot V + \pi \cdot V \subseteq u\cdot V+ \pi \cdot V.
\end{displaymath}
Again applying Lemma \ref{lem2} $w \in uN[\pi]\otimes E + \pi N[\pi]\otimes E$, since we know that
\begin{displaymath}
w \cdot V+ \pi \cdot V \subseteq  u\cdot V + \pi \cdot V.
\end{displaymath}
By construction $w\in M [\pi]\otimes E$, so $w \in X_\pi \otimes E$, and we can write
\begin{displaymath}
w\cdot V + \pi \cdot V \subseteq v\cdot V + \pi \cdot V.
\end{displaymath}
\end{proof}

\subsection{Proof of Theorem \ref{thm502}}\label{sectionp}
 The place $\mathfrak{p}$ is a place of good reduction for $A$ and have
\begin{displaymath}
\End_{K}(A_{K})= \End_{K_{\mathfrak{p}}}(A_{K_{\mathfrak{p}}}),
\end{displaymath} 
since the endomorphism ring of an abelian variety does not change under extensions of algebraically closed field of characteristic zero\footnote{This follows from the fact that the functor $\End_K(A)$ is representable by an \'{e}tale commutative $K$-group scheme.} (recall also the reductions from Section \ref{reduction} that guarantee that $\End_{K}(A_{K})= \End_{\overline{K}}(A_{\overline{K}})$).
As a $\Q_p$-vector spaces of dimension $2g$, we may also naturally identify $V_p(A_{K})$ with $V_p(A_{K_{\mathfrak{p}}})$, via the map induced by $\overline{K} \to\overline{K_{\mathfrak{p}}}$. We now use some facts that will be explained in details in Appendix \ref{preliminaries} (where we let $L$ to be $K_{\mathfrak{p}}$). Set $V:= V_p(A_{K})$. Let $E/\Q$ be the field extension of Proposition \ref{thingy statement}, and consider $V\otimes E$ with its natural action of $\End(A_{K})\otimes \Qp$ and the $\rho$-action of $\End(\mathcal{A}_\mathfrak{p})\otimes \Qp$ coming from (1) of Proposition \ref{thingy statement}.

We can consider the element 
\begin{equation}\label{defofpi}
\pi\in \left( \End(\mathcal{A}_\mathfrak{p})\otimes \Qbar \right) \cap \left( \End(\mathcal{A}_\mathfrak{p})\otimes \Q_p\right),
\end{equation}
defined in (2) of Proposition \ref{thingy statement}. By construction, $\pi$ is a central element (it is a $(\Q_p\cap \Qbar)$-combination of powers of the Frobenius endomorphism $F_{A,\mathfrak{p}}\in \End(\mathcal{A}_\mathfrak{p})$). Thanks to \eqref{tateconj1} (the Tate conjecture with $\Q_p$-coefficients), there is some idempotent $w\in \End(A_{K})\otimes \Qp$ such that $W \otimes E =w \cdot V\otimes E$. The key point of Proposition \ref{thingy statement} is that
\begin{equation}\label{eqqqqqqq}
\rho(\pi)(V\otimes E)=V^r\otimes E
\end{equation}
where $V^r= \ker \left( V \to V_p(A_{\kappa(\mathfrak{p})}) \right)$. By assumption, $W$ modulo $V^r$ is in the naïve Tate module of $\mathfrak{B}$, which is $u\cdot V_p(A_{\kappa(\mathfrak{p})})$. Thanks to (3) of Proposition \ref{thingy statement}, we may identify the $W\otimes E$, modulo $V^r\otimes E$, with $\rho (u)\cdot V \otimes E$.

Thanks to the definition of $\pi$ from \eqref{defofpi}, and \eqref{eqqqqqqq}, we have an inclusion, modulo $V^r$:
\begin{equation}
\im (w) \otimes E \subseteq  \im (\rho (u)).
\end{equation}
Proposition \ref{keylemma22} guarantees the existence of an idempotent $v\in \End (A_{K})\otimes\Q$ such that 
\begin{displaymath}
w\cdot  V\otimes E+ V^r\otimes E \subseteq v\cdot V \otimes E + V^r\otimes E\subseteq \rho (u) \cdot V\otimes E + V^r\otimes E.
\end{displaymath}
The image of $v$ gives, up to a quasi-isogeny, the desired abelian subvariety $B\subset A$, satisfying 
\begin{equation}
W_{\mathfrak{p}} \subset T_p(B_\mathfrak{p})\subset T_p(\mathfrak{B}).
\end{equation}

For the final part of the statement, assume that $ u\cdot V_p(\mathcal{A}_\mathfrak{p})=V_p(\mathfrak{B})$ where $\mathfrak{B} \subset \mathcal{A}_\mathfrak{p}$ is the smallest abelian subvariety such that $W_\mathfrak{p} \subset V_p(\mathfrak{B})$.
By the above construction of $B$ (and $B_\mathfrak{p}$) we must have $T_p(B_{\mathfrak{p}})=T_p(\mathfrak{B})$ and $\mathfrak{B} \subset B_\mathfrak{p} $. Therefore $B_\mathfrak{p} [p^\infty]= \mathfrak{B}[p^\infty]$, and so the quotient abelian variety $B_\mathfrak{p}/ \mathfrak{B}$ is of $p$-rank zero, concluding the proof of Theorem \ref{thm502}.

\subsection{Proof of Theorem \ref{pureppart}}\label{proofofthefinalpthm}
Theorem \ref{pureppart} will be proven in Section \ref{finalproof}, after we describe a version of Proposition \ref{prop222} and we prove the the $p$-power order case of the theorem.

Recall that~$A$ is an abelian variety over a number field~$K$ such that~$\End_K(A)=\End_{\overline{K}}(A)$, and that~$\mathcal{A}_\mathfrak{p}$ denotes its reduction at a 
prime~$\mathfrak{p
}$ of good reduction. We denote by $\operatorname{red}$ the map of~$\Q_p$-Tate modules
\begin{displaymath}
\operatorname{red} : V_p (A)\to V_p(\mathcal{A}_\mathfrak{p}).
\end{displaymath}

Let us introduce the~$\Gal(\overline{K}/K)$-invariant~$\Q_p$-linear subspace
\[
L:=\bigcap_{\sigma\in \Gal(\overline{K}/K)} \sigma(\ker (\operatorname{red}))\subset V_p(A).
\]
In other words,~$L$ is the biggest~$\Gal(\overline{K}/K)$-invariant subspace contained in~$\ker (\operatorname{red})$.
Observe that the $\Gal(\overline{K}/K)$-invariant subpaces  of~$V_p(A)$ are also the images of the endomorphisms~$u\in \End_{K}(A)\tens\Q_p$ acting on~$V_p(A)$.
Since~$\End_K(A)=\End_{\overline{K}}(A)$, we have, for any finite extension~$E/K$, 
\begin{equation}\label{eq:L for E}
L=\bigcap_{\sigma\in \Gal(\overline{K}/E)} \sigma(\ker (\operatorname{red}))\subset  V_p(A).
\end{equation}

By semisimplicity of the Galois action (Theorem \ref{tatethm}), there exists a~$\Gal(\overline{K}/K)$-invariant~$\Q_p$-linear subspace~$L'\subset V_p(A)$
such that
\[
V_p(A)=L\oplus L'.
\]

Using the identification
\[
A[p^\infty]=V_p(A)/T_p(A)
\]
we define the subgroups
\begin{equation}\label{def:lambda prime}
\Lambda=L/T_p(A)\text{ and }\Lambda'=L'/T_p(A).
\end{equation}
We have~$\Lambda\simeq (\Q_p/\Z_p)^{\dim(L)}$ and~$\Lambda'\simeq (\Q_p/\Z_p)^{\dim(L')}$.

Since~$V_p(A)=L+ L'$ we have~$\#\Lambda\cap \Lambda'<+\infty$ and
\[
A[p^\infty]=\Lambda+\Lambda'.
\]

For~$a\in A[p^\infty]$ decomposed as~$a=a'+l$ with~$a'\in \Lambda'$ and~$l\in\Lambda$, 
we have
\[
\pi_\mathfrak{p}(\Gal(\overline{K}/K)\cdot a)=\pi_\mathfrak{p}(\Gal(\overline{K}/K)\cdot a').
\]

The following is a version of Proposition \ref{prop222} that works also for points of order divisible by $p$, the main difference is in the definition of the $D_i$. Here, by abuse of notation, we denote by $\operatorname{red}$ the map
\begin{displaymath}
\operatorname{red} : T_p (A)\to T_p(\mathcal{A}_\mathfrak{p}).
\end{displaymath}

\begin{prop}\label{prop2222}Let~$\Lambda'$ be as in~\eqref{def:lambda prime}, $(a_n)_n\subset \Lambda'$ and $(E_n)_n= \pi_{\mathfrak{p}}(\Gal(\overline{K}/K)\cdot a_n)$.
Let~$\mathfrak{B}_1,\ldots,\mathfrak{B}_C$  be abelian subvarieties of $\mathcal{A}_\mathfrak{p}$.
For $i=1, \dots , C$, let~$D_i\subset A$ be the biggest abelian subvariety such that~$T_p(D_{i,\mathfrak{p}})\subseteq T_p(\mathfrak{B}_i)$.

Assume that, for some~$\beta_1 ,\ldots,\beta_C$ in~$\mathcal{A}_{\mathfrak{p}}$, 
\[
\bigcup_n E_n\subseteq  \bigcup_{i=1}^C \beta_i +\mathfrak{B}_i.
\]
Then there exists a finite set $F\subset A$ such that for all $n$
\begin{displaymath}
a_n \in F +\bigcup_{i=1}^C D_i.
\end{displaymath}

\end{prop}
The proof of the above, follows the strategy of the proof of Proposition \ref{prop222}.
\begin{proof}
By multiplying by the product of the orders of the $\beta_i$, we may assume that every $\beta_i=0$. 

For every $n$ we can choose an $1 \leq i \leq C$ such that 
\begin{displaymath}
\# \{(\Gal (\overline{K}/K) \cdot a_n) \cap  \pi_{\mathfrak{p}}^{-1}(\mathfrak{B}_i) \} \geq \frac{1}{C} \# \{ \Gal (\overline{K}/K) \cdot a_n\}.
\end{displaymath}

We decompose the sequence~$(a_n)_n$ into finitely many subsequences according to the corresponding~$i$. 
We may argue with each subsequence independently. Without loss of generality we may assume that~$i$ 
does not depend on~$n$. We write $\mathfrak{B}= \mathfrak{B}_i$ and~$D=D_i$, and we have $\pi_{\mathfrak{p}}(a_n) \in \mathfrak{B}$, for every $n$.

We will prove that there exists a finite set~$F\subseteq A$ such that~$a_n\in F+D$ for all~$n$. The Proposition~\ref{prop2222}  will follow.

Heading for a contradiction, we assume that~$\ord (a_n + D) \to + \infty$, as $n\to + \infty$. As in (1) Proposition \ref{prop222}, we can construct a non-zero $a_\infty \in V_p (A/D)$, and a finite extension $E/K$ such that 
\begin{displaymath}
\Gal (\overline{K}/{E})\cdot a_\infty \subset \operatorname{red}^{-1}V_p(\mathfrak{B}+D/D).
\end{displaymath}
Since~$a_n\in \Lambda'$ for every~$n$, we have~$a_\infty\in L'$.
Let~$W\subset V_p(A)$ be the~$\Q_p$-linear subspace generated by~$\Gal (\overline{K}/{E})\cdot a_\infty$.

Since~$L'$ is~$\Gal (\overline{K}/{E})$-invariant, we have~$W\subset L'$. It implies~$W\cap L=\{0\}$.
By~\eqref{eq:L for E} we have,
\[
\forall w\in W\smallsetminus\{0\}, \pi_{\mathfrak{p}}(\Gal (\overline{K}/{E})\cdot w)\neq \{0\}.
\]
For~$w=a_\infty$, we get
\[
W_{\mathfrak{p}}:=\operatorname{red}(W)\neq \{0\}.
\]

Thanks to Theorem \ref{thm502}, there exits $A''\subset A/D$ such that 
\begin{displaymath}
W_{\mathfrak{p}}\subset \operatorname{red} V_p(A''_{\mathfrak{p}})\subset V_p(\mathfrak{B}+D_{\mathfrak{p}}/D_{\mathfrak{p}}).
\end{displaymath}

As~$W_{\mathfrak{p}}\neq\{0\}$, we must have~$A''\neq 0$.

Let~$D \subset  D' \subset  A$ be the abelian subvariety such that~$D'/D=A''$. Then~$V_p(D'_\mathfrak{p})\subset V_p(\mathfrak{B})$
and~$D<D'$. This contradicts the maximality property in the definition of~$D$. The proof of the proposition is completed.

\end{proof}
\newcommand{\ol}[1]{\overline{#1}}

\subsubsection{The $p$-power order case of Theorem~\ref{pureppart}}
\begin{prop}\label{prop:particular}
Let~$a_n$ be a generic sequence in~$A$ such that~$a_n\in\Lambda'$ for every~$n$,
and let~$E_n=\pi_{\mathfrak{p}}(\Gal(\overline{\Q}/K)\cdot a_n)$.
Then
\[
\ol{\bigcup_{n\in\Z_{\geq0}} E_n}^{\Zar}=\overline{\mathcal{A}_{\mathfrak{p}}[p^{\infty}]}^{\Zar}.
\]
\end{prop}
\begin{proof}Since every $E_n$ lies in $A_{\mathfrak{p}}[p^{\infty}]$, we have $\ol{\bigcup_{n\in\Z_{\geq0}} E_n}^{\Zar}\subseteq \overline{\mathcal{A}_{\mathfrak{p}}[p^{\infty}]}^{\Zar}$. We now prove the other inclusion. Thanks to Theorem~\ref{mainthm} we may write, in~$\mathcal{A}_{\mathfrak{p}}$,
\[
\overline{\bigcup_{n\in\Z_{\geq0}} E_n}^{\Zar}= \bigcup _{i=1}^C \beta_i+\mathfrak{B}_i.
\]
For each~$i$, there exists~$n$ and~$\alpha\in E_n$ such that~$\alpha+\mathfrak{B}_i=\beta_i+\mathfrak{B}_i$.
Therefore, we may assume that~$\beta_i\in \mathcal{A}_{\mathfrak{p}}[p^{\infty}]$.

Thanks to Proposition \ref{prop2222}, there is a finite set $F$ such that
\begin{equation}\label{from prop2222}
a_n \in F +\bigcup_{i=1}^C D_i.
\end{equation}

As~$a_n$ is generic in~$A$, we have~$D_i=A$ for some~$i$. By definition of~$D_i$, we have
\[
\mathcal{A}_{\mathfrak{p}}[p^\infty]={\mathcal{D}_i}_{\mathfrak{p}}[p^\infty]\subseteq \mathfrak{B}_i.
\]
As~$\beta_i\in\mathcal{A}_{\mathfrak{p}}[p^{\infty}]$, we have~$\mathcal{A}_{\mathfrak{p}}[p^\infty]\subseteq \beta_i+\mathfrak{B}_i$. Thus
\[
\overline{\mathcal{A}_{\mathfrak{p}}[p^\infty]}^{\Zar}\subseteq \beta_i+\mathfrak{B}_i\subseteq \overline{\bigcup_{n\in\Z_{\geq0}} E_n}^{\Zar}.
\] 
We proved the conclusion of Proposition~\ref{prop:particular} by double inclusion.
\end{proof}

\subsubsection{Factoring the prime-to-$p$ order part and the power-of-$p$ order part}
\begin{lemma}\label{lem:2024}
Let~$A$ be an abelian variety over an algebraically closed field (in any characteritic). Let~$p$ 
be a prime and let~$\Sigma\subseteq A\times A$ be a subset such that
\[
\Sigma\subseteq \left(\sum_{\ell \neq p} A[\ell^{\infty}]\right)\times A[p^{\infty}].
\]
We denote by~$\pi_1,\pi_2:A\times A\to A$ the first and second projections.

If there exist $b_1,\ldots,b_k\in A_{\operatorname{tors}}\times A_{\operatorname{tors}}$ and abelian subvarieties
$B_1,\ldots,B_k\subset A\times A$ such that
\begin{equation}\label{lem2024eq1}
\ol{\Sigma}^{\Zar}=\bigcup_{i=1}^k b_i+B_i,
\end{equation} 	
and such that
\begin{equation}\label{lem2024eq2}
\text{each~$b_i+B_i$ is an irreducible component of~$\ol{\Sigma}^{\Zar}$.}
\end{equation}
Then each $B_i$ is equal to $ \pi_1(B_i)\times\pi_2(B_i)$.
\end{lemma}
\begin{proof} We first reduce the proof to the case~$k=1$ and~$b_i=0$. For each~$i\in\{1;\ldots;k\}$, define~$\Sigma_i:=\Sigma\cap (b_i+B_i)$. Then~\eqref{lem2024eq2} implies that~$\ol{\Sigma_i}^{\Zar}=b_i+B_i$. In particular there exists an element~$\sigma_i\in \Sigma_i$. We then have~$\sigma_i+B_i=b_i +B_i$.
Then the set~$\Sigma'_i=\{\sigma-\sigma'_i\,|\,\sigma\in \Sigma_i\}$ satisfies~\eqref{lem2024eq1} and~$\ol{\Sigma'_i}^{\Zar}=B_i$.

It is enough to prove, for each~$i=1,\ldots,k$, the Lemma~\ref{lem:2024} for~$\Sigma=\Sigma'_i$.

We now assume~$k=1$ and~$b_i=0$ and write~$B:=B_1$.

For every~$(b_1,b_2)\in \Sigma$, we have~$\gcd\{\operatorname{ord}(b_1);\operatorname{ord}(b_2)\}=1$. There exists thus~$m_1,m_2\in\Z$ such that~$m_1\cdot\operatorname{ord}(b_1)+m_2\cdot\operatorname{ord}(b_2)=1$. Let~$m:=m_2\cdot \operatorname{ord}(b_2)$. Then~$m\equiv1\pmod{\operatorname{ord}(b_1)}$ and
\[
(\pi_1(b_1,b_2),0)=(b_1,0)=m\cdot (b_1,b_2)\in m\cdot B=B.
\]
We deduce~$\pi_1(\Sigma)\times \{0\}\subseteq B$.
We have
\[
\ol{\pi_1(\Sigma)}^{\Zar}
=
\ol{\pi_1(\ol{\Sigma}^{\Zar})}^{\Zar}
=
\ol{\pi_1(B)}^{\Zar}=\pi_1(B).
\]
Thus~$\pi_1(B)\times\{0\}\subseteq B$. Similarly~$\{0\}\times\pi_2(B)\subseteq B$. In particular
\[
\pi_1(B)\times\pi_2(B)
=
\pi_1(B)\times\{0\}
+
\{0\}\times\pi_2(B)
\subseteq B+B=B.
\]
Obviously~$B\subseteq \pi_1(B)\times\pi_2(B)$.
We conclude~$B=\pi_1(B)\times\pi_2(B)$.
\end{proof}
\subsubsection{General case of Theorem~\ref{pureppart}}\label{finalproof}
Recall that we have isomorphisms
\begin{align}
A_{\operatorname{tors}}&\simeq
\left(
\bigoplus_{\ell\neq p}A[\ell^{\infty}]\right)\oplus A[p^\infty]
\\
&\simeq
\left(
\sum_{\ell\neq p}A[\ell^{\infty}]\right)\oplus \left(\Lambda+\Lambda'\right).
\end{align}
Accordingly, we can decompose each element of the sequence of torsion points~$a_n=a'_n+\lambda_n+\lambda'_n$. For every~$\sigma\in \Gal(\ol{K}/K)$ and every~$n$, we have
\[
\pi_{\mathfrak{p}}(\sigma(a_n-\lambda_n))=
\pi_{\mathfrak{p}}(\sigma(a_n))-
\pi_{\mathfrak{p}}(\sigma(\lambda_n))=
\pi_{\mathfrak{p}}(\sigma(a_n))
\]
and thus~$E_n=\pi_\mathfrak{p}(\Gal(\ol{K}/K)\cdot a_n)=\pi_\mathfrak{p}(\Gal(\ol{K}/K)\cdot( a_n-\lambda_n))$.
In proving Theorem~\ref{pureppart}, we may thus assume~$\lambda_n=0$ for every~$n$.

We consider~$\wt{a}_n:=(a'_n,\lambda'_n)\in A\times A$ and~$\wt{E}_n:=\pi_\mathfrak{p}(\Gal(\ol{K}/K)\cdot \wt{a}_n)$. Let~$\alpha:A\times A\to A$ be the addition map. We have
\[
\alpha(\wt{E}_n)=E_n.
\]
Since~$\alpha$ is a proper map, we have
\[
\alpha\left(\ol{\bigcup_{n\in\Z_{\geq0}} \wt{E}_n}^{\Zar}\right)=
\ol{\bigcup_{n\in\Z_{\geq0}} \alpha(\wt{E}_n)}^{\Zar}=
\ol{\bigcup_{n\in\Z_{\geq0}} {E}_n}^{\Zar}.
\]
Therefore, it is enough to prove Theorem~\ref{pureppart} for the sequence~$\wt{a}_n$ instead of the sequence~$a_n$.

Thanks to Theorem~\ref{mm}, we have
\[
\ol{\{\wt{a}_n\,|\,n\in\Z_{\geq0}\}}^{\Zar}=\bigcup_{i=1}^k d_i+D_i\subset A_K\times A_K
\]
for some~$d_1,\ldots,d_k\in A(\ol{K})_{\operatorname{tors}}\times A(\ol{K})_{\operatorname{tors}}$ and abelian subvarieties~$D_1,\ldots, D_k\subset (A\times A)_{\ol{K}}$. We assume that each~$d_i+D_i$ is actually a component of~$\ol{\{\wt{a}_n\,|\,n\in\Z_{\geq0}\}}^{\Zar}$ (the Zariski closure here is taken in $A_K\times A_K$). For every~$i$, we may assume that~$d_i\in \bigcup_{n\in\Z_{\geq0}} \Gal(\ol{K}/K)\cdot \wt{a}_n\subseteq\left(\bigoplus_{\ell\neq p}A[\ell^{\infty}]\right)\times \Lambda'$ and
\begin{equation}\label{pasdenom}
\forall n\in\Z_{\geq0},i\in\{1;\ldots;k\},\wt{a}_n-d_i\in \left(\bigoplus_{\ell\neq p}A[\ell^{\infty}]\right)\times \Lambda',
\end{equation}
 where $\Lambda '$ was introduced in \eqref{def:lambda prime}.

By Lemma~\ref{lem:2024}, we have, for each $i=1, \dots, k$,  $D_i=\pi_1(D_i)\times \pi_2(D_i)$.

We define
\begin{displaymath}
\Delta_i:=\pi_1(D_i)_{\mathfrak{p}}, \ \Delta'_i:=\ol{\pi_2(D_i)_{\mathfrak{p}}[p^{\infty}]}^{\Zar}, \delta_i:=\pi_\mathfrak{p}(d_i).
\end{displaymath}

Theorem~\ref{pureppart} for the sequence~$(\wt{a}_n=(a'_n,\lambda'_n))_n$ is a direct consequence of the following. 
\begin{prop}\label{prop:2024Claim}
We have
\begin{equation}\label{2024Claim}
\ol{\bigcup_{n\in\Z_{\geq0}} \wt{E}_n}^{\Zar}=\bigcup_{i=1}^{k} \delta_i+\Delta_i\times\Delta'_i.
\end{equation}
\end{prop}
\begin{proof}[Proof of Proposition~\ref{prop:2024Claim}]
We can find a finite extension $K'/K$, such that
\begin{equation}
d_1,\ldots,d_k\in A(K')
\end{equation}
and
\begin{equation}\label{2024ell-indep}
\forall\sigma',\sigma''\in \Gal(\ol{K}/K'),\exists\sigma\in \Gal(\ol{K}/K'), \forall n\in\Z_{\geq0}, \sigma(a'_n,\lambda'_n)=(\sigma'(a'_n),\sigma''(\lambda'_n)).
\end{equation}
The latter is an ``$\ell$-independence property'' and follows from \cite[N.136, \S2, Th. 1 p. 34]{SeOe4}. Since, as usual, to prove the claim we may replace $K$ by a finite extension, we simply denote $K'$ by $K$.

The inclusion~$\ol{\bigcup_{n\in\Z_{\geq0}} \wt{E}_n}^{\Zar}\subseteq\bigcup_{i=1}^{k} \delta_i+\Delta_i\times\Delta'_i$
is immediate. We need to prove that, for every~$i\in\{1;\ldots;k\}$, we have
\begin{equation}\label{toproveinclaim2024}
\delta_i+\Delta_i\times\Delta'_i\subseteq \ol{\bigcup_{n\in\Z_{\geq 0}} \wt{E}_n}^{\Zar}.
\end{equation}
We fix~$i$ in~$\{1;\ldots;k\}$.
In proving~\eqref{toproveinclaim2024}, we can always pass to a subsequence of~$\wt{a}_n$.
Without loss of generality, we may assume that~$\dim(D_i)>0$. Then passing to an infinite subsequence of~$\wt{a}_n$,
we may assume that~$\wt{a}_n\in d_i+D_i$ for every~$n$, and that~$\wt{a}_n$ is a generic sequence in~$d_i+D_i$.
By~\eqref{pasdenom}, we may translate by~$-d_i$ and assume~$d_i=0$.
Therefore the sequences~$\pi_1(\wt{a}_n)$ and~$\pi_2(\wt{a}_n)$ are generic in~$\pi_1(D_i)$ and~$\pi_2(D_i)$ respectively. 

By Corollary~\ref{cor 1.2.5}, the subset
\[
\bigcup_{n\in\Z_{\geq0}}\pi_{\mathfrak{p}}(\pi_1(\Gal(\ol{K}/K)\cdot \wt{a}_n))\subseteq \Delta_i
\]
is Zariski dense. Passing to a subsequence, there exists a sequence~$\sigma'_n$ in~$\Gal(\ol{K}/K)$
such that $\pi_{\mathfrak{p}}(\pi_1(\sigma'_n(\wt{a}_n)))$ is generic in~$\Delta_i$.

Proposition~\ref{prop:particular} shows that
\[
\bigcup_{n\in\Z_{\geq0}}\pi_{\mathfrak{p}}(\pi_2(\Gal(\ol{K}/K)\cdot \wt{a}_n))\subseteq \Delta'_i
\]
is Zariski dense in $\Delta'_i$. Passing to a subsequence, there exists a sequence~$(\sigma''_n)n$ in~$\Gal(\ol{K}/K)$
such that $(\pi_{\mathfrak{p}}(\pi_2(\sigma''_n(\wt{a}_n))))_n$ is generic in~$\Delta'_i$.

By~\eqref{2024ell-indep}, there exist~$\sigma_n\in \Gal(\ol{K}/K)$ be such that
\[
\forall n, \sigma_n(a'_n,\lambda'_n)=(\sigma'_n(a'_n),\sigma''_n(\lambda'_n)).
\]

By Theorem~\ref{mainthm}, we can write
\[
\ol{\bigcup_{n\in\Z_{\geq0}} \wt{E}_n}^{\Zar}=\bigcup_{j=1}^m \beta_j+\mathfrak{B}_j,
\]
and by Lemma~\ref{lem:2024}, we have
\[
\forall j\in\{1;\ldots;m\}, \mathfrak{B}_j=\pi_1(\mathfrak{B}_j)\times \pi_2(\mathfrak{B}_j).
\]
Passing to an infinite subsequence, there exists~$j$ such that
\[
\forall n,\pi_{\mathfrak{p}}(\sigma_n(\wt{a}_n))\in \beta_j+\mathfrak{B}_j.
\]
Thus, the sequence~$\pi_{\mathfrak{p}}(\pi_1(\sigma_n(\wt{a}_n)))=\pi_{\mathfrak{p}}(\sigma'_n(a'_n))$ in~$\pi_1(\beta_j+\mathfrak{B}_j)$
is dense is~$\Delta_i$. Thus
\[
\Delta_i\subseteq \beta_j+\mathfrak{B}_j.
\]
On the other hand, we have~
\[
\beta_j+\mathfrak{B}_j\subseteq \bigcup_{j=1}^m \beta_j+\mathfrak{B}_j=\ol{\bigcup_{n\in\Z_{\geq0}} \wt{E}_n}^{\Zar}\subseteq \Delta_i\times\Delta'_i.
\]
Thus~$\pi_1(\mathfrak{B}_j)=\Delta_i=\pi_1(\beta_j+\mathfrak{B}_j)$. Similarly,~$\pi_2(\mathfrak{B}_j)=\Delta'_i=\pi_2(\beta_j+\mathfrak{B}_j)$. Therefore
\[
\Delta_i\times\Delta'_i=\pi_1(\mathfrak{B}_j)\times \pi_2(\mathfrak{B}_j)=\mathfrak{B}_j
\subseteq \ol{\bigcup_{n\in\Z_{\geq0}} \wt{E}_n}^{\Zar}.
\]
This proves~\eqref{toproveinclaim2024} and concludes the proof of Proposition~\ref{prop:2024Claim}.
\end{proof}
This implies Theorem~\ref{pureppart} for the sequence~$\wt{a}_n$, and therefore Theorem~\ref{pureppart} for the sequence~$a_n$.

\appendix
\section{Proof of Proposition \ref{thingy statement}}\label{preliminaries}
\subsection{Some \texorpdfstring{$p$}{p}-adic Hodge theory (for abelian varieties with good reduction)}
To conclude the paper we have to prove Proposition \ref{thingy statement}, which was used in Theorem \ref{thm502}. Here we try to make our argument as understandable as possible, by recalling some standard facts, but the reader fluent in $p$-adic Hodge theory may simply skip to the proof given in Section \ref{finalsectionp}.
\subsubsection{Notation}\label{thingy notations}
Let $p$ be a rational prime number, $L$ be a finite extension of $\Q_p$, $\Oo_L$ its ring of integers and $k$ its residue field. Let $\mathcal{A}$ be an abelian scheme over $\Oo_L$, of relative dimension $g>0$, and denote its special and generic fibres respectively by $A_k$ and $A_L$. We choose $\overline{L}$ an algebraically closed extension of $L$ and denote $\overline{k}$ 
the corresponding residue field extension of $k$, so that we can define the $\Z_p$-linear Tate modules:
\begin{equation}\label{thingy T module}
T_p(A_k):=\Hom(\Q_p/\Z_p,A_k[p^\infty](\kbar))\text{ and }T_p(A_L):=\Hom(\Q_p/\Z_p,A_L[p^\infty](\overline{L}))
\end{equation}
The former is the \emph{naïve} Tate module (accounting for the $p$-\emph{rank} of $A$), whereas $T_p(A_L)$ has rank $2g$.

We denote by $\QQbar$ the algebraic closure of $\Q$ inside of $\Lbar$, so that the field $\QQbar\cap \Q_p$ is properly defined.

We write the corresponding $\Q_p$-linear Tate modules as follows
\[
V_p(A_k)=T_p(A_k)\tens_{\Z_p}\Q_p\simeq \Hom(\Q_p,A_k[p^\infty](\kbar)),
\]
and 
\[
V_p(A_L)=T_p(A_L)\tens_{\Z_p}\Q_p\simeq \Hom(\Q_p,A_L[p^\infty](\Lbar)).
\]

We are interested in the reduction map $A_L(\Lbar)\to A_k(\kbar)$ induced by
\[
A_L(\Lbar)\xleftarrow{\sim}\mathcal{A}(\Oo_{\Lbar})\xrightarrow{}A_k(\kbar).
\]

Passing to the Tate modules there is a reduction map $T_p(A_L)\to T_p(A_k),$
which is equivariant with respect to the covariant action of $\End(A_L)\subseteq \End(A_{\Lbar})$ whereas
\begin{displaymath}
\End(A_k)\subseteq \End(A_{\kbar})
\end{displaymath}
acts a priori only on $T_p(A_k)$.

In $\End(A_k)$ there is a specific element, the Frobenius of $A_k$ relative to $k$,
\begin{equation}\label{thingy frob of Ak}
\FrobAk:A_k\to A_k
\end{equation}
associated to the $q$-power map, where $q=|k|$. On coordinates $A(\kbar)\to A(\kbar)$ it acts with the Frobenius map
\[\Frob_{\kbar/k}:\kbar\xrightarrow{x\mapsto x^q} \kbar.\]
 (This is just the morphism of local ringed spaces given by the identity at the topological level and the $|k|$-power at the level of structural sheaves.)

\subsubsection{Witt vectors notations}
Two standard references for crystalline cohomology are  \cite{zbMATH03467279, zbMATH03595321}. Let $\Wittk$ be the ring of Witt vectors with coefficients in $k$ (and $L_0=\Q_q$ its fraction field), and $F:=\WittF,V:=\WittV\in \End_{W(\FF_p)}(W(k))$ the \emph{Frobenius} and
\emph{Verschiebung} operators. 

In practice: the first acts on the Witt coordinates by applying on each coordinate the $p$-th power map $\Frob_{k/\FF_p}$, 
the second acts by shifting those coordinates. 


Of relevance is the Dieudonné ring $\Dieukfull$ as the (non commutative) $\Wittk$-algebra of additive 
endomorphisms of $\Wittk$ generated by $\Wittk$ acting by ring multiplication, and $\WittF$ and $\WittV$ just defined. One usually recovers $V$ from $F\circ V=p=V\circ F$.

\subsection{Crystalline-étale comparison theorem}
On the category of $\Oo_L$-abelian schemes $\mathcal{A}$, we consider the crystalline and $p$-adic étale cohomologies 
as contravariant functors
\begin{equation}\label{Cohomologies introduction}
\CoHcrys(A_k)\text{ and }\CoHet(A_{\Lbar},\Z_p).
\end{equation}

The first is a free $\Wittk$-module of rank $2g$ functorial in the special fibre $A_k$, and comes equipped with
a $\Wittp$-linear endomorphism $\phi\in \End_{\Wittp}(\CoHcrys(A_k))$ which is semi-linear with
respect to $\WittF$: mapping $\WittF$ to $\phi$ makes $\CoHcrys(A_k)$ a $\Dieukfull$-module which is free as a $\Wittk$-module.
 

The following comparison theorem holds for whichever of the Fontaine rings $E=B_{\text{dR}}$ (and then generalises to varieties, according to Faltings) or, sufficiently for abelian schemes, $E=B_{\crys}$. Those constructions depend a priori on $L$ and $\Lbar$, but the resulting rings not, up to the relevant isomorphism. The construction of period rings, with all details, can be found in \cite{zbMATH03865471}. We following is a specail case of the so called $C_{\operatorname{crys}}$-theorem (for $H^1s$ and abelian varieties with good reduction). 
The general case is due to Faltings \cite{zbMATH00002230}.

\begin{thm}\label{abelian comparison}
There is a field extension $E$ of the fraction field $L_0$ of $W(k)$ and a \emph{natural} isomorphism
\begin{equation}\label{eqcrys2}
\gamma_{\operatorname{crys}}(A): E\otimes_{W(k)} \CoHcrys(\mathcal{X}_k)\xrightarrow{\sim} E\otimes_{\Z_p} \CoHet(\mathcal{X}_{\Lbar} ,\Z_p).
\end{equation}
\emph{of functors} from the category of smooth projective varieties $\mathcal{X}$ over $\Oo_L$
to that of $E$-vector spaces.
\end{thm}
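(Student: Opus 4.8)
The plan is to deduce Theorem \ref{abelian comparison} from the classical comparison isomorphism for the $p$-divisible group $\mathcal{A}[p^\infty]$ over $\Oo_L$, invoking the general $C_{\crys}$-theorem of Faltings \cite{zbMATH00002230} only to pin down the period ring and to record naturality. One takes $E=B_{\mathrm{dR}}$, a complete discretely valued field extension of $L_0$; for abelian schemes one may equally take $E=B_{\crys}$, which moreover carries the Frobenius needed later in Proposition \ref{thingy statement}. The construction of these rings from $\Lbar$, which is independent of the choice of $\Lbar$ up to canonical isomorphism, is recalled in \cite{zbMATH03865471}.

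\emph{Reinterpreting both sides.} For an abelian scheme $\mathcal{A}/\Oo_L$ there are canonical, functorial identifications
\[
\CoHcrys(A_k)\;\cong\;\mathbb{D}\bigl(\mathcal{A}_k[p^\infty]\bigr),\qquad \CoHet(A_{\Lbar},\Z_p)\;\cong\;T_p\bigl(\mathcal{A}_L[p^\infty]\bigr)^{\vee},
\]
the first being the crystalline incarnation of contravariant Dieudonné theory, compatible with the crystalline Frobenius $\phi$ and with $\WittF$ (see \cite{zbMATH03595321}), the second being the usual presentation of $\CoHet$ of an abelian variety as the $\Z_p$-dual of its Tate module. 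Both are functorial in the $p$-divisible group $\mathcal{A}[p^\infty]$, hence in $\mathcal{A}$.

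\emph{Periods of $p$-divisible groups, and conclusion.} For any $p$-divisible group $G/\Oo_L$ there is a natural $E$-linear isomorphism $E\tens_{\Wittk}\mathbb{D}(G_k)\xrightarrow{\ \sim\ }E\tens_{\Z_p}T_p(G)^{\vee}$, compatible with Frobenius (for $E=B_{\crys}$) and with filtrations; this is Fontaine's comparison for $p$-divisible groups, obtainable from Grothendieck--Messing theory together with the crystalline nature of the universal vector extension, or read off as the weight-one, good-reduction case of Faltings's $C_{\crys}$-theorem. A dimension count (both sides free of rank $2g$) shows it is an isomorphism. Applying this to $G=\mathcal{A}[p^\infty]$ and inserting the identifications above yields the natural isomorphism
\[
\gamma_{\crys}(A)\colon\; E\tens_{\Wittk}\CoHcrys(A_k)\;\xrightarrow{\ \sim\ }\;E\tens_{\Z_p}\CoHet(A_{\Lbar},\Z_p),
\]
naturality in $A$ being inherited from that of each constituent arrow; the general statement for arbitrary smooth projective $X/\Oo_L$ is Faltings's full theorem \cite{zbMATH00002230}, but only the abelian case is used in this paper.

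\emph{Main obstacle.} The genuine input is the period isomorphism for $p$-divisible groups; once it is granted the rest is bookkeeping. In writing out the details the two delicate points are the compatibility of the crystalline Frobenius $\phi$ with $\WittF$ under the Dieudonné identification --- which is precisely what later lets $\FrobAk$ be transported to an element of $\End_E(V_p(A_L)\tens_{\Q_p}E)$ in Proposition \ref{thingy statement} --- and the careful tracking of the Tate twist relating $T_p^{\vee}$ to $\CoHet$, which is absorbed here because $E$ contains the period $t$ trivialising $\Q_p(1)$.
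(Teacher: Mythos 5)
The paper gives no independent proof of Theorem \ref{abelian comparison}: it is invoked purely by citation, as the $H^1$, good-reduction special case of Faltings's $C_{\crys}$-theorem \cite{zbMATH00002230}, with the period rings taken from Fontaine \cite{zbMATH03865471}; only afterwards does the paper state the comparison for $p$-divisible groups (Theorem \ref{pdiv comparison}), again by citation, as a factorisation of the abelian one. Your proposal is correct at the same level of rigor but reverses the logical order: you take the period isomorphism for $p$-divisible groups (Fontaine, Grothendieck--Messing, universal vector extension) as the primitive input, identify $\CoHcrys(A_k)$ with the Dieudonné module of $\mathcal{A}_k[p^\infty]$ and $\CoHet(A_{\Lbar},\Z_p)$ with $T_p(\mathcal{A}_L[p^\infty])^\vee$, and deduce the abelian-scheme case, deferring only the "functor on all smooth projective varieties over $\Oo_L$" clause of the statement to Faltings. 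This buys a more economical and self-contained route for what the paper actually uses (in particular it hands you directly the Frobenius equivariance exploited in Propositions \ref{thingy statement} and \ref{thingy statement precise}), at the cost that, as literally stated, the theorem's naturality on the full category of smooth projective varieties is not obtained from the $p$-divisible group argument alone --- which you correctly acknowledge, exactly as the paper does. Two minor caveats: a dimension count by itself does not show the period morphism becomes an isomorphism after tensoring with $B_{\crys}$ (one needs Fontaine's finer statement that it is injective with cokernel killed by the period $t$, or simply quote the comparison as a whole, which is what you effectively do); and there is no Tate twist to track between $\CoHet(A_{\Lbar},\Z_p)$ and $T_p(A_L)^\vee$ --- the twist enters only on the crystalline side of the period map, where, as you note, it is harmless since $t$ is invertible in $E$.
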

The next remark explains the reason why we need to consider $B_{\operatorname{crys}}$-coefficients (rather than $\C_p$).
\begin{rmk} 
Originally Tate \cite{zbMATH03252949} proved the so called Hodge-Tate decomposition for the first étale cohomological group of abelian varieties. That is that the existence of a canonical equivariant isomorphism
\begin{displaymath}
H^1_{\text{et}}(A_{\overline{L}},\Q_p)\otimes \C_p \cong H^1(A,\Oo_A)\otimes \C_p(-1)\oplus H^0(A,\Omega^1)\otimes \C_p.
\end{displaymath}
By the degeneration of the Hodge to de Rham spectral sequence in characteristic zero, the RHS is isomorphic to $H^1_{dR}(A/\C_p)$. The problem is that an such isomorphism is, in general, not canonical. 
\end{rmk}
On the homology side, we get the covariant $\Wittk$-dual and $\Z_p$-dual functors
\[
\Hcrys(A_k):=\Hom_{\Witt(k)}(\CoHcrys(A_k),\Wittk)
\text{ and }
\Het(A_{\overline{L}}):=\Hom_{\Z_p}(\CoHet(A_{\overline{L}},\Z_p))
\]
and, passing to the $E$-linear duals, a natural isomorphism
\begin{equation}\label{eqcrys3}
\gamma^\vee_{\operatorname{crys}}(A): E\otimes_{W(k)} H_1^{\operatorname{crys}}(A_k)\xleftarrow{\sim} E\otimes_{\Z_p} H_1^{\operatorname{et}}(A_{\overline{L}},\Z_p).
\end{equation}

\subsection{The connected-étale exact sequence}
We consider the reduction map
\begin{equation}\label{thingy reduction map}
\reduction:A(\overline{L})[p^\infty]\to A(\overline{k})[p^\infty],
\end{equation}
in terms of the connected-étale exact sequence. 

We first recall some notions from $p$-divisible groups and Dieudonné theory, following Demazure's classical reference \cite{zbMATH03390951} (both over $L$ and $k$). To the abelian scheme $\mathcal{A}$, and the abelian varieties $A_L$ and $A_k$, Tate attached \emph{$p$-divisible groups}
\[
\Gcal_{\mathcal{A}}=\varinjlim_{n\in{\Z_{\geq0}}} \mathcal{A}[p^n]\hookrightarrow \mathcal{A}[p^{n+1}],
\]
and its generic and special fibres
\[
\Gcal_{A_L}=\varinjlim_{n\in{\Z_{\geq0}}} A_L[p^n]\hookrightarrow A_L[p^{n+1}]
\text{ and }
\Gcal_{A_k}=\varinjlim_{n\in{\Z_{\geq0}}} A_k[p^n]\hookrightarrow A_k[p^{n+1}].
\]
These are \emph{ind-schemes} of group schemes: its functor of points is tested against $\Oo_L$-algebras $R$ and is $\Gcal_{\mathcal{A}}(R)=\mathcal{A}(R)[p^n]$.

We denote the neutral connected components (with respect to the union of Zariski topologies) as
$\Gcal_{\mathcal{A}}^0$, $\Gcal_{A_L}^0$, and $\Gcal^0_{A_k}$. The first and third are actually the formal Lie groups attached to $\mathcal{A}$ and $A_k$. 

The corresponding quotients are well defined as $p$-divisible groups and there are short exact sequences of $p$-divisible groups:
\begin{eqnarray}
\label{G Acal exact sequence}
&0\xrightarrow{}\Gcal_{\mathcal{A}}^0\xrightarrow{\iota}\Gcal_{\mathcal{A}}\xrightarrow{\varpi}\Gcal_{\mathcal{A}}^{\etale}\xrightarrow{}0
&\text{ over }\Oo_L,\\
\label{G Ak exact sequence}
\text{ and }
&0\xrightarrow{}\Gcal_{A_k}^0\xrightarrow{\iota_k}\Gcal_{A_k}\xrightarrow{\varpi_k}\Gcal_{A_k}^{\etale}\to 0
&\text{ over }k.
\end{eqnarray}

The (co)homologies we evoked in \eqref{Cohomologies introduction} factor as functor through the ``passing to the attached $p$-divisible groups over $\Oo_L$'',
and $\CoHcrys$, resp. $\CoHet$, factors even more through ``passing to the special (resp. generic) fibre''.
We can moreover identify respectively
\[
\Hcrys(A_k)\text{ and }
\Het(A_{\Lbar};\Z_p)
\]
with the \emph{covariant Dieudonné module} $M(\Gcal_{A_k})$ of $\Gcal_{A_k}$, and its \emph{Tate module}
\[
T(\Gcal_{A_{\Lbar}})=\Hom(\Q_p/\Z_p,\Gcal_{A_L}(\Lbar)).
\]

It is important that the Dieudonné module (resp. Tate module) makes sense for any the $p$-divisible groups appearing in \eqref{G Ak exact sequence} and \eqref{G Acal exact sequence}
respectively. Even though these are not the full $p$-divisible group of an abelian variety, or of an abelian scheme resp. We still get covariant functors
\[
\mathbf{M}( )\text{ and }\mathbf{T}( )
\]
on the category of $p$-divisible groups over $k$ and $L$ respectively, to that of free $\Wittk$-modules and $\Z_p$-modules respectively.

Applying the functors $\mathbf{T}( )$ and $\mathbf{M}( )$ to \eqref{G Acal exact sequence} and \eqref{G Ak exact sequence} respectively gives us exact sequences
that we relabel
\begin{align}\label{notations T}
&0\xrightarrow{}T^0\xrightarrow{}T\xrightarrow{}T^{\etale}\xrightarrow{}0 
=&
&0\xrightarrow{}\mathbf{T}(\Gcal_{\mathcal{A}}^0)\xrightarrow{\mathbf{T}(\iota)}
	\mathbf{T}(\Gcal_{\mathcal{A}})
		\xrightarrow{\mathbf{T}(\varpi)}\mathbf{T}(\Gcal_{\mathcal{A}}^{\etale})\xrightarrow{}0
\\ \label{notations M}
\text{ and }
&0\xrightarrow{}M^0\xrightarrow{}M\xrightarrow{}M^{\etale}\xrightarrow{}0
=&
&0\xrightarrow{}\mathbf{M}(\Gcal_{A_k}^0)\xrightarrow{\mathbf{M}(\iota_k)}\mathbf{M}(\Gcal_{A_k})\xrightarrow{\mathbf{M}(\varpi_k)}\mathbf{M}(\Gcal_{A_k}^{\etale})\to 0.
\end{align}

\subsubsection*{Fact} We will use the following explicit description of the Dieudonné module of an étale $p$-divisible group $\Gcal$ over $k$.
We have, as $W(k)$-modules, a natural identification
\[
\mathbf{T}(\Gcal)\tens_{\Z_p} W(k)\simeq \mathbf{M}(\Gcal).
\]
We won't need to explicit semi-linear action making it a \emph{Dieudonné} module, but we stress that the above identification is functorial, and in
particular receives a left action of $\End(\Gcal)$.

The reduction map $\reduction$ factors through the above exact sequences, by which we mean the commutativity of the square
\[
\begin{tikzcd}
\Gcal_{\mathcal{A}}[p^\infty](\Oo_{\overline{L}})
 \arrow{d}{\reduction} 
 \arrow{r}{\varpi}
&
\Gcal_{\mathcal{A}}^{\etale}[p^\infty](\Oo_{\overline{L}}) \arrow{d}{\sim}
\\
\Gcal_{A_k}[p^\infty](\overline{k}) \arrow[equal]{r}{\varpi_k}
&
\Gcal_{A_k}^{\etale}[p^\infty](\overline{k}).
\end{tikzcd}
\]

Applying the dualising $\Hom(\Q_p/\Z_p,-)$ to pass to the Tate modules, we get equivalently
\[
\begin{tikzcd}
T \arrow[d] \arrow[r]
	 & T^{\etale} \arrow[d, "\sim"]\\
T_p({A_k}[p^\infty](\overline{k})) \arrow[equal]{r}
	& T_p({A_k}[p^\infty](\overline{k})).
\end{tikzcd}
\]
where in the bottom row is the naïve Tate module of $A_k$: its rank as a free $\Z_p$ module is the $p$-\emph{rank} of $A_k$.

In another formulation, we have the identity
\begin{equation}\label{thingy ker identity}
\ker\bigl(\mathbf{T}(\varpi):T\to T^{\etale}\bigr)=\ker\bigl(T(\reduction):T\to T_p({A_k}[p^\infty](\overline{k})) \bigr)
\end{equation}
\subsection{Unit root eigenspaces} We study the connected-étale exact sequence of Dieudonné modules:
\[
0\xrightarrow{}M^0\xrightarrow{}M\xrightarrow{}M^{\etale}\xrightarrow{}0.
\]

\subsubsection{Dieudonné-Manin semilinear eigenspaces}We recall a notion of eigenspaces for semilinear actions. 
Let $M'$ be any free Dieudonné module over $k$, namely a finite rank free module over $\Wittk$ equipped with a $\Wittp$-linear endomorphism $\phi$ which is semi-linear with respect to $\Wittk$. We have the following notion of eigenspace on the $L_0$-linear vector space $D=M'\tens_{\Wittk}L_0$. Pick any $\Wittk$ basis of $M'$
over $L_0$, the representative matrix $\Phi$ of $\phi$ applied to this basis has associated (generalised) eigenspaces in $D\tens_{L_0}\overline{L}$, and these
are defined over $L_0$ when the corresponding eigenvalue belongs to $L_0$. A change of basis of matrix $B$ will change $\Phi'=\Phi\mapsto \WittF(B)\Phi B^{-1}$, each eigenvalue obtained from the $\lambda$ by some operation
\[
\lambda\mapsto\lambda'=\WittF(c_\lambda)/c_\lambda\cdot \lambda
\]
for some invertible $c(\lambda)$ in $\Wittk \subseteq {L_0}^{\text{nr}}$. As $\WittF$ preserves the $p$-adic absolute values $\abs{ }_p:L_0\mapsto \R_{\geq 0}$, we will have the invariance
\[
\abs{\lambda}_p=\abs{\lambda'}_p.
\]
For every real number $\mu$, we let $\overline{V}_\mu$ be the sum, in $\overline{D}=D\tens_{L_0}\overline{L}$, of the generalised eigenspaces for eigenvalues $\lambda$ having valuation $\mu=-\log_p\abs{\lambda}_p$. Then $\overline{D}_\mu$ does not depend on the choice of basis, and is defined over $L_0$.

This can be approached another way. Denote $f=[k:\FF_p]$, so that $\Wittk^f$ will actually be the identity of $\Wittk$.  But the power $\phi^f$, since $\Wittk^f$-semi-linear, will actually be $\Wittk$-linear. We let $P(X)$ be its characteristic polynomial, and $P_\mu(X)$ be, in $\overline{L}[X]$, its maximal unitary factor whose roots with roots $\lambda$ have valuation $f\mu=-f\cdot\log\abs{\lambda}_p$. 
Then $P_\mu(X)$ is actually defined over $L_0$,  and the kernel $D_\mu=\ker P_\mu(\phi)$ will produce $\overline{D}_\mu=D_\mu\tens_{L_0}\Lbar$.

These ``Dieudonné modules'' $M'$ are also known as \emph{crystals}, the corresponding $D$ as \emph{iso}crystals, and the subspace $D_0$ as a \emph{unit root} isocrystal.

\subsubsection{Unit root isosplitting of étale quotient} It happens that the exact sequence 
\[
\begin{tikzcd}
    	0\arrow{r} 
	&M^0\tens_{W(k)}L_0\arrow{r}{t}
	& M\tens_{W(k)}L_0\arrow{r}{\pi} 
	& M^{\etale}\tens_{W(k)}L_0\arrow{r} \arrow[l, "s"', bend right=33]
	& 0
\end{tikzcd}
\]
is (uniquely) split as an exact sequence of ($\WittF$-semi-linear) modules, and that a (unique semi-linear) splitting is given by the unit root subspace
\[
s(D^{\etale})=D_0\text{ of }D=M\tens_{W(k)}L_0\text{, letting }D^{\etale}:=M^{\etale}\tens_{W(k)}L_0.
\]
\subsubsection{Spectral projector $\pi$ and functorial Frobenius} Let $\pi$ be the corresponding eigenspace projector $
D\to D$ 
with kernel $D_0$. It can be recovered in terms of $\phi^f$ in the following usual way. Let $K_0\subseteq L_0$ be any field containing
the coefficients of the factor $P_\mu(X)$ of the characteristic polynomial $P(X)$ of $\phi^f$. 
There is a polynomial $Q(X)\in K_0[X]$, uniquely defined up to $P(X)$, such that $Q\equiv 0\pmod{P_\mu(X)}^?$ and $Q\equiv 1\pmod{P_\phi(X)/P(X)}^?$. 
For any such polynomial, $\pi=Q(\phi^f)$ is the spectral projector with kernel $D_0$.

We now consider the $q$-th powers Frobenius endomorphism $\FrobAk\in\End(A_k)$ from \eqref{thingy frob of Ak}
of the abelian variety $A_k$ with respect to the field $k$. We let $\End(A_k)$ act on $\Gcal_{A_k}$ as well.

Applying the functor $\mathbf{M}( )$ to it turns out to give again
\begin{equation}\label{Identity of FrobAk phi}
\mathbf{M}(F_{A_k})=\phi^f\text{ on }M.
\end{equation}
We now define $\pi:=Q(\Frob_{A_k})\in \End(A_k)\tens_\Q K$.

We may extend $D:\End(A_k)\to \End_{L_0}(D(\Gcal_{A_k}))$ by $L_0$-linearity, and now define
\[
\mathbf{D}_{K_0}: \End(A_k)\tens_\Q K_0\to \End(A_k)\tens_\Q L_0\to \End_{L_0}(D(\Gcal_{A_k})) 
\]
and finally the element
\[
\mathbf{D}_{K_0}(\pi) \in \End_{L_0}(D) 
\]
is well defined, and \eqref{Identity of FrobAk phi} implies that $
\mathbf{D}_{K_0}(\pi)=Q(\phi^f)=\pi\text{ in }\End_{L_0}(D)$.

\subsubsection{Comparison Theorem for $p$-divisible groups} We now review the connected-étale sequence in our comparison by factoring, for abelian varieties, the crystalline étale comparison through the categories of $p$-divisible
groups on $\mathcal{O}_L$ and on $k$.
 
We refer to the dedicated book \cite{Fontainepdivbook} of Fontaine, and most notably Appendix B therein. A more recent exposition can be found for example in \cite[Chap. 4]{LaCourbe}. See also \cite[Sec. 6]{zbMATH01229029}. In \emph{op. cit.} Faltings describes more delicate problems about an \emph{integral comparison} and works with $B^+(\Oo_L)$, rather than $B_{\crys}$, but it is easy to see that those results imply the following. 
\begin{thm}[Comparison for $p$-divisible groups]\label{pdiv comparison}
Let $E$ be a period ring as in Theorem \ref{eqcrys2}. There exists a natural ($E$-linear) isomorphism
\begin{equation}\label{eqcrys div}
\gamma(\Gcal): E\otimes_{W(k)}\CoHcrys(\Gcal_k)\xrightarrow{\sim} E\otimes_{\Z_p} \CoHet(\Gcal_{\overline{L}},\Z_p).
\end{equation}
of functors from the category of $p$-divisible groups $\Gcal$ over $\Oo_L$
to that of vector spaces over $E$.
\end{thm}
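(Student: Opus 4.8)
The plan is to deduce this from Faltings's crystalline--étale comparison for $p$-divisible groups, whose integral form over $B^{+}(\Oo_L)$ is exactly what is recorded in \cite{Fontainepdivbook} and \cite{zbMATH01229029}, together with a harmless base change. For a $p$-divisible group $\Gcal$ over $\Oo_L$ one first \emph{defines}
\[
\CoHcrys(\Gcal_k):=\mathbf{M}(\Gcal_k)^{\vee},\qquad \CoHet(\Gcal_{\Lbar},\Z_p):=\mathbf{T}(\Gcal_{\Lbar})^{\vee},
\]
the $\Wittk$-dual of the covariant Dieudonné crystal evaluated on $\Wittk$, respectively the $\Z_p$-dual of the Tate module; both are free of rank $\operatorname{ht}(\Gcal)$ and functorial in $\Gcal$. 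For $\Gcal=\Gcal_{\mathcal{B}}$ the $p$-divisible group attached to an abelian scheme $\mathcal{B}/\Oo_L$, these recover canonically the groups $\CoHcrys(\mathcal{B}_k)$ and $\CoHet(\mathcal{B}_{\Lbar},\Z_p)$ of \eqref{Cohomologies introduction}, since (as recalled before the statement) the two cohomology functors on abelian schemes factor through ``pass to the attached $p$-divisible group''.

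First I would invoke Faltings's theorem to produce, functorially in $\Gcal/\Oo_L$, an isomorphism
\[
B^{+}(\Oo_L)\otimes_{\Wittk}\CoHcrys(\Gcal_k)\;\xrightarrow{\ \sim\ }\;B^{+}(\Oo_L)\otimes_{\Z_p}\CoHet(\Gcal_{\Lbar},\Z_p)
\]
compatible with the Galois action, the Frobenius, and the filtrations on the two sides; this is the $C_{\crys}$-comparison specialised to $H^1$ of a $p$-divisible group, and it underlies Theorem \ref{abelian comparison}. Next I would compose with the canonical structure-preserving ring homomorphism $B^{+}(\Oo_L)\to E$, where $E$ is the period ring ($B_{\text{dR}}$ or $B_{\crys}$) of Theorem \ref{abelian comparison}: tensoring the displayed isomorphism along this map yields the desired $E$-linear isomorphism $\gamma(\Gcal)$, and naturality in $\Gcal$ is inherited. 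Finally I would check the compatibility needed downstream, namely that applying the construction to $\Gcal=\Gcal_{\mathcal{B}}$ and using the identifications of the previous paragraph gives $\gamma(\Gcal_{\mathcal{B}})=\gamma_{\operatorname{crys}}(\mathcal{B})$, so that the connected--étale exact sequences of Tate and Dieudonné modules may legitimately be inserted into the proof of Proposition \ref{thingy statement}. (Alternatively one could bypass the $p$-divisible group case entirely: up to isogeny and a finite extension of $L$, every $p$-divisible group over $\Oo_L$ is a direct factor of $\Gcal_{\mathcal{B}}$ for some abelian scheme $\mathcal{B}$ --- realise $\Gcal_k$ inside an abelian variety over $k$ by Dieudonné theory and lift via Serre--Tate --- and then cut $\gamma(\Gcal)$ out of $\gamma_{\operatorname{crys}}(\mathcal{B})$.)

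The main obstacle is not mathematical depth but bookkeeping: one must reconcile Faltings's integral formulation over $B^{+}(\Oo_L)$ with the rational period ring $E$ used in Theorem \ref{abelian comparison}, verify that the comparison map of the second display is genuinely functorial (and not merely natural up to the usual twists), and keep careful track of the covariant/contravariant and dualisation conventions so that the equality $\gamma(\Gcal_{\mathcal{B}})=\gamma_{\operatorname{crys}}(\mathcal{B})$ holds on the nose rather than only up to isomorphism. Granting this, naturality and the compatibility statement are formal.
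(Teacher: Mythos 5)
Your main route---invoking Faltings's integral comparison over $B^{+}(\Oo_L)$ for $p$-divisible groups, base-changing along $B^{+}(\Oo_L)\to E$, and checking that for $\Gcal=\Gcal_{\mathcal{B}}$ one recovers $\gamma_{\operatorname{crys}}(\mathcal{B})$---is essentially the paper's own treatment, which gives no detailed proof but cites Fontaine, Fargues--Fontaine and Faltings and observes, exactly as you do, that the $B^{+}(\Oo_L)$-statements imply the $E$-linear comparison of functors. Your parenthetical Serre--Tate alternative is not used in the paper and would require extra care (e.g.\ passing to $\Gcal\times\Gcal^{\vee}$ to obtain a symmetric Newton polygon and algebraizing the formal deformation), but it is not needed for your primary argument.
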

\begin{rmk}
We actually know that $\gamma(\Gcal)$ extends the map $\gamma_{\operatorname{crys}}(A)$ from Theorem \ref{abelian comparison}.
\end{rmk}

We again use the $E$-linear dual comparisons
\[
\gamma^\vee(\Gcal):\mathbf{M}(\Gcal)\tens_{W(k)}E\xleftarrow{\sim}\mathbf{T}(\Gcal)\tens_{\Z_p} E
\]
or equivalently, with the isocristal ${D}(\Gcal)=\mathbf{M}(\Gcal)\tens_{\Wittk}L_0$ and the $\Q_p$-linear Tate module $V(\Gcal)=\mathbf{T}(\Gcal)\tens_{\Z_p}\Q_p$, with abuse of notations,
\[
\gamma^\vee(\Gcal):{D}(\Gcal)\tens_{L_0}E\xleftarrow{\sim}V(\Gcal)\tens_{\Q_p} E.
\]

\subsection{Assembling everything together}\label{finalsectionp} The following enhances Proposition \ref{thingy statement} into a more precise statement, which we are ready to prove.
\begin{prop}\label{thingy statement precise} Set $D_\pi:=\im \mathbf{D}_{K_0}(\pi)$ for the $L_0$-linear subspace of $D=\mathbf{D}(\Gcal_{A_k})$ given by the image of $\pi$.
We consider the comparison isomorphism
\[
\gamma_{\mathcal{A}}^\vee:=\gamma^\vee(\mathcal{A}):V\tens_{\Q_p} E \to D\tens_{L_0} E.
\]

\begin{enumerate}
\item Then $\gamma$ is equivariant with respect to the covariant (i.e. on the left) action of $\End(A_L)\tens_{\Z} E$ on both side given by
\begin{itemize}
\item the $\mathbf{M}( )$-functorial action of $\End(A_L)$ on the $\Z_p$-module $M$, extended $E$-linearly to $V\tens_{\Q_p} E$;
\item the composition of the reduction $\End(A_L)\xleftarrow{\sim}\End(A_L)\hookrightarrow \End(A_k)$ with
the $\mathbf{D}( )$-functorial action of $\End(A_k)$ on the $\Q_p$-linear vector space $D$, extended $E$-linearly.
\end{itemize}
\item With the notation of Proposition \ref{thingy statement}, we have the identity
\begin{equation}\label{thingy fundamental identity}
\gamma_{\mathcal{A}}^\vee (V^r\tens E)=D_c\tens_{L_0} E
\end{equation}
of $E$-linear vector subspaces of $D\tens_{L_0}E$.
\end{enumerate}
\end{prop}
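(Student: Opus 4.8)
The plan is to derive both assertions from the naturality of the crystalline--étale comparison (Theorem~\ref{pdiv comparison}), applied to the $p$-divisible group $\Gcal_{\mathcal A}$ of $\mathcal A$ and to its connected--étale sequence \eqref{G Acal exact sequence}, using throughout that $\CoHcrys$ and $\CoHet$ factor through $\mathcal A\mapsto\Gcal_{\mathcal A}$, that $\mathbf M(-)=\CoHcrys(-)$ sees only the special fibre $(\Gcal_{\mathcal A})_k=\Gcal_{A_k}$, and that $\mathbf T(-)=\CoHet(-)$ sees only the generic fibre. For part~(1): since $A_L$ has good reduction, $\mathcal A$ is its Néron model and $\End(A_L)=\End(\mathcal A)$; post-composing with the reduction homomorphism gives $\End(\mathcal A)\to\End(A_k)$. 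An endomorphism $f\in\End(\mathcal A)$ acts on $\Gcal_{\mathcal A}/\Oo_L$; applying $\mathbf T(-)$ recovers the natural covariant action of $f$ on $\mathbf T(\Gcal_{\mathcal A})=\Het(A_{\Lbar})$, whereas applying $\mathbf M(-)$ — which only sees $\Gcal_{A_k}$, where $f$ acts through its reduction $f_k$ — recovers the $\mathbf M$-functorial action of $f_k\in\End(A_k)$ on $\mathbf M(\Gcal_{A_k})=\Hcrys(A_k)$. The natural isomorphism $\gamma^\vee(\Gcal)$, evaluated at the endomorphism $f$ of $\Gcal_{\mathcal A}$, intertwines these two actions; extending $E$-linearly yields the equivariance of $\gamma^\vee_{\mathcal A}\colon V\tens_{\Q_p}E\to D\tens_{L_0}E$ with respect to $\End(A_L)\tens_{\Z}E$. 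I expect this step to be essentially formal.

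For part~(2) the plan is to push $\gamma^\vee$ through the connected--étale filtration. First I would note that forming connected components is compatible with passage to the special fibre, so the special fibre of \eqref{G Acal exact sequence} is \eqref{G Ak exact sequence}; in particular $(\Gcal_{\mathcal A}^0)_k=\Gcal_{A_k}^0$ and $(\Gcal_{\mathcal A}^{\etale})_k=\Gcal_{A_k}^{\etale}$. Since $\mathbf T(-)$ and $\mathbf M(-)$ are exact on $p$-divisible groups (this is what produces \eqref{notations T}--\eqref{notations M}) and $\gamma^\vee$, after $\tens E$, is a natural isomorphism between them, applying it to \eqref{G Acal exact sequence} gives a commutative diagram of short exact sequences of $E$-modules whose three vertical arrows are $\gamma^\vee(\Gcal_{\mathcal A}^0)$, $\gamma^\vee(\mathcal A)$, $\gamma^\vee(\Gcal_{\mathcal A}^{\etale})$, all isomorphisms. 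Hence $\gamma^\vee_{\mathcal A}$ carries $V^0\tens_{\Q_p}E$ isomorphically onto $D^0\tens_{L_0}E$, where, in the notation of \eqref{notations T}--\eqref{notations M}, $V^0:=T^0\tens_{\Z_p}\Q_p$ and $D^0:=M^0\tens_{W(k)}L_0$.

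It then remains to identify $V^0\tens E=V^r\tens E$ and $D^0\tens E=D_\pi\tens E$. The first is exactly \eqref{thingy ker identity}: $V^0=\ker(\mathbf T(\varpi))\tens\Q_p=\ker(T(\reduction))\tens\Q_p=V^r$. For the second, the unit-root subspace $D_0=s(D^{\etale})$ splits $0\to D^0\to D\to D^{\etale}\to0$, so $D=D^0\oplus D_0$ and the spectral projector $\pi$ (with $\ker\pi=D_0$) has image exactly $D^0$; since $\mathbf M(\FrobAk)=\phi^f$ by \eqref{Identity of FrobAk phi} and $\pi=Q(\phi^f)$, one has $\mathbf D_{K_0}(\pi)=Q(\phi^f)=\pi$ in $\End_{L_0}(D)$, whence $D_\pi=\im\mathbf D_{K_0}(\pi)=D^0$. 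Combining, $\gamma^\vee_{\mathcal A}(V^r\tens E)=\gamma^\vee_{\mathcal A}(V^0\tens E)=D^0\tens_{L_0}E=D_\pi\tens_{L_0}E$, which is the identity \eqref{thingy fundamental identity} (so that $D_c:=D_\pi$).

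The main obstacle, and where I would spend the most care, is the bookkeeping around the comparison functor underlying the diagram in part~(2): one must verify that $\CoHcrys$ and $\CoHet$ (equivalently $\mathbf M$ and $\mathbf T$) carry short exact sequences of $p$-divisible groups to short exact sequences, that $\gamma$ is compatible with these — an isomorphism of functors commuting with the connecting maps, not merely an isomorphism on objects — and that passage to the special fibre really does turn \eqref{G Acal exact sequence} into \eqref{G Ak exact sequence}. These facts are standard and are implicit in the cited work of Fontaine and Faltings, but they are precisely what legitimises the diagram argument. A smaller point to be recorded is the descent of the slope-$0$ factor of the characteristic polynomial of $\phi^f$ to $K_0\subseteq L_0$, so that $\pi=Q(\phi^f)$, $D_\pi$, and the statement \eqref{thingy fundamental identity} are meaningful over $L_0$; this is the classical Dieudonné--Manin descent for isocrystals over $\operatorname{Frac}(W(k))$.
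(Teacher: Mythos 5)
Your overall route is the paper's: part (1) is the naturality of $\gamma^\vee$ in $\mathcal{A}$, and part (2) is obtained by applying the $p$-divisible group comparison to the connected--étale sequence \eqref{G Acal exact sequence}, identifying the top kernel with $V^r$ through \eqref{thingy ker identity} and the bottom kernel with $D^0:=M^0\tens_{\Wittk}L_0$ (and you correctly read the ``$D_c$'' in \eqref{thingy fundamental identity} as $D_\pi$). The one step where your argument, as written, does not follow is the assertion that, because $D_0=s(D^{\etale})$ splits the sequence, the spectral projector $\pi$ with $\ker\pi=D_0$ ``has image exactly $D^0$''. Knowing $D=D^0\oplus D_0$ and $\ker\pi=D_0$ only says that $\im\pi$ and $D^0$ are two complements of the same subspace $D_0$, and distinct complements need not coincide; this identification is precisely where the paper's proof spends its effort. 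There, one shows that $D_\pi$ maps to $0$ in $D^{\etale}$, using that $D\to D^{\etale}$ is $\FrobAk$-equivariant, that the unit-root lift $D_0\to D^{\etale}$ is an equivariant isomorphism, and that $\pi=Q(\phi^f)$ vanishes on $D_0$ (equivalently, as in the paper's closing remark, the Frobenius eigenvalues on the étale part are $p$-adic units, so $Q$ kills $D^{\etale}$); then $D_\pi\subseteq\ker(D\to D^{\etale})=D^0$, and equality follows by exactness and dimension count.

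The gap is genuine but small, and it can be closed with facts already in your hands: $D^0$ is the image of the sub-Dieudonné module $M^0$, hence $\phi^f$-stable; since it meets the unit-root subspace $D_0$ trivially (the image of a section meets the kernel of the projection trivially), $\phi^f|_{D^0}$ has no unit eigenvalues, so $D^0$ lies in the sum of the non-unit generalised eigenspaces, i.e.\ $D^0\subseteq\im\pi$, and equality holds because both are complements of $D_0$. With that line added (or with the paper's equivariance argument substituted), your proof is complete and otherwise coincides with the paper's.
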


\begin{proof} The first part is only an explicit formulation of the functoriality in $\mathcal{A}$ of the comparison isomorphisms,
that is that $\gamma^\vee$ is a morphism of \emph{functors}. This requires only the abelian varieties comparison recalled as Theorem \ref{abelian comparison}.

For $e\in\End(\mathcal{A})$ we apply $\gamma^\vee$ to $e$ to get the commutativity of

\[
\begin{tikzcd}
\phantom{D}\arrow{d}{\gamma^\vee(e)}
     & V\tens_{\Q_p} E\arrow{r}{\gamma^\vee(\Acal)}\arrow{d}{V(e)}& D\tens_{L_0} E\arrow{d}{D(e)}\\
\phantom{D}
	& V\tens_{\Q_p} E\arrow{r}{\gamma^\vee(\Acal)}& D\tens_{L_0} E.
\end{tikzcd}
\]

The second part involves the connected-étale sequence, $p$-divisible groups and the rest of the theory.
We apply $\gamma^\vee$
to the exact sequence 
\[
\begin{tikzcd}
    0\arrow{r} 
	& \Gcal^0_{\mathcal{A}} \arrow{r}{\iota}
	& \Gcal_{\mathcal{A}} \arrow{r}{\varpi}
	& \Gcal^{\etale}_{\mathcal{A}}\arrow{r}
	& 0
\end{tikzcd},
\]
 and get the isomorphism of exact sequences, in notations \eqref{notations T} and \eqref{notations M},
\[
\begin{tikzcd}
    0\arrow{r} 
	& T^0\tens_{\Z_p} E \arrow{r}{\mathbf{T}(\iota)\tens E}\arrow{d}{\gamma^\vee(\Gcal^0_{\mathcal{A}})}
	& T\tens_{\Z_p} E\arrow{r}{\mathbf{T}(\varpi)\tens E}\arrow{d}{\gamma^\vee(\Gcal_{\mathcal{A}})} 
	& T^{\etale}\tens_{\Z_p} E\arrow{r}\arrow{d}{\gamma^\vee(\mathcal{\Gcal}_{\mathcal{A}}^{\etale})} 
	& 0 \\
    0\arrow{r} 
	& M^0\tens_{W(k)} E\arrow{r}{\mathbf{M}(\iota)\tens E}
	& M\tens_{W(k)} E\arrow{r}{\mathbf{M}(\varpi)\tens E} 
	& M^{\etale}\tens_{W(k)} E\arrow{r}
	& 0.
\end{tikzcd}
\]
In particular the kernel of $V(\iota)\tens E$ is sent to the kernel of $D(\iota)\tens E$. We have to check that
the equality $\gamma(\ker V(\varpi)\tens E)=\ker(D(\varpi)\tens E)$ \emph{is} the equality \eqref{thingy fundamental identity} we seek.

We can already determinate the upper kernel before passing to $E$ from \eqref{thingy ker identity} via identity of sequences:
\[
\begin{tikzcd}
    0\arrow{r} 
	& T(\Gcal^0_{\mathcal{A}}) \arrow{r}{T(\iota)}\arrow[equal]{d}
	& T(\Gcal_{\mathcal{A}}) \arrow{r}{T(\text{red})}\arrow[equal]{d} 
	& T(\Gcal^{\etale}_{\mathcal{A}})\arrow{r}\arrow{d}{\sim} 
	& 0 \\
    0\arrow{r} 
	& T^0\arrow{r}
	& T(A_L)\arrow{r} {T(\varpi)}
	& T^{\etale}\arrow{r}
	& 0.
\end{tikzcd}
\]
Thanks to \eqref{thingy ker identity}, we can identify $T^0$ with $\ker(T(\reduction))$.

Likewise, we determinate, the lower kernel before passing from $\Wittk$ to $E$, from the sequence
\[
\begin{tikzcd}
    0\arrow{r} 
	& M^0\arrow{r}{\mathbf{\iota}}
	& M \arrow{r}{\mathbf{\varpi}}
	& M^{\etale}\arrow{r}
	& 0.
\end{tikzcd}
\]
We note that $\End(\Gcal_{k})$ acts $\Z_p$-linearly on this sequence: it acts on each term and the maps are functorial.
The kernel of idempotent (to prove) $\mathbf{D}_{K_0}(\pi)$ is the unit root lift $D_0\subseteq D$ of $D^{\etale}$.
We want to check that its image $D_\pi=\mathbf{D}_{K_0}(\pi)(D)$ is indeed the image of $D^0:=M^0\tens_{\Wittk}L_0$. 
By exactness and dimensionality, it amounts to show that the image of $D_\pi$ in $D^{\etale}$ is $\{0\}$.

The map $\Gcal_{A_k}\to\Gcal^{\etale}_{A_k}$ is $\End(A_k)$-equivariant: namely, the connected component of $\Gcal_{A_k}$ is invariant under $\End(A_k)$.
By $\mathbf{M}( )$ funtoriality so is the map $M\to M^{\etale}$, and equivariance extends $L_0$-linearly to an action of $\End(A_k)\tens_\Q L_0$ on $D\to\ D^{\etale}$.

We asserted that the unit root $D_0\subseteq D$ is a linear lift of $D\to D^{\etale}$. By construction, $D_0$ is invariant under $\mathbf{M}(\FrobAk)$.
It follows the restriction map $D_0\to D^{\etale}$ is $L_0[\FrobAk]$-equivariant. Since $\mathbf{D}_{K_0}(\pi)=Q(\mathbf{M}(\FrobAk))$ will
be $0$ on $D_0$ hence, by equivariance, on $D^{\etale}$. This ends the proof of Proposition \ref{thingy statement precise}.
\end{proof}


\bibliographystyle{abbrv}
\bibliography{biblio.bib}

\Addresses

\end{document}